\documentclass[a4paper,11pt,leqno]{article}

\usepackage[latin1]{inputenc}
\usepackage[T1]{fontenc} 
\usepackage[english]{babel}
\usepackage{verbatim}
\usepackage{amsfonts}
\usepackage{amsmath}
\usepackage{amsthm}
\usepackage{amssymb}
\usepackage{dsfont}
\usepackage{color}
\usepackage{graphicx}
\usepackage{bm}

\theoremstyle{definition}
\newtheorem{defin}{Definition}[section]

\newtheorem{rem}[defin]{Remark}

\theoremstyle{plane}
\newtheorem{thm}[defin]{Theorem}
\newtheorem{prop}[defin]{Proposition}
\newtheorem{coroll}[defin]{Corollary}
\newtheorem{lemma}[defin]{Lemma}

\newcommand{\mbb}{\mathbb}

\newcommand{\mc}{\mathcal}
\newcommand{\mf}{\mathfrak}
\newcommand{\veps}{\varepsilon}
\newcommand{\what}{\widehat}
\newcommand{\wtilde}{\widetilde}
\newcommand{\vphi}{\varphi}
\newcommand{\oline}{\overline}

\newcommand{\ra}{\rightarrow}

\newcommand{\hra}{\hookrightarrow}

\newcommand{\g}{\gamma}
\newcommand{\lan}{\langle}
\newcommand{\ran}{\rangle}

\newcommand{\R}{\mathbb{R}}

\newcommand{\N}{\mathbb{N}}
\newcommand{\Z}{\mathbb{Z}}

\renewcommand{\div}{{\rm div}\,}

\newcommand{\Id}{{\rm Id}\,}

\allowdisplaybreaks

\def\d{\partial}
\def\div{{\rm div}\,}

\textheight = 700pt
\textwidth = 460pt
\voffset = -70pt
\hoffset = -40pt


\title{\Large{\textbf{\textsc{A singular limit problem for rotating capillary fluids with variable rotation axis}}}}

\author{\textsl{Francesco Fanelli}$\,$\footnote{Present address: \textit{Institut Camille Jordan, UMR CNRS 5208}, \textsc{Universit\'e Claude Bernard - Lyon 1};
B\^atiment Braconnier; 48, Boulevard du 11 novembre 1918 -- 69622 Villeurbanne cedex - FRANCE; email: \texttt{fanelli@math.univ-lyon1.fr}.} \vspace{.2cm} \\
{\small \textit{Centro di Ricerca Matematica ``Ennio De Giorgi''}} \vspace{.1cm} \\
{\small \textsc{Scuola Normale Superiore}} \vspace{.2cm} \\
{\small {Collegio Puteano}} \\
{\small {Piazza dei Cavalieri, 3}} \\
{\small {I-56126 -- Pisa, ITALY}} \vspace{.3cm} \\
{\small \ttfamily{francesco.fanelli@sns.it}} }

\vspace{.3cm}
\date\today

\begin{document}

\maketitle

\subsubsection*{Abstract}
{\small In the present paper we study a singular perturbation problem for a Navier-Stokes-Korteweg model with Coriolis force.
Namely, we perform the incompressible and fast rotation asymptotics simultaneously, while we keep the capillarity coefficient constant in
order to capture surface tension effects in the limit.

We consider here the case of variable rotation axis: we prove the convergence to a linear parabolic-type equation with variable coefficients.
The proof of the result relies on compensated compactness arguments.

Besides, we look for minimal regularity assumptions on the variations of the axis.}

\paragraph*{2010 Mathematics Subject Classification:}{\small 35Q35 
(primary); 35B25, 
35B40, 
76U05 
(secondary).}

\paragraph*{Keywords:}{\small Navier-Stokes-Korteweg system; singular perturbation problem; low Mach, Rossby and Weber numbers;
variable rotation axis; admissible modulus of continuity.}

\section{Introduction} \label{s:intro}

In the present paper we continue the study, started in \cite{F}, of singular perturbation problems for viscous capillary
fluids under the action of fast rotation of the Earth.

Denoting by $\rho\geq0$ the density of the fluid and by $u\in\R^3$ its velocity field, the mathematical model is given by the
Navier-Stokes-Korteweg system with Coriolis force
$$
\begin{cases}
{\rm St}\,\d_t\rho+\div\left(\rho u\right)\,=\,0 \\[1ex]
{\rm St}\,\d_t\!\left(\rho u\right)+\div\bigl(\rho u\otimes u\bigr)+
\dfrac{1}{{\rm Fr}^2}\nabla\Pi(\rho)-\dfrac{\nu}{{\rm Re}}\div\bigl(\rho Du\bigr)-
\dfrac{\kappa}{{\rm We}}\rho\nabla\Delta\rho+\dfrac{\mf{C}(\rho,u)}{{\rm Ro}}\,=\,0\,,
\end{cases} \leqno(N\!S\!K)
$$
where the positive real numbers $\nu$ and $\kappa$ are respectively the viscosity and capillarity coefficients,
$Du$ is the viscous stress tensor and $\rho\nabla\Delta\rho$ is the surface tension tensor; finally,
$\Pi$ represents the pressure of the fluid, which is supposed to be a smooth function of the  density only.

In system $(N\!S\!K)$, the positive parameters ${\rm St}$, ${\rm Fr}$, ${\rm Re}$, ${\rm We}$ and ${\rm Ro}$ denote respectively the
Strouhal, Froude, Reynolds, Weber and Rossby numbers (see e.g. \cite{B-D-Met}, \cite{F-N}); we identify the Froude
and the Mach numbers.
The term $\mf{C}(\rho,u)=\mf{c}\,e^3\times\rho u$ represents the Coriolis operator,
where $\mf{c}$ is a suitably smooth function (we refer to Section \ref{s:results} below for the precise assumptions).

There are two important features to point out in the mathematical theory of equations $(N\!S\!K)$. First of all, the capillarity term 
in the momentum equation gives additional bounds for higher order derivatives of the density. Such a property shows up not just in
the classical energy inequality, but also through the so called \emph{BD entropy conservation}, a second energy inequality first discovered
in \cite{B-D_2004} by Bresch and Desjardins (see also \cite{B-D-L}) for our system, and then generalized by the same authors to different
models: see e.g. \cite{B-D_2003}, \cite{B-D_2007}. It turns out that the BD entropy structure is a fundamental ingredient in the theory
of compressible fluids with density-dependent viscosity coefficients: for instance, we quote here works \cite{B-G-Zat},
\cite{Kit-Laur-Niet}, \cite{J}, \cite{Mel-V}, \cite{Mu-Po-Za}.

On the other hand, one has to remark that the viscosity term is degenerate in regions of vacuum, where one loses then any information
on the velocity field and its gradient. For this reason, for system $(N\!S\!K)$ supplemented with a classical barotropic
pressure law (this hypothesis was however a bit relaxed), in \cite{B-D-L} Bresch, Desjardins and Lin proved the global in time
existence of a modified notion of weak solutions. Namely, under these assumptions stability can be obtained only in non-void regions
of the physical space, so that one has to test the equations only against functions supported in zones where the density does not vanish.
This was achieved by (formally) using $\rho\psi$ as a new test function, with $\psi\in\mc{D}_{t,x}$ and $\rho$ the density
itself, and passing to the limit in the new ``weak'' formulation.

However, it turns out that stability can be recovered in presence of additional terms in the momentum equation:
for instance, some drag forces (like in \cite{B-D_2003} by Bresch and Desjardins for a $2$-D shallow water model),
or a ``cold component'' in the pressure law (like in \cite{B-D_2007} by the same authors, where variations of the temperature are taken
into account as well); alternatively, it is possible to impose some additional integrability conditions on the initial velocity, as done in
\cite{Mel-V} by Mellet and Vasseur.
We refer to paper \cite{B-D-Zat} 
for a complete and interesting discussion on this subjet, as well as for some recent develpments (see also \cite{V-Yu}).

Differently to what done in \cite{F}, where the weak formulation of \cite{B-D-L} was used, we suppose here that the pressure term $\Pi$
is given by the sum of two components, a classical one $P(\rho)=\rho^\g$ and a singular one $P_c(\rho)=-\rho^{-\g_c}$,
for two suitable parameters $\g>0$, $\g_c>0$. Therefore (thanks to the results of \cite{B-D-Zat}, under much more general assumptions
than the ones considered here) one recovers existence of global in time weak solutions in the classical sense; let us point out that,
alternatively, we might have added some drag terms to our equations (as done in \cite{B-D_2003}), without substantially changing the
subsequent analysis.
The term $P_c$ is often referred to as \emph{cold pressure} (see e.g. \cite{B-D_2007}), because it is associated with the zero Kelvin
isothermal curve for heat conducting fluids; also, singular pressures naturally appears when e.g. Van der Waals type laws are considered.
In fact, for densities and temperatures close to $0$ the properties of the medium drastically change, damaging the validity
of the equations of motion: the presence of $P_c$ may be seen as a way of preserving stability of the model. At the mathematical
level, this term gives a control for negative powers of the density, which can be used to deduce  integrability properties on the velocity
field. We refer to Subsection 3.1 of \cite{B-D_2007} for more details and some physical insights. Let us also recall other works involving
singular pressure laws: for instance, \cite{Mu-Po-Za} for mixture of fluids with chemical reactions, \cite{Kit-Laur-Niet} for some
lubrication models in one space dimension and the above mentioned work \cite{B-D-Zat} for compressible Navier-Stokes equations.

In the sequel, we also assume ${\rm St}={\rm Re}=1$ in system $(N\!S\!K)$, and we set $\kappa=1$ for convenience.
Moreover, for $\veps\in\,]0,1]$ we take ${\rm Fr}={\rm Ro}=\veps$ and ${\rm We}=\veps^{2(1-\alpha)}$, where $\alpha\in[0,1]$:
we end up with the system
\begin{equation} \label{i_eq:NSK}
\begin{cases}
\d_t\rho+\div\left(\rho u\right)\,=\,0 \\[1ex]
\d_t\left(\rho u\right)+\div\bigl(\rho u\otimes u\bigr)+\dfrac{1}{\veps^2}\,\nabla\Pi(\rho)+
\dfrac{1}{\veps}\,\mf{C}(\rho,u)-\nu\div\bigl(\rho Du\bigr)-
\dfrac{1}{\veps^{2(1-\alpha)}}\,\rho \nabla\Delta\rho\,=\,0\,.
\end{cases}
\end{equation}
We are interested in studying the asymptotic behavior of a family of weak solutions $\bigl(\rho_\veps,u_\veps\bigr)_\veps$ for $\veps\ra0$.
This means that we are performing the incompressible and fast rotation limits simultaneously, combining them with
effects coming either from vanishing capillarity (for $0<\alpha\leq1$) or constant capillarity (i.e. $\alpha=0$).

The mathematical study of fluids in fast rotation has now a quite long history, which goes back to the pioneering paper \cite{B-M-N}
by Babin, Mahalov and Nicolaenko for the incompressible Navier-Stokes equations. We refer to book \cite{C-D-G-G} and the references
therein for a complete analysis of the problem for incompressible viscous fluids; in the matter of this, we also quote here
papaer \cite{B-D-GV_2004} and \cite{G-SR_2006}.
Let us just point out an important aspect in the theory of rotating fluids (see e.g. \cite{C-D-G-G}): the strong Coriolis force has a
stabilazing effect on the motion. Namely, in the limit $\veps\ra0$, the dynamics becomes constant in the direction parallel to the
rotation axis: the fluid moves along vertical coloumns (the so called ``Taylor-Proudman coloumns'') and the flow is purely $2$-dimensional,
evolving on a plane orthogonal to the rotation axis. We refer also to book \cite{Ped} for some physical background about this problem.

In the compressible case, various models have been considered, for which the choice of the scaling ${\rm Fr}={\rm Ro}=\veps$ is usually
assumed. The classical barotropic Navier-Stokes equations with constant viscosity coefficient (not depending on the density, then)
were studied in \cite{F-G-N} by Feireisl, Gallagher and Novotn\'y, and in \cite{F-G-GV-N} by the same authors in collaboration
with G\'erard-Varet. For general ill-prepared initial data, in the former article it was proved the convergence of the model to a $2$-D
quasi-geostrophic equation, by resorting to the spectral analysis of the (constant coefficient) singular perturbation operator.
In the latter work, instead, still for ill-prepared data, the limit system was identified as a linear equation, due to
the presence of the centrifugal force having the same scaling as the other quantities.
Also, in \cite{F-G-GV-N} the singular perturbation operator is no more constant coefficients, and the
authors had to resort to compensated compactness arguments in order to pass to the limit in the non-linear terms. This method
was firstly introduced by P.-L. Lions and Masmoudi (see \cite{L-M}-\cite{L-M_CRAS}) in dealing with incompressible limit problems,
and borrowed in \cite{G-SR_2006} by Gallagher and Saint-Raymond in the context of rotating fluids.

Concerning systems having a similar structure as \eqref{i_eq:NSK}, in the above mentioned paper \cite{B-D_2003} Bresch and Desjardins
studied the problem for a $2$-D viscous shallow-water model with density dependent viscosities (we refer also to \cite{G-SR_Mem} and
\cite{G_2008} for a similar system, but with constant viscosity coefficient) and with additional laminar and turbolent friction terms;
also, they assumed a vanishing capillarity regime. In \cite{J-L-W}, instead, J\"ungel, Lin and Wu considered more general viscosity
and capillarity tensors, working in a strong solutions framework, still in space dimension $2$ and in the vanishing capillarity
regime. In both works \cite{B-D_2003} and \cite{J-L-W}, for well-prepared initial data, the authors recovered convergence to a
quasi-geostrophic equation, by use of the modulated energy method.

Coming back to the somehow simpler form \eqref{i_eq:NSK}, in \cite{F} we studied the problem in the $3$-D domain $\Omega=\R^2\times\,]0,1[\,$
and for general ill-prepared initial data: the improvement was due to the use of spectral analysis tools (namely RAGE Theorem
and microlocal symmetrization arguments), as done in \cite{F-G-N}. We payed attention both to the vanishing and constant capillarity cases:
in the former instance we recovered the asymptotic result of \cite{B-D_2003} and \cite{J-L-W}, while in the latter we found the
convergence to a slightly modified $2$-D quasi-geostrophic equation: since surface tension effects do not
vanish in the limit, then they come into play also in the final relation.

\medbreak
In the present paper we want to continue the previous study, mainly focusing on the case of effectively variable rotation axis
depending just on the ``horizontal variables'',
in analogy to what done in \cite{G-SR_2006} by Gallagher and Saint-Raymond for the homogeneous incompressible Navier-Stokes system.
Indeed, when $\mf{c}\equiv1$ (so that $\mf{C}(\rho,u)=e^3\times\rho u$) the analysis of \cite{F} still applies: the presence of
the cold pressure is important just for stability of weak solutions, but it does not affect the singular pertubation problem
(actually, it simplifies things, since it supplies informations for $u$ and its gradient).
We will come back in a while to the hypotheses for the function $\mf{c}$. For the moment, let us point out that we restrict our
attention to the constant capillarity regime, corresponding to $\alpha=0$: we are interested here in capturing surface tension effects
in the limit; on the other hand, different choices of $\alpha$ can be treated in a similar way.

For non-constant rotation axis, the singular perturbation operator becomes variable coefficients, so that spectral analysis tools
are no more available. Resorting to the techniques of \cite{G-SR_2006}, also used in \cite{F-G-GV-N} for non-constant density profiles,
the idea is hence to apply compensated compactness arguments to prove the convergence in the non-linear terms: more precisely,
after a regularization procedure and integration by parts, we take advantage of the structure of our system to find
special algebraic cancellations and relations, which enable us to pass to the limit. The main novelty here is the presence of an additional
non-linear term, due to capillarity: the BD entropy structure gives compactness in space for
the density, but we miss uniform informations in time, so that we cannot pass
directly to the limit in it. Nonetheless, it turns out that this item exactly cancels out with another one, coming from
the analysis of the convective term. Notice also that the regularization process itself presents some complications with respect
to \cite{G-SR_2006} and \cite{F-G-GV-N}, because we have here less available controls for the velocity field and its gradient.
In the end, we can prove the convergence to a variable coefficients linear equation (of parabolic type) for the limit density,
which can be seen as a sort of stream function for the limit velocity field.

As it was already the case in \cite{G-SR_2006}, the previous arguments work under high regularity assumptions on the variation $\mf{c}$
of the axis. First of all, a control on the gradient of $\mf{c}$ is needed in the computations, in order to use the vorticity equation and
to decompose the horizontal part of the (approximated) velocity field into the basis given by $\nabla\mf{c}$ and its orthogonal
$\nabla^\perp\mf{c}$ (recall that $\mf{c}$ just depends on the horizontal variables). Hence, having $\mf{c}\in W^{1,\infty}$ seems to be
a necessary hypothesis, at least for this strategy to work; besides, we will also need to assume that $\nabla\mf{c}$ has non-degenerate
critical points, in a precise sense (as already done in \cite{G-SR_2006}).

But this is not all: the regularization procedure creates
some remainder terms (essentially, commutators between a smoothing operator and the variable coefficient), which we need to be small
together with their gradient in order to be able to exploit the vorticity formulation. This requirement asks for additional regularity
on $\nabla\mf{c}$: in the present paper we look for minimal smoothness assumptions on it in order to recover the convergence.
More precisely, we show that it is sufficient for $\nabla\mf{c}$ to be $\mu$-continuous, for some admissible modulus of continuity $\mu$.
The proof of this result relies just on fine commutator estimates, which can be obtained, nonetheless, in a quite classical way.

\medbreak
To conclude, we give an overview of the paper.
In the next section we set up the problem, formulating our working hypotheses, and we state the main results.
Section \ref{s:bounds} is devoted to establish suitable a priori estimates, in the general case of system $(N\!S\!K)$; besides, this
analysis allows to justify a technical requirement in \cite{F} for the vanishing capillarity regime.
In Section \ref{s:singular} we prove our main result, about the singular limit problem in the case of variable rotation axis, when
assumptions are made on the first variation of $\nabla\mf{c}$.
Finally, in the Appendix we recall some basic notions from Littlewood-Paley theory (Appendix \ref{app:LP}) and about
admissible moduli of continuity (Appendix \ref{app:continuity}); we postpone the proof of some technical results for the BD entropy
in Appendix \ref{app:BD}.


\paragraph{Notations.}
Let us introduce some notations here.

We will decompose $x\in\Omega\,:=\,\R^2\times\,]0,1[\,$ into $x=(x^h,x^3)$, with $x^h\in\R^2$ denoting its horizontal component. Analogously,
for a vector-field $v=(v^1,v^2,v^3)\in\R^3$ we set $v^h=(v^1,v^2)$, and we define the differential operators
$\nabla_h$ and $\div_{\!h}$ as the usual operators, but acting just with respect to $x^h$.
Finally, we define the operator $\nabla^\perp_h\,:=\,\bigl(-\d_2\,,\,\d_1\bigr)$.

Moreover, since we will reconduct ourselves to a periodic problem in the $x^3$-variable (see Remark \ref{r:period-bc} below),
we also introduce the following  decomposition: for a vector-field $X$, we write
\begin{equation} \label{dec:vert-av}
X(x)\,=\,\langle X\rangle(x^h)\,+\,\wtilde{X}(x)\,,\qquad\qquad\mbox{ where }\qquad
\langle X\rangle(x^h)\,:=\,\int_{\mbb{T}}X(x^h,x^3)\,dx^3\,.
\end{equation}
Notice that $\wtilde{X}$ has zero vertical average, and therefore we can write $\wtilde{X}(x)\,=\,\d_3\wtilde{Z}(x)$,
with $\wtilde{Z}$ having zero vertical average as well.
We also set $\wtilde{Z}\,=\,\mc{I}(\wtilde{X})\,=\,\d_3^{-1}\wtilde{X}$.

\subsubsection*{Acknowledgements}

The author wishes to express his gratitude to D. Bresch, for very interesting discussions and explanations about cold pressure laws
and the theory of compressible fluids with density-dependent viscosities, and for suggesting him to consider the problem of
regularity of the rotation axis. He is also deeply grateful to I. Gallagher for fruitful discussions on fluids in fast rotation
and valuable remarks on this work.

The author sincerely thanks the anonymous referee for his/her corrections and comments, which helped to improve the quality and clarity of the paper.

The author is member of the Gruppo Nazionale per l'Analisi Matematica, la Probabilit\`a
e le loro Applicazioni (GNAMPA) of the Istituto Nazionale di Alta Matematica (INdAM).

\section{Basic definitions and results} \label{s:results}
We describe here our main hypotheses and results.

\subsection{General setting}
Let us consider the rescaled Navier-Stokes-Korteweg system
\begin{equation} \label{eq:NSK}
\begin{cases}
\d_t\rho\,+\,\div\left(\rho u\right)\,=\,0 \\[1ex]
\d_t\left(\rho u\right)\,+\,\div\bigl(\rho u\otimes u\bigr)\,+\,
\dfrac{1}{{\rm Fr}^2}\,\nabla\Pi(\rho)\,-\,\nu\,\div\bigl(\rho Du\bigr)\,-\,
\dfrac{1}{{\rm We}}\,\rho\nabla\Delta\rho\,+\,\dfrac{1}{{\rm Ro}}\,\mf{C}(\rho,u)\,=\,0\,
\end{cases}
\end{equation}
in $\R_+\times\Omega$, where $\Omega$ is the infinite slab
$$
\Omega\,:=\,\R^2\,\times\,\,]0,1[\,.
$$
In the previous system, the scalar function $\rho\geq0$ represents the density of the fluid, while $u\in\R^3$ its velocity field.
The quantity $\nu$, which we will assume always to be a fixed positive constant, is the viscosity coefficient and
the operator $D$ denotes the viscous stress tensor, defined by
$$
Du\,:=\,\frac{1}{2}\,\bigl(\nabla u\,+\,^t\nabla u\bigr)\,.
$$
Moreover, we denoted by $\mf{C}$ the Coriolis operator, which takes into account the Earth rotation: here, we suppose that
$\mf C$ is given by
\begin{equation} \label{eq:coriolis}
\mf{C}(\rho,u)\,:=\,\mf{c}(x^h)\,e^3\,\times\,\rho\,u\,,
\end{equation}
where $e^3=(0,0,1)$ is the unit vector directed along the $x^3$-coordinate and $\mf{c}$ is a smooth scalar
function of the horizontal variables only. Finally, the function $\Pi(\rho)$ represents the pressure
of the fluid: for the reasons we explained in the introduction, we suppose here $\Pi\,=\,P\,+\,P_c$, where $P$ is the classical
pressure, given by the Boyle law
\begin{equation} \label{eq:def_P}
P(\rho)\,:=\,\frac{1}{2\g}\,\rho^\g\,,\qquad\qquad\mbox{ for some }\quad1\,<\,\g\,\leq\,2\,,
\end{equation}
and the second term is the \emph{cold pressure} component, for which we take the power law
\begin{equation} \label{eq:def_cold}
P_c(\rho)\,:=\,-\,\frac{1}{2\g_c}\,\rho^{-\g_c}\,,\qquad\qquad\mbox{ with }\qquad 1\,\leq\,\g_c\,\leq\,2\,.
\end{equation}
The presence of the $1/2$ is just a normalization in order to have $\Pi'(1)=1$: this fact simplifies some computations in the sequel.

In view of the analysis of the singular perturbation problem, we supplement system \eqref{eq:NSK} by complete slip boundary conditions,
in order to avoid the appearing of boundary layers effects.
Namely, if we denote by $n$ the unitary outward normal to the boundary $\d\Omega$ of the domain (simply,
$\d\Omega=\{x^3=0\}\cup\{x^3=1\}$), we impose
\begin{equation} \label{eq:bc}
\left(u\cdot n\right)_{|\d\Omega}\,=\,u^3_{|\d\Omega}\,=\,0\,,\qquad
\left(\nabla\rho\cdot n\right)_{|\d\Omega}\,=\,\d_3\rho_{|\d\Omega}\,=\,0\,,\qquad
\bigl((Du)n\times n\bigr)_{|\d\Omega}\,=\,0\,.
\end{equation}

\begin{rem} \label{r:period-bc}
Equations \eqref{eq:NSK}, supplemented by complete slip boundary boundary conditions,
can be recasted as a periodic problem with respect to the vertical variable, in the new domain
$$
\Omega\,=\,\R^2\,\times\,\mbb{T}^1\,,\qquad\qquad\mbox{ with }\qquad\mbb{T}^1\,:=\,[-1,1]/\sim\,,
$$
where $\sim$ denotes the equivalence relation which identifies $-1$ and $1$. Indeed, the equations are invariant if we extend
$\rho$ and $u^h$ as even functions with respect to $x^3$, and $u^3$ as an odd function.

In what follows, we will always assume that such modifications have been performed on the initial data, and
that the respective solutions keep the same symmetry properties.
\end{rem}

\subsection{Existence of weak solutions}

Here we will always consider initial data $(\rho_0,u_0)$ such that $\rho_0\geq0$ and
\begin{equation} \label{eq:initial}
\begin{cases}
\dfrac{1}{\rm Fr}\left(\rho_0\,-\,1\right)\;\in\;L^\g(\Omega)\qquad\mbox{ and }\qquad
\dfrac{1}{\rm Fr}\left(\dfrac{1}{\rho_0}\,-\,1\right)\;\in\;L^{\g_c}(\Omega) \\[1ex]
\sqrt{\rho_0}\,u_0\;,\;\nabla\sqrt{\rho_0}\;,\;\dfrac{1}{\sqrt{\rm We}}\nabla\rho_0\quad\in\;L^2(\Omega)\,.
\end{cases}
\end{equation}
At this point, let us also introduce the internal energy functions $h(\rho)$ and $h_c(\rho)$, such that
$$
\begin{cases}
h''(\rho)\,=\,\dfrac{P'(\rho)}{\rho}\,=\,\rho^{\g-2}\qquad\qquad\mbox{ and }\qquad\qquad
h(1)\,=\,h'(1)\,=\,0\,, \\[2ex]
h_c''(\rho)\,=\,\dfrac{P_c'(\rho)}{\rho}\,=\,\rho^{-\g_c-2}\qquad\qquad\mbox{ and }\qquad\qquad
h_c(1)\,=\,h_c'(1)\,=\,0\,,
\end{cases}
$$
and let us define the classical energy 
\begin{equation}  \label{eq:def_E_2} 
E[\rho,u](t) \;:=\;\int_\Omega\left(\dfrac{1}{{\rm Fr}^2}\,h(\rho)\,+\,\dfrac{1}{{\rm Fr}^2}\,h_c(\rho)\,+\,
\dfrac{1}{2}\,\rho\,|u|^2\,+\,
\dfrac{1}{2\,{\rm We}}\,|\nabla\rho|^2\right)dx\,,
\end{equation}
and the BD entropy function
\begin{equation} \label{eq:def_F}
F[\rho](t)\;:=\;\frac{\nu^2}{2}\int_\Omega\rho\,|\nabla\log\rho|^2\,dx\;=\;
2\,\nu^2\int_\Omega\left|\nabla\sqrt{\rho}\right|^2\,dx\,.
\end{equation}
Finally, let us denote by $E[\rho_0,u_0]\,\equiv\,E[\rho,u](0)$ and by
$F[\rho_0]\,\equiv\,F[\rho](0)$ the same energies, when computed on the initial data $\bigl(\rho_0,u_0\bigr)$.

We now give the definition of weak solution to our system. The integrability properties we require (see points (i)
and (ii) below), as well as conditions \eqref{eq:initial} for the initial data, will be justified by energy estimates,
which we will establish in Subsection \ref{ss:energy}.
\begin{defin} \label{d:weak}
Fix initial data $(\rho_0,u_0)$ which satisfy the conditions in \eqref{eq:initial}, with $\rho_0\geq0$.

We say that $\bigl(\rho,u\bigr)$ is a \emph{weak solution} to system \eqref{eq:NSK}-\eqref{eq:bc}
in $[0,T[\,\times\Omega$ (for some $T>0$) with initial datum $(\rho_0,u_0)$ if the following conditions are verified:
\begin{itemize}
 \item[(i)] $\rho\geq0$ almost everywhere, and one has the  properties
 ${\rm Fr}^{-1}\bigl(\rho-1\bigr)\,\in\,L^\infty\bigl([0,T[\,;L^\g(\Omega)\bigr)$,
${\rm Fr}^{-1}\bigl(1/\rho-1\bigr)\,\in\,L^\infty\bigl([0,T[\,;L^{\g_c}(\Omega)\bigr)$, ${\rm We}^{-1/2}\nabla\rho$
and $\nabla\sqrt{\rho}\;\in L^\infty\bigl([0,T[\,;L^2(\Omega)\bigr)$ and
${\rm We}^{-1/2}\nabla^2\rho\in L^2\bigl([0,T[\,;L^2(\Omega)\bigr)$;
\item[(ii)] $\sqrt{\rho}\,u\,\in L^\infty\bigl([0,T[\,;L^2(\Omega)\bigr)$ and $\sqrt{\rho}\,Du\,\in L^2\bigl([0,T[\,;L^2(\Omega)\bigr)$;
\item[(iii)] the mass and momentum equations are satisfied in the weak sense: for any scalar function
$\phi\in\mc{D}\bigl([0,T[\,\times\Omega\bigr)$ one has the equality
$$
-\int^T_0\int_\Omega\biggl(\rho\,\d_t\phi\,+\,\rho\,u\,\cdot\,\nabla\phi\biggr)\,dx\,dt\,=\,\int_\Omega\rho_0\,\phi(0)\,dx\,,
$$
and for any vector-field $\psi\in\mc{D}\bigl([0,T[\times\Omega;\R^3\bigr)$ one has
\begin{eqnarray}
& & \hspace{-0.5cm}
\int_\Omega\rho_0\,u_0\cdot\psi(0)\,dx\,=\,\int^T_0\!\!\int_\Omega\biggl(-\rho\,u\cdot\d_t\psi\,-\,\rho\,u\otimes u:\nabla\psi\,-\,
\frac{1}{{\rm Fr}^2}\,\Pi(\rho)\,\div\psi\,+ \label{eq:weak-momentum} \\
& & \quad +\,\nu\,\rho\,Du:\nabla\psi\,+\,\frac{1}{\rm We}\,\rho\,\Delta\rho\,\div\psi\,+\,
\frac{1}{\rm We}\,\Delta\rho\,\nabla\rho\cdot\psi\,+\,\frac{\mf{c}(x^h)}{\rm Ro}\,e^3\times\rho\,u\cdot\psi\biggr)\,dx\,dt\,. \nonumber
\end{eqnarray}
\end{itemize}
\end{defin}

For such initial data, the existence of weak solutions to system \eqref{eq:NSK} is guaranteed for any fixed value of the positive
parameters ${\rm Fr}$, ${\rm We}$ and ${\rm Ro}$.
\begin{thm} \label{t:weak}
Let $\g_c=2$ in \eqref{eq:def_cold} and $\mf c\,\in\,W^{1,\infty}(\R^2)$ in \eqref{eq:coriolis}.
Fix the value of the Froude, Weber and Rossby numbers, and consider an initial datum $(\rho_0,u_0)$
satisfying conditions \eqref{eq:initial}, with $\rho_0\geq0$.

Then, there exits a global in time weak solution $(\rho,u)$ to system \eqref{eq:NSK}, 
related to $(\rho_0,u_0)$. 
\end{thm}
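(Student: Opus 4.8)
The plan is to follow the by-now classical three-step programme for compressible fluids with density--dependent (hence degenerate) viscosity: build a regularised approximation, derive uniform a priori bounds from two energy balances, and pass to the limit in the weak formulation \eqref{eq:weak-momentum} by compactness. The whole point is that, apart from two new ingredients, the argument is an adaptation of known results: the Coriolis term $\mf c(x^h)\,e^3\times\rho u$ is a skew--symmetric perturbation of size $\|\mf c\|_{L^\infty}$, hence essentially transparent to all estimates, while the cold pressure $P_c$ with $\g_c=2$ is precisely the device of \cite{B-D_2007}, \cite{B-D-Zat} which makes it possible to work with weak solutions in the \emph{classical} sense of Definition \ref{d:weak}, rather than with the modified notion of \cite{B-D-L}. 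I would therefore present the proof as an adaptation of \cite{B-D-L}, \cite{B-D_2004}, \cite{B-D_2007} and \cite{B-D-Zat}, stressing only the new points.

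\emph{Approximation.} Working in the domain $\Omega=\R^2\times\mbb{T}^1$ of Remark \ref{r:period-bc}, with the even/odd symmetry in $x^3$ built in so that the slip conditions \eqref{eq:bc} are automatically encoded, one cannot solve \eqref{eq:NSK} directly since $\nu\,\div(\rho Du)$ degenerates on $\{\rho=0\}$. As usual one adds artificial terms — an $\veps_1\Delta\rho$ diffusion in the mass equation, a higher--order regularisation of the density (or an extra $\delta$-capillarity), and a damping $-\veps_2\,u$ in the momentum equation — solves the regularised system by a Faedo--Galerkin method, and removes $\veps_1,\veps_2,\delta$ in the appropriate order. The Coriolis term plays no role here: $\mf c\in W^{1,\infty}(\R^2)$ is a fixed bounded multiplier (if needed one first mollifies it and then uses $\mf c_n\to\mf c$ in $L^\infty$ in the final passage to the limit). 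Apart from carrying this extra term along, each step is identical to \cite{B-D-L}, \cite{B-D_2007}, \cite{B-D-Zat}.

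\emph{A priori estimates.} This is the core, and it is here that $\g_c=2$ matters. Testing the momentum equation against $u$ gives the classical energy balance: the Coriolis term drops out pointwise since $(e^3\times\rho u)\cdot u=0$, the pressure and capillarity terms recombine through the mass equation, and one obtains $E[\rho,u](t)+\nu\int_0^t\!\!\int_\Omega\rho\,|Du|^2\,dx\,ds\,\le\,E[\rho_0,u_0]$ with $E$ as in \eqref{eq:def_E_2}; this is carried out in Subsection \ref{ss:energy}. The second, decisive balance is the BD entropy estimate: testing the momentum equation against the effective velocity $u+\nu\nabla\log\rho$ (equivalently, computing $\frac{d}{dt}F[\rho]$ for $F$ as in \eqref{eq:def_F} and combining with the energy identity) yields extra dissipation controlling $\nabla\sqrt\rho$ in $L^\infty_tL^2_x$, $\sqrt\rho\,\nabla^2\log\rho$ and $\nabla\rho^{\g/2}$ in $L^2_tL^2_x$, the capillarity contribution $\nabla^2\rho$ in $L^2_tL^2_x$, and — thanks to $P_c'(\rho)/\rho=\tfrac12\rho^{-4}$ when $\g_c=2$ — the quantity $\nabla(1/\rho)$ in $L^2_tL^2_x$. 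The Coriolis term here produces only $\nu\int_\Omega\mf c\,(e^3\times u)\cdot\nabla\rho\,dx$, bounded by $\|\mf c\|_{L^\infty}\,\|\sqrt\rho\,u\|_{L^2}\,\|\nabla\sqrt\rho\|_{L^2}$ and hence absorbed by a Gr\"onwall argument. The more technical parts of this computation I would postpone to Appendix \ref{app:BD}, as announced. Combining the two balances gives precisely the uniform bounds (i)--(ii) of Definition \ref{d:weak} on every $[0,T]$, with constants depending only on the data — which is what yields a \emph{global} solution.

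\emph{Compactness and limit.} From the bounds, $\nabla\rho_n$ is bounded in $L^\infty_tL^2_x\cap L^2_tH^1_x$ and, by the mass equation, $\d_t\rho_n$ in a space of negative order, so Aubin--Lions gives $\rho_n\to\rho$ strongly in $C([0,T];L^2_{loc})$ and a.e., and $\nabla\rho_n\to\nabla\rho$ strongly in $L^2_{loc}$. The cold pressure bound forces $1/\rho_n$ to be bounded in $L^\infty_tL^2_x\cap L^2_t\dot H^1_x$, hence in $L^p_{loc}$ for some $p>2$; together with $\sqrt{\rho_n}\,Du_n$ bounded in $L^2_{t,x}$ and Korn's inequality, $\nabla u_n$ is then bounded in $L^r_{loc}$ for some $r>1$, so $u_n$ is itself compact in $L^2_{loc}$ and converges a.e. This is exactly what replaces the $\rho\psi$-test-function device of \cite{B-D-L} and allows the passage $\rho_n u_n\otimes u_n\to\rho\,u\otimes u$ (writing the tensor as $(\sqrt{\rho_n}u_n)\otimes(\sqrt{\rho_n}u_n)$ and exploiting the resulting equi-integrability, as in \cite{B-D-Zat}). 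All remaining terms pass to the limit routinely: $\sqrt{\rho_n}\,Du_n\upra\sqrt\rho\,Du$ against the strong limit of $\sqrt{\rho_n}$ for the viscous term; a.e. convergence of $\rho_n$ plus equi-integrability (de la Vall\'ee-Poussin, from the energy control of $h(\rho_n)$ and $h_c(\rho_n)$) for the pressure $\Pi(\rho_n)$; weak $L^2_tL^2_x$ convergence of $\Delta\rho_n$ with strong convergence of $\rho_n$ and $\nabla\rho_n$ for the Korteweg terms $\tfrac{1}{\rm We}(\rho\,\Delta\rho\,\div\psi+\Delta\rho\,\nabla\rho\cdot\psi)$; and $\mf c\in L^\infty$ with $\rho_n u_n\to\rho u$ for the Coriolis term. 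One thus gets a weak solution in the sense of Definition \ref{d:weak} on every $[0,T]$, i.e. a global one. The main obstacle is precisely the simultaneous control of the BD entropy at the regularised level and of the cold-pressure integrability that upgrades $u$ enough to handle $\div(\rho u\otimes u)$; once this is in place, the rest is standard.
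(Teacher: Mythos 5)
Your proposal is correct and takes the same route the paper itself relies on: the paper gives no proof of Theorem~\ref{t:weak} at all, stating only that ``the previous result can be established arguing exactly as in [B-D-Zat]'' (whose hypotheses are in fact more general). Your three-step programme — degenerate-viscosity approximation à la Bresch–Desjardins, classical energy plus BD entropy estimates, then cold-pressure-driven compactness for $u$ allowing the classical weak formulation — is precisely what [B-D-Zat] does, with the Coriolis term riding along as a skew-symmetric $L^\infty$ perturbation; this matches the paper's Remark~\ref{r:existence}, which notes that for the existence result alone (fixed parameters) the $W^{1,\infty}$ regularity of $\mf c$ can be relaxed to $L^\infty$, the gradient of $\mf c$ being needed only later for the uniform-in-$\veps$ bounds of Lemma~\ref{l:rot}(ii).
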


The previous result can be established arguing exactly as in \cite{B-D-Zat}, so we omit its proof. Actually, the result
of \cite{B-D-Zat} holds true under more general assumptions than ours (as for the cold component of the pressure and the viscosity
coefficient, for instance).

\begin{rem} \label{r:existence}
\begin{itemize}
 \item The hypothesis $\g_c=2$ is assumed just for simplicity here, but, as remarked above, it is not really necessary for existence.
 \item The condition $\mf{c}\,\in\,W^{1,\infty}$ is important in order to take advantage of the BD entropy structure of our system,
see Paragraph \ref{sss:BD}. However, it can be deeply relaxed at this level: see also the discussion at the beginning
of Subsection \ref{ss:regularity}.
\end{itemize}
\end{rem}

\subsection{The singular perturbation problem}

We get now interested in a singular perturbation problem for system \eqref{eq:NSK}. Namely, we want to study the incompressible
and high rotation limit simultaneously, both in the regimes of constant and vanishing capillarity (in the same spirit
of the analysis of \cite{F}).

For doing this, we consider a small parameter $\veps\in\,]0,1]$: we set ${\rm Fr}\,=\,{\rm Ro}\,=\,\veps$,
${\rm We}\,=\,\veps^{2(1-\alpha)}$, for some $0\leq\alpha\leq1$. Notice that the constant capillarity regime corresponds
to the choice $\alpha=0$, while in the other cases we are letting also the capillarity coefficient go to $0$.

Therefore, we end up with the equations
\begin{equation} \label{eq:NSK-sing}
\begin{cases}
\d_t\rho+\div\left(\rho u\right)\,=\,0 \\[1ex]
\d_t\left(\rho u\right)+\div\bigl(\rho u\otimes u\bigr)+\dfrac{1}{\veps^2}\,\nabla\Pi(\rho)+
\dfrac{1}{\veps}\,\mf{C}(\rho,u)-\nu\div\bigl(\rho Du\bigr)-
\dfrac{1}{\veps^{2(1-\alpha)}}\,\rho \nabla\Delta\rho\,=\,0\,,
\end{cases}
\end{equation}

Here we will consider the general instance of \emph{ill-prepared} initial data
$\bigl(\rho,u\bigr)_{|t=0}=\bigl(\rho_{0,\veps},u_{0,\veps}\bigr)$. Namely, we will suppose
the following assumptions on the family $\bigl(\rho_{0,\veps}\,,\,u_{0,\veps}\bigr)_{\veps>0}$:
\begin{itemize}
\item[(i)] $\rho_{0,\veps}\,=\,1\,+\,\veps\,r_{0,\veps}$, with
$\bigl(r_{0,\veps}\bigr)_\veps\,\subset\,H^1(\Omega)\cap L^\infty(\Omega)$ bounded;
\item[(ii)] $1/\rho_{0,\veps}\,=\,1\,+\,\veps\,a_{0,\veps}$, with
$\bigl(a_{0,\veps}\bigr)_\veps\,\subset\,L^2(\Omega)$ bounded;
\item[(iii)] $\bigl(u_{0,\veps}\bigr)_\veps\,\subset\,L^2(\Omega)$ bounded.
\end{itemize}
Up to extraction of a subsequence, we can suppose that
\begin{equation} \label{eq:conv-initial}
r_{0,\veps}\,\rightharpoonup\,r_0\quad\mbox{ in }\;H^1(\Omega)\;,\qquad
a_{0,\veps}\,\rightharpoonup\,a_0\,=\,-\,r_0\quad\mbox{ in }\;L^2(\Omega)\;,\qquad
u_{0,\veps}\,\rightharpoonup\,u_0\quad\mbox{ in }\;L^2(\Omega)\,,
\end{equation}
where we denoted by $\rightharpoonup$ the weak convergence in the respective spaces.

\begin{rem} \label{r:cold-density}
The property $a_0=-r_0$ immediately follows from the weak convergence. Indeed, for any test function $\vphi\in\mc{D}(\Omega)$,
by definition of $a_{0,\veps}$ we have
$$
\int_\Omega a_{0,\veps}\,\vphi\,dx\,=\,-\,\frac{1}{\veps}\int_\Omega\frac{1}{\rho_{0,\veps}}\left(\rho_{0,\veps}-1\right)\,\vphi\,dx\,=\,
-\int_\Omega r_{0,\veps}\,\vphi\,dx\,-\,\veps\int_\Omega r_{0,\veps}\,a_{0,\veps}\,\vphi\,dx\,.
$$
The left-hand side of the previous equality converges to $\int_\Omega a_0\vphi\,dx$; as for the right-hand side, the former term converges
to $-\int_\Omega r_0\vphi\,dx$, and the latter goes to $0$ for $\veps\ra0$.
\end{rem}

\begin{rem} \label{rem:density}
Notice that the choice of constant density profile in the limit, i.e. $\oline{\rho}\equiv1$, is consistent with the balance of forces
acting on the system. As a matter of fact, $\oline{\rho}$ is identified as a solution of the static problem
$$
\nabla\Pi(\oline{\rho})\,=\,-\,\veps^{2\alpha}\,\oline{\rho}\,\nabla\Delta\oline{\rho}\,,
$$
and this relation implies, up to an additive constant,
$$
\qquad\qquad -\,\veps^{2\alpha}\,\Delta\oline{\rho}\,=\,\Xi(\oline{\rho})\,,\qquad\qquad\mbox{ with }\qquad
\Xi(\rho)\,:=\,\int_1^\rho\bigl(\Pi'(\sigma)/\sigma\bigr)\,d\sigma\,.
$$
Of course, $\oline{\rho}\equiv1$ solves the previous elliptic equation. However, depending on the non-linearity $\Pi$, when $\alpha=0$
one is led to consider also non-constant limit density profiles: this was already the case in \cite{F-G-GV-N} for a barotropic Navier-Stokes
system with Coriolis term, when the centrifugal force is assumed of the same order as the pressure and the rotation.

The analogue for capillary fluids will be matter of future studies.
\end{rem}

Furthermore, we need to slightly modify the definition of weak solutions: namely, in addition to the conditions of Definition \ref{d:weak},
we also demand that the weak solutions are constructed in such a way to satisfy relevant uniform bounds in $\veps$.
More precisely, we set $E_\veps[\rho,u]$ and $F_\veps[\rho]$ the energies defined in \eqref{eq:def_E_2} and \eqref{eq:def_F}
respectively, where we take the scaling ${\rm Fr}={\rm Ro}=\veps$, ${\rm We}=\veps^{2(1-\alpha)}$: hence we require that,
for almost every $t\in\,]0,T]$, the following inequalities hold true:
\begin{eqnarray} 
E_\veps[\rho,u](t)\,+\,\nu\int^t_0\int_\Omega\rho\,\left|Du\right|^2\,dx\,d\tau & \leq & E_\veps[\rho_0,u_0] \label{en_est:E} \\
F_\veps[\rho](t)\,+\,\dfrac{\nu}{\veps^2}\int^t_0\int_\Omega P'(\rho)\,|\nabla\sqrt{\rho}|^2\,dx\,d\tau\,+\,
\dfrac{\nu}{\veps^{2(1-\alpha)}}\int^t_0\int_\Omega\left|\nabla^2\rho\right|^2\,dx\,d\tau & \leq & C\,(1+T)\,, \label{en_est:F}
\end{eqnarray}
where the constant $C$ depends just on the triplet $\bigl(E_\veps[\rho_{0},u_0],F_\veps[\rho_{0}],\nu\bigr)$.

From now on, we will focus only on the regime of constant capillarity, i.e. $\alpha=0$. Indeed, our main goal is to capture the effects
of the surface tension in the asymptotics, which seems to be a new feature in this kind of studies. However, we remark that the vanishing capillarity limit (i.e.
$\alpha\in\,]0,1]$) can be dealt with as in \cite{F} when $\mf{c}\equiv1$, or by similar arguments as in Section \ref{s:singular}
when $\mf{c}$ is non-constant.

First of all, let us consider the case of constant rotation axis, namely when $\mf{c}\equiv1$ in \eqref{eq:coriolis}.
\begin{thm} \label{t:sing}
Let $1<\g\leq2$ in \eqref{eq:def_P}, $\alpha=0$ and $\mf{C}(\rho,u)=e^3\times\rho\,u$ in \eqref{eq:NSK-sing}.

Let $\bigl(\rho_{0,\veps},u_{0,\veps}\bigr)_\veps$ be initial data satisfying the hypotheses ${\rm (i)-(ii)-(iii)}$ and
\eqref{eq:conv-initial}, and let $\bigl(\rho_\veps\,,\,u_\veps\bigr)_\veps$ be a family of corresponding weak solutions to system
\eqref{eq:NSK-sing}-\eqref{eq:bc} in $[0,T]\times\Omega$, in the sense of Definition \ref{d:weak}. Suppose that inequalities \eqref{en_est:E}-\eqref{en_est:F}
hold true and that the symmetriy properties of Remark \ref{r:period-bc} are verified. We also define $r_\veps\,:=\,\veps^{-1}\left(\rho_\veps-1\right)$.

Then, up to the extraction of a subsequence, $\bigl(r_\veps\,,\,u_\veps\bigr)_\veps$ weakly converges (in suitable energy
spaces\footnote{See points $(a)$ and $(b)$ in Theorem \ref{t:sing_var}.}) to $(r,u)$,
where $r=r(x^h)$ and $u=\bigl(u^h(x^h),0\bigr)$ are linked by the relation
$u^h\,=\,\nabla^\perp_h\left(\Id-\Delta_h\right)r$. Moreover,
$r$ is a weak solution of the modified quasi-geostrophic equation
\begin{equation} \label{eq:q-geo_0}
\d_t\bigl(\Id-\Delta_h+\Delta_h^2\bigr)r\,+\,\nabla^\perp_h\bigl(\Id-\Delta_h\bigr)r\,\cdot\,\nabla_h\Delta_h^2r\,+\,
\frac{\nu}{2}\,\Delta_h^2\bigl(\Id-\Delta_h\bigr)r\,=\,0
\end{equation}
supplemented with the initial condition $r_{|t=0}\,=\,\wtilde{r}_0$, where $\wtilde{r}_0\,\in\,H^3(\R^2)$ is identified by
$$
\bigl(\Id-\Delta_h+\Delta_h^2\bigr)\,\wtilde{r}_0\,=\,\int_0^1\bigl(\omega^3_0\,+\,r_0\bigr)\,dx^3\,.
$$
\end{thm}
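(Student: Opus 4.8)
The plan is to follow the classical strategy for singular limits of rotating fluids à la Feireisl--Gallagher--Novotn\'y, adapting it to the presence of the capillarity term and the cold pressure. First I would collect the uniform bounds coming from \eqref{en_est:E}--\eqref{en_est:F}: these give $\bigl(\sqrt{\rho_\veps}\,u_\veps\bigr)_\veps$ bounded in $L^\infty_T L^2$, $\bigl(\sqrt{\rho_\veps}\,Du_\veps\bigr)_\veps$ bounded in $L^2_T L^2$, $\bigl(r_\veps\bigr)_\veps$ bounded in $L^\infty_T H^1$ (using $\mathrm{We}=1$ so that $\nabla\rho_\veps/\veps$ and $\nabla^2\rho_\veps/\veps$ are controlled), and negative powers of the density under control via the cold pressure. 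From $\rho_\veps=1+\veps r_\veps$ one deduces $\rho_\veps\to1$ strongly, hence $u_\veps\rightharpoonup u$ in $L^2_T L^2$ and $Du_\veps\rightharpoonup Du$ in $L^2_T L^2$ with $\|Du\|_{L^2_T L^2}<\infty$; passing to the limit in the mass equation divided by $\veps$ yields $\div u=0$, and the momentum equation multiplied by $\veps$ gives, in the limit, the constraint $e^3\times u+\nabla(\Xi'(1)\,r+\mathrm{something})$... more precisely $\nabla\bigl(r-\Delta_h r\bigr)=-e^3\times u$ after using $\Pi'(1)=1$ and the linearised capillarity term $-\nabla\Delta r$. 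This forces $\partial_3$-independence of the limit (Taylor--Proudman), $u^3=0$, $r=r(x^h)$, and the relation $u^h=\nabla_h^\perp(\Id-\Delta_h)r$.

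Next I would derive the limit equation. The natural object is the vorticity: take the $\mathrm{curl}$ (or rather the third component of the curl, i.e. $\nabla_h^\perp\cdot$) of the momentum equation so as to kill the singular pressure and capillarity gradients, and also apply the vertical average $\langle\cdot\rangle$ from \eqref{dec:vert-av}. One obtains a transport-type balance for a quantity of the form $\langle\omega^3_\veps\rangle+\langle r_\veps\rangle - \Delta_h\langle r_\veps\rangle$ (the combination dictated by the structure of the singular operator, with the extra $\Delta_h^2 r$ piece coming from the capillarity contribution to the ``potential vorticity''), with source terms: the viscous term $\tfrac\nu2\Delta_h^2(\Id-\Delta_h)r$ in the limit, and the convective term $\div(\rho_\veps u_\veps\otimes u_\veps)$. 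The heart of the matter is passing to the limit in the nonlinear convective term, which is where compensated compactness enters: the oscillating (acoustic/Poincar\'e-wave) parts of $(r_\veps,u_\veps)$ do not converge strongly, so one must show that their contribution to the nonlinearity either vanishes or reassembles into the stated quadratic term $\nabla_h^\perp(\Id-\Delta_h)r\cdot\nabla_h\Delta_h^2 r$. Following the excerpt's own description, I would split $u_\veps$ and $r_\veps$ into vertical-average and zero-average parts, use the equations to get compactness in time for the ``essential'' (vertically averaged, divergence-free-ish) part via Aubin--Lions, and show that the genuinely oscillatory part contributes only terms that are time-derivatives or gradients (hence killed by $\nabla_h^\perp\cdot$), plus the one term coming from the density oscillations which — as the author highlights — exactly cancels the analogous capillarity-induced term. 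Concretely this means writing $\rho_\veps u_\veps\otimes u_\veps$, extracting the $\veps^{-1}$-scaled pieces, and tracking which bilinear interactions survive.

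The key technical obstacle, and the one the introduction flags, is this cancellation between the capillarity remainder and a piece of the convective term, carried out not on the exact solutions but after a regularisation/mollification in order to justify the integrations by parts — the mollification generating commutator terms between the smoothing operator and the variable coefficients (here trivial since $\mf c\equiv1$, but the machinery is the same) and, more seriously, terms that are only controlled if one has enough regularity; since $\mf c\equiv1$ in this theorem the commutator difficulties disappear, so the real work is the algebraic bookkeeping of the cancellation plus showing all remainders vanish as the regularisation parameter and then $\veps$ go to zero, in the correct order. Finally I would identify the initial datum: the limit equation is first order in time for $(\Id-\Delta_h+\Delta_h^2)r$, so I pass to the limit in the vertically-averaged, $\nabla_h^\perp\cdot$-projected momentum equation tested against a time-dependent test function, evaluate at $t=0$ using the assumed convergences \eqref{eq:conv-initial} of $r_{0,\veps}$ and $u_{0,\veps}$, and read off $(\Id-\Delta_h+\Delta_h^2)\wtilde r_0=\int_0^1(\omega_0^3+r_0)\,dx^3$, with $\wtilde r_0\in H^3(\R^2)$ following from $r_0\in H^1$ and $\omega_0^3\in H^{-1}$ via elliptic regularity for the operator $\Id-\Delta_h+\Delta_h^2$. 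Uniqueness of the limit then upgrades the subsequential convergence where needed. I would not belabour the routine parts (weak lower semicontinuity, standard Littlewood--Paley estimates from Appendix \ref{app:LP}); the proof proper is the compensated-compactness step and the cancellation.
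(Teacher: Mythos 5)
Your proposal is correct in spirit but takes a genuinely different route from the paper's. The paper explicitly states that this theorem is proved (in the companion paper \cite{F}) by \emph{spectral analysis of the constant-coefficient singular operator together with the RAGE theorem}: RAGE supplies strong (time-averaged, local-in-space) decay of the entire oscillatory part of $(r_\veps,V_\veps)$, after which the nonlinearity passes to the limit by strong--strong convergence. You instead propose the compensated-compactness/Aubin--Lions route of Section~\ref{s:singular}. That route is viable here, but one must be careful about the claim that ``the machinery is the same'' when $\mf c\equiv1$. The computations leading to \eqref{eq:T^1_a} go through and produce the term
$\bigl({\rm curl}_h\langle V^h_{\veps,M}\rangle - \langle r_{\veps,M}\rangle\bigr)\langle V^h_{\veps,M}\rangle^\perp$,
but the mechanism that kills it in the variable-axis case --- the decomposition along $\{\nabla_h\mf c,\nabla_h^\perp\mf c\}$ together with hypothesis \eqref{eq:non-crit} --- is vacuous when $\nabla_h\mf c\equiv0$, and indeed this term must \emph{survive}, since it is precisely the quasi-geostrophic advection. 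So the essential new ingredient you need is that the scalar potential vorticity $\gamma_{\veps,M}:={\rm curl}_h\langle V^h_{\veps,M}\rangle - \langle r_{\veps,M}\rangle$ is compact in time: from \eqref{eq:curl-cr} with $\mf c$ constant (so $g_{\veps,M}\equiv0$ and $\nabla_h\mf c\equiv0$) one has $\d_t\gamma_{\veps,M}={\rm curl}_h\langle f^h_{\veps,M}\rangle$ uniformly bounded, whence Aubin--Lions gives strong convergence of $\gamma_{\veps,M}$ to $\gamma=-(\Id-\Delta_h+\Delta_h^2)r$, and the bilinear term passes to the limit by strong-times-weak. Your proposal alludes to this (Aubin--Lions on the ``essential part''), but identifies the compact object imprecisely --- it is the potential vorticity, a scalar, not the ``vertically averaged, divergence-free-ish'' velocity, which still carries Poincar\'e-wave oscillations and does not converge strongly. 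With that correction your strategy closes; it has the advantage of being self-contained within the Section~\ref{s:singular} toolbox (no RAGE, no spectral decomposition of $\wtilde{\mc A}$), at the cost of having to isolate and separately handle the surviving nonlinear piece, whereas the paper's RAGE route gives a cleaner ``oscillations decay'' statement once the spectral analysis is in place.
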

We omit the proof of this result here, because it goes along the lines of the one given in \cite{F}. Its main ingredients are the spectral analysis of the
(constant coefficient) singular perturbation operator and an application of the RAGE Theorem. We remark that the RAGE Theorem allows to deduce strong
convergence properties in suitable norms, and this is the key to pass to the limit in the non-linear terms.

Let us now consider the case of effectively variable rotation axis.
For technical reason, analogously to what done in \cite{G-SR_2006}, we need to assume that the function $\mf c$ has non-degenerate
critical points: namely, we will suppose
\begin{equation} \label{eq:non-crit}
\lim_{\delta\ra0}\;\mc{L}\left(\left\{x^h\,\in\,\R^2\;\Bigl|\;\bigl|\nabla_h\mf{c}(x^h)\bigr|\,\leq\,\delta\right\}\right)\,=\,0\,,
\end{equation}
where we denoted by $\mc L(\mc O)$ the $2$-dimensional Lebesgue measure of a set $\mc O\,\subset\R^2$.

Also, we will require that the gradient of $\mf{c}$ is $\mu$-continuous, for some admissible modulus of continuity $\mu$:
we will recall the precise definition in Appendix \ref{app:continuity}.

For notation convenience, let us also introduce the operator
$$
\mf{D}_{\mf{c}}(f)\,:=\,D_h\bigl(\mf{c}^{-1}\,\nabla_h^\perp f\bigr)\,=\,\frac{1}{2}\,\left(\nabla_h\,+\,^t\nabla_h\right)\bigl(\mf{c}^{-1}\,\nabla_h^\perp f\bigr)
$$
for any scalar function $f\,=\,f(x^h)$.

\begin{thm} \label{t:sing_var}
Let $1<\g\leq2$ in \eqref{eq:def_P}, $\alpha=0$ and $\mf{C}(\rho,u)=\mf{c}(x^h)\,e^3\times\rho\,u$ in \eqref{eq:NSK-sing},
where $\mf{c}\in W^{1,\infty}(\R^2)$ is $\neq0$ almost everywhere and it verifies the non-degeneracy condition \eqref{eq:non-crit}.
Let us also assume that $\nabla_h\mf{c}\,\in\,\mc{C}_\mu(\R^2)$, for some admissible modulus of continuity $\mu$.

Let $\bigl(\rho_{0,\veps},u_{0,\veps}\bigr)_\veps$ be initial data satisfying the hypotheses ${\rm (i)-(ii)-(iii)}$ and
\eqref{eq:conv-initial}, and let $\bigl(\rho_\veps\,,\,u_\veps\bigr)_\veps$ be a family of corresponding weak solutions to system
\eqref{eq:NSK-sing}-\eqref{eq:bc} in $[0,T]\times\Omega$, in the sense of Definition \ref{d:weak}. Suppose that inequalities \eqref{en_est:E}-\eqref{en_est:F}
hold true and that the symmetriy properties of Remark \ref{r:period-bc} are verified.
Define $r_\veps\,:=\,\veps^{-1}\left(\rho_\veps-1\right)$ as above.

Then, up to the extraction of a subsequence, one has the following convergence properties: 
\begin{itemize}
\item[(a)] $r_\veps\,\rightharpoonup\,r$ in $L^\infty\bigl([0,T];H^1(\Omega)\bigr)\,\cap\,L^2\bigl([0,T];H^2(\Omega)\bigr)$,
\item[(b)] $\sqrt{\rho_\veps}\,u_\veps\,\rightharpoonup\,u$ in $L^\infty\bigl([0,T];L^2(\Omega)\bigr)$ and
 $\sqrt{\rho_\veps}\,Du_\veps\,\rightharpoonup\,Du$ in $L^2\bigl([0,T];L^2(\Omega)\bigr)$,
\end{itemize}
where, this time, $r=r(x^h)$ and $u=\bigl(u^h(x^h),0\bigr)$ verify the relation
$\mf{c}(x^h)\,u^h\,=\,\nabla^\perp_h\left(\Id-\Delta_h\right)r$. Moreover,
$r$ solves (in the weak sense) the equation
\begin{equation} \label{eq:lim_var}
\d_t\left(r\,-\,\div_{\!h}\!\left(\frac{1}{\mf{c}^2}\,\nabla_h\bigl(\Id-\Delta_h\bigr)r\right)\right)\,+\,
\nu\,\;^t\mf{D}_{\mf{c}}\,\circ\,\mf{D}_{\mf{c}}\bigl((\Id-\Delta_h)r\bigr)\,=\,0
\end{equation}
supplemented with the initial condition $r_{|t=0}\,=\,\wtilde{r}_0$, where $\wtilde{r}_0$ is defined by
$$
\wtilde{r}_0\,-\,\div_{\!h}\!\left(\frac{1}{\mf{c}^2}\,\nabla_h\bigl(\Id-\Delta_h\bigr)\wtilde{r}_0\right)\,=\,
\int_0^1\Bigl({\rm curl}_h\bigl(\mf{c}^{-1}\,u^h_0\bigr)\,+\,r_0\Bigr)\,dx^3\,.
$$
\end{thm}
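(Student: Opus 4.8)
The plan is to pass to the limit $\veps\to0$ in the weak formulation \eqref{eq:weak-momentum}, exploiting the two energy inequalities \eqref{en_est:E}--\eqref{en_est:F} for uniform bounds and compensated compactness for the nonlinear terms. First I would collect the a priori bounds: \eqref{en_est:E} gives $\sqrt{\rho_\veps}\,u_\veps$ bounded in $L^\infty_tL^2_x$, $\sqrt{\rho_\veps}\,Du_\veps$ bounded in $L^2_{t,x}$, $r_\veps$ bounded in $L^\infty_tH^1_x$ (from the $h$, $h_c$ and $|\nabla\rho|^2$ terms, using $1<\g\le2$, $\g_c\ge1$), while \eqref{en_est:F} with $\alpha=0$ upgrades this to $\nabla^2\rho_\veps$ bounded in $L^2_{t,x}$, hence $r_\veps$ bounded in $L^2_tH^2_x$. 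These give the weak limits in (a)--(b) after extracting a subsequence; one also checks $\rho_\veps\to1$ strongly (since $\rho_\veps-1=\veps r_\veps$) so that $\sqrt{\rho_\veps}\,u_\veps$ and $u_\veps$ have the same weak limit $u$ in the bulk. Next I would identify the constraint satisfied by the limit: multiplying the momentum equation by $\veps$ and letting $\veps\to0$, the singular terms $\veps^{-1}\nabla\Pi(\rho_\veps)$, $\veps^{-1}\rho_\veps\nabla\Delta\rho_\veps$ and $\mf{c}\,e^3\times\rho_\veps u_\veps$ must balance; writing $\Pi(\rho_\veps)=1+\veps\,\Pi'(1)r_\veps+o(\veps)=1+\veps r_\veps+o(\veps)$ and $\rho_\veps\nabla\Delta\rho_\veps=\veps\nabla\Delta r_\veps+o(\veps)$, this yields $\nabla_h(r-\Delta_h r)=\mf{c}\,e^3\times u^h$ in the horizontal components and $\d_3$ of the same quantity $=0$; combined with the mass equation constraint $\div(\rho_\veps u_\veps)\to\div\,u=0$ and the boundary/symmetry conditions, one deduces (Taylor--Proudman) that $u=(u^h(x^h),0)$, $r=r(x^h)$, and $\mf{c}\,u^h=\nabla_h^\perp(\Id-\Delta_h)r$, i.e. $u^h=\mf{c}^{-1}\nabla_h^\perp(\Id-\Delta_h)r$.

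Having the constraint, the next step is to derive the limit equation. The natural approach is to take in \eqref{eq:weak-momentum} test functions of the form $\psi=\nabla_h^\perp\phi(x^h)$ (well-prepared, divergence-free, $x^3$-independent, tangent to $\d\Omega$) so that the pressure and capillarity terms involving $\div\psi$ drop, the Coriolis term $\veps^{-1}\mf{c}\,e^3\times\rho_\veps u_\veps\cdot\nabla_h^\perp\phi=\veps^{-1}\mf{c}\,\rho_\veps u_\veps^h\cdot\nabla_h\phi$ is controlled by the mass equation (it equals, modulo lower order, $-\veps^{-1}\d_t(\rho_\veps-1)$ tested suitably after using $\div(\rho u)$, giving a time-derivative term), and the capillary force term $\veps^{-1}\Delta\rho_\veps\nabla\rho_\veps\cdot\psi=\veps\,\Delta r_\veps\nabla r_\veps\cdot\psi$ together with the convective term $\rho_\veps u_\veps\otimes u_\veps:\nabla\psi$ remain. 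The key algebraic point, flagged in the introduction, is that the troublesome capillary contribution cancels against a piece of the convective term: writing the convective term using the constraint $u_\veps^h\approx\mf{c}^{-1}\nabla_h^\perp(\Id-\Delta_h)r_\veps$ and the identity $\nabla_h^\perp f\cdot\nabla_h g\,\Delta_h f$-type manipulations, the quartic-looking capillary remainder is exactly absorbed. To make this rigorous one first regularizes: replace $r_\veps$ by $S_k r_\veps$ (Littlewood-Paley truncation, Appendix \ref{app:LP}), decompose the horizontal velocity in the frame $\{\nabla_h\mf{c},\nabla_h^\perp\mf{c}\}$, use the vorticity equation $\mathrm{curl}_h$ of the momentum equation (this is where $\mf{c}\in W^{1,\infty}$ and the non-degeneracy \eqref{eq:non-crit} enter, to invert and to control the component along $\nabla_h\mf{c}$), and estimate the commutators $[S_k,\mf{c}]$ and $[S_k,\nabla_h\mf{c}]$. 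The assumption $\nabla_h\mf{c}\in\mc{C}_\mu$ is precisely what gives these commutators and their gradients a gain that vanishes as $k\to\infty$ uniformly in $\veps$ (the fine commutator estimates of Appendix \ref{app:continuity}); sending first $\veps\to0$, then $k\to\infty$, produces \eqref{eq:lim_var}.

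Finally I would identify the structure of \eqref{eq:lim_var}: the time-derivative term assembles into $\d_t(r-\div_h(\mf{c}^{-2}\nabla_h(\Id-\Delta_h)r))$ (the $r$ from the pressure/mass contribution, the second piece from the Coriolis$\times$constraint term), while the viscous term $\nu\int\rho_\veps Du_\veps:\nabla\psi$ passes to the limit weakly using (b) and, rewriting $Du=\mf{D}_{\mf c}((\Id-\Delta_h)r)$ via the constraint together with an integration by parts against $\psi$, gives $\nu\,{}^t\mf{D}_{\mf c}\circ\mf{D}_{\mf c}((\Id-\Delta_h)r)$. The initial condition is obtained by testing the $\veps$-problem at $t=0$ with a curl-type test function and using \eqref{eq:conv-initial}: the weak limit of $\mathrm{curl}_h(\mf{c}^{-1}\rho_{0,\veps}u^h_{0,\veps})+r_{0,\veps}$ integrated over $x^3$ yields the stated elliptic identity for $\wtilde r_0$, whose solvability in the right space follows since $\Id-\div_h(\mf{c}^{-2}\nabla_h(\Id-\Delta_h)\cdot)$ is elliptic (here again $\mf c\ne0$ a.e. and $\mf c\in W^{1,\infty}$ are used). \textbf{The main obstacle} I expect is the regularization/commutator step: unlike \cite{G-SR_2006} and \cite{F-G-GV-N}, the degenerate viscosity gives only $\sqrt{\rho_\veps}\,Du_\veps\in L^2$ rather than full control of $\nabla u_\veps$, so the frame decomposition and the vorticity-equation manipulations must be carried out with these weaker controls, and the commutators must be estimated sharply enough — under the mere $\mc{C}_\mu$ regularity of $\nabla_h\mf c$ — that the remainders, together with their horizontal gradients (needed to feed the vorticity formulation), still go to zero in the iterated limit.
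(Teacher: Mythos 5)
Your overall strategy matches the paper's: collect uniform bounds from the energy and BD-entropy inequalities, identify the Taylor--Proudman constraint, regularize with a Littlewood--Paley cut-off $S_M$, run a compensated compactness argument that exhibits a cancellation between the capillary term and the density-dependent part of the convective term, and control the regularization remainders via commutator estimates under the $\mc{C}_\mu$ regularity of $\nabla_h\mf{c}$. The account of why $W^{1,\infty}$, the non-degeneracy condition \eqref{eq:non-crit}, and the frame $\{\nabla_h\mf{c},\nabla_h^\perp\mf{c}\}$ enter is also in line with the paper.

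There is, however, one genuine gap in the execution: your choice of test functions. Taking \emph{arbitrary} $\psi=\nabla_h^\perp\phi(x^h)$ only ensures $\div\psi=0$; it does \emph{not} place $(\phi,\psi)$ in $\mathrm{Ker}\,\wtilde{\mc A}$. The kernel constraint in this variable-axis setting is stronger: one needs $\mf{c}\,\psi^h=\nabla^\perp_h\wtilde\phi$ with $\wtilde\phi=(\Id-\Delta_h)\phi$, and the divergence-free condition then forces $\nabla_h^\perp\wtilde\phi\cdot\nabla_h\mf{c}=0$, i.e.\ $\psi^h\cdot\nabla_h\mf{c}=0$. This extra constraint is not decorative. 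If you only impose $\div\psi=0$, then matching the Coriolis term against the mass equation (tested on $\mf{c}\phi$) produces
$$
\frac{1}{\veps}\int \mf{c}\,\rho_\veps u^h_\veps\cdot\nabla_h\phi\;=\;\frac{1}{\veps}\int \rho_\veps u_\veps\cdot\nabla(\mf{c}\phi)\;-\;\frac{1}{\veps}\int \rho_\veps u^h_\veps\,\phi\,\cdot\nabla_h\mf{c}\,,
$$
and while the first term converts into a $\d_t r_\veps$ contribution, the second term is $O(\veps^{-1})$ and is \emph{not} controlled by the available bounds at finite $\veps$ (the relation $u^h\cdot\nabla_h\mf{c}=0$ only holds for the limit). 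The paper avoids this by working from the start with $(\phi,\psi)\in\mathrm{Ker}\,\wtilde{\mc A}$, so the Coriolis term becomes exactly $\veps^{-1}\int\rho_\veps u^h_\veps\cdot\nabla_h\wtilde\phi$, fully absorbable through the mass equation. Moreover, the property $\psi^h\cdot\nabla_h\mf{c}=0$ is used again repeatedly in the compensated compactness manipulations (e.g.\ to discard, after testing, the components of $\lan V^h_{\veps,M}\ran^\perp$ and of $(\wtilde W^h_{\veps,M})^\perp$ that are parallel to $\nabla_h\mf{c}$). Your proposal should state the test-function class as the kernel of $\wtilde{\mc A}$, and record explicitly both the relation $\mf{c}\,\psi^h=\nabla^\perp_h\wtilde\phi$ and the orthogonality $\psi^h\cdot\nabla_h\mf{c}=0$ before entering the algebra; with that correction, the rest of your outline is sound.

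A secondary, smaller remark: the limiting equation is naturally obtained as an equation tested against $\wtilde\phi=(\Id-\Delta_h)\phi$ rather than against $\phi$ directly, so the weak formulation you end up with is in terms of $\wtilde\phi$. This follows automatically once the test functions are taken in the kernel, but it is worth making explicit since it determines in what sense the limit density solves \eqref{eq:lim_var} and how the elliptic problem for $\wtilde r_0$ is to be read.
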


Notice that we have the identity
$$
\,^t\mf{D}_{\mf{c}}\,\circ\,\mf{D}_{\mf{c}}(f)\,=\,
\nabla_h^\perp\cdot\left(\frac{1}{\mf{c}}\,\nabla_h\cdot\mf{D}_{\mf{c}}(f)\right)\,,
$$
where we used the notations $\div f$ and $\nabla\cdot f$ in an equivalent way.
We also remark here that, for $\mf{c}\equiv1$, this operator reduces to $(1/2)\Delta_h^2f$.

\begin{rem} \label{r:var_axis}
As already pointed out in \cite{G-SR_2006} (see also \cite{F-G-GV-N}), the limit equation is linear in the case of variable rotation
axis. Indeed, the limit motion is much more constrained in this situation, and correspondingly the kernel of the singular perturbation operator is smaller than in the
instance of constant axis.

Also, notice that having non-constant $\mf{c}$ makes variable coefficients appear in the limit equation.
\end{rem}

\section{A priori estimates} \label{s:bounds}

The present section is devoted to show uniform bounds for smooth solutions of our system, written in the general form \eqref{eq:NSK}.
Throughout this section we will suppose ${\rm Fr}$, ${\rm We}$ and ${\rm Ro}$ to be fixed. However, we will keep track of them
in the first paragraph: this is relevant in view of the analysis of Section \ref{s:singular}. 

Besides, the next computations will give complete justification to a technical assumption in \cite{F}
(see Remark \ref{r:anisotropic} below).

\subsection{Energy estimates} \label{ss:energy}

Suppose that $(\rho,u)$ is a smooth solution to system \eqref{eq:NSK} in $\R_+\times\Omega$,
related to the smooth initial datum $\bigl(\rho_0,u_0\bigr)$. We establish here energy estimates for $(\rho,u)$.

\begin{rem} \label{r:uniform}
The construction given in e.g. \cite{B-D-Zat} ensures the existence of smooth approximated solutions, which converge to a weak solution of our original system
and which are compatible with the uniform bounds given by the BD entropy structure of the equations.

Therefore, the estimates established here will be inherited by the family of weak solutions we are going to consider, see Subsection \ref{ss:uniform}.
\end{rem}

\subsubsection{Classical energy}

First of all, let us show the classical energy conservation.

\begin{prop} \label{p:E}
Let $(\rho,u)$ be a smooth solution to system \eqref{eq:NSK} in $\R_+\times\Omega$, with
initial datum $\bigl(\rho_0,u_0\bigr)$. Then, for all $t\in\R_+$, one has
$$
\frac{d}{dt}E[\rho,u](t)\,+\,\nu\int_\Omega\rho\,|Du|^2\,dx\,=\,0\,.
$$
\end{prop}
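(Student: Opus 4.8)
The plan is to derive the classical energy identity by the standard procedure: test the momentum equation against $u$, test a suitable function of the density against the mass equation, and combine. I would work formally, since the statement is about smooth solutions; the rigor needed for weak solutions is absorbed into the construction (Remark~\ref{r:uniform}).

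\medbreak

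\textbf{Step 1: the kinetic energy.} Take the inner product of the momentum equation in \eqref{eq:NSK} with $u$ and integrate over $\Omega$. Using the mass equation in the form $\d_t\rho+\div(\rho u)=0$, the first two terms combine into $\frac{d}{dt}\int_\Omega\frac12\rho|u|^2\,dx$ by the usual computation (write $\d_t(\rho u)\cdot u+\div(\rho u\otimes u)\cdot u=\d_t\bigl(\tfrac12\rho|u|^2\bigr)+\div\bigl(\tfrac12\rho|u|^2u\bigr)$ after using the continuity equation, and the divergence integrates to zero by the boundary condition $u^3_{|\d\Omega}=0$). The Coriolis term contributes $\frac{1}{\rm Ro}\int_\Omega \mf c(x^h)\,(e^3\times\rho u)\cdot u\,dx=0$ pointwise, since $e^3\times\rho u$ is orthogonal to $u$. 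The viscous term gives, after integrating by parts, $+\nu\int_\Omega \rho\,Du:\nabla u\,dx=\nu\int_\Omega\rho|Du|^2\,dx$ (the boundary terms vanish thanks to $u^3_{|\d\Omega}=0$ and $\bigl((Du)n\times n\bigr)_{|\d\Omega}=0$, and $Du:\nabla u=Du:Du$ by symmetry of $Du$).

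\medbreak

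\textbf{Step 2: the pressure and capillarity terms.} For the pressure term, integrate $\frac{1}{{\rm Fr}^2}\nabla\Pi(\rho)\cdot u$ by parts to get $-\frac{1}{{\rm Fr}^2}\int_\Omega \Pi(\rho)\div u\,dx$, then use $\div u=-\frac{1}{\rho}(\d_t\rho+u\cdot\nabla\rho)$ from the mass equation; since $h''=\Pi'/\rho$ (here I combine $h$ and $h_c$ into the internal energy associated to $\Pi=P+P_c$), one finds $\frac{1}{{\rm Fr}^2}\int_\Omega\frac{\Pi(\rho)}{\rho}(\d_t\rho+u\cdot\nabla\rho)\,dx=\frac{d}{dt}\int_\Omega\frac{1}{{\rm Fr}^2}\bigl(h(\rho)+h_c(\rho)\bigr)\,dx$, using once more the continuity equation to absorb the transport term into a perfect time derivative. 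For the capillarity term $-\frac{1}{\rm We}\rho\nabla\Delta\rho\cdot u$, the cleanest route is: $\rho\nabla\Delta\rho=\nabla\bigl(\rho\Delta\rho+\tfrac12|\nabla\rho|^2\bigr)-\div(\nabla\rho\otimes\nabla\rho)$, so testing against $u$ and integrating by parts converts it into $\int_\Omega\bigl(\rho\Delta\rho+\tfrac12|\nabla\rho|^2\bigr)\div u\,dx-\int_\Omega(\nabla\rho\otimes\nabla\rho):\nabla u\,dx$; then replacing $\div u$ and $\nabla u$ terms via the mass equation and integrating by parts in $x$ and in $t$ produces exactly $\frac{d}{dt}\int_\Omega\frac{1}{2\,\rm We}|\nabla\rho|^2\,dx$. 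The boundary condition $\d_3\rho_{|\d\Omega}=0$ together with $u^3_{|\d\Omega}=0$ kills all boundary contributions in these manipulations.

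\medbreak

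\textbf{Step 3: assemble.} Adding the identities from Steps~1--2 gives $\frac{d}{dt}E[\rho,u](t)+\nu\int_\Omega\rho|Du|^2\,dx=0$, which is the claim. The main obstacle — and the only place genuine care is needed — is the bookkeeping for the capillarity term: getting the algebraic identity for $\rho\nabla\Delta\rho$ right and checking that the combination with the continuity equation collapses to $\frac{d}{dt}\bigl(\frac{1}{2\rm We}\int|\nabla\rho|^2\bigr)$ without leftover terms. Everything else (Coriolis orthogonality, viscous positivity, pressure/internal-energy matching via $h''=\Pi'/\rho$) is routine, and the boundary terms are handled uniformly by the complete-slip conditions \eqref{eq:bc}.
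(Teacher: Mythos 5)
Your proposal is correct and follows the same strategy as the paper: multiply the momentum equation by $u$, exploit the orthogonality of the Coriolis force, combine with the mass equation, and use the complete-slip conditions for the boundary terms; for the pressure, pairing the mass equation with the internal energies defined by $h''=P'/\rho$, $h_c''=P_c'/\rho$ gives the time derivative, exactly as in the paper. One remark on the capillarity term: the direct route is
$$
-\frac{1}{\rm We}\int_\Omega\rho\,\nabla\Delta\rho\cdot u\,dx \,=\,\frac{1}{\rm We}\int_\Omega\Delta\rho\,\div(\rho u)\,dx\,=\,-\frac{1}{\rm We}\int_\Omega\Delta\rho\,\d_t\rho\,dx\,=\,\frac{1}{2\,{\rm We}}\,\frac{d}{dt}\int_\Omega|\nabla\rho|^2\,dx\,,
$$
with boundary terms vanishing by $u\cdot n|_{\d\Omega}=0$ and $\d_3\rho|_{\d\Omega}=0$; your auxiliary identity $\rho\nabla\Delta\rho=\nabla(\rho\Delta\rho+\tfrac12|\nabla\rho|^2)-\div(\nabla\rho\otimes\nabla\rho)$ is correct, but the resulting piece $\int(\nabla\rho\otimes\nabla\rho):\nabla u$ is not controlled by the mass equation alone (which only sees $\div u$), so as stated the step ``replacing $\nabla u$ via the mass equation'' is imprecise — you would have to integrate back by parts, effectively undoing the decomposition.
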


\begin{proof}
First of all, we multiply the second relation in system \eqref{eq:NSK} by $u$: by use of the mass equation and the fact that 
$\mf{C}(\rho,u)$ is orthogonal to $u$, we arrive at the equality
$$ 
\frac{1}{2}\frac{d}{dt}\!\int_\Omega\left(\rho\,|u|^2+\frac{1}{{\rm We}}\,|\nabla\rho|^2\right)dx\,+\,
\frac{1}{{\rm Fr}^2}\int_\Omega\bigl(P'(\rho)+P_c'(\rho)\bigr)\nabla\rho\cdot u\,dx\,+\,
\nu\!\int_\Omega\rho\,Du:\nabla u\,dx\,=\,0\,.
$$ 
On the one hand, we have the identity $Du:\nabla u=|Du|^2$; on the other hand, multiplying the equation for
$\rho$ by $h'(\rho)/{\rm Fr}^2$ gives
$$
\frac{1}{{\rm Fr}^2}\int_\Omega P'(\rho)\,\nabla\rho\cdot u\,dx\,=\,
\frac{1}{{\rm Fr}^2}\,\frac{d}{dt}\int_\Omega h(\rho)\,dx\,.
$$
Notice that an analogous equality holds true also for the cold part $P_c$ of the pressure. 

Putting the obtained relations into the previous one concludes the proof of the proposition.
\end{proof}

From the previous energy conservation, we infer the following bounds.
\begin{coroll} \label{c:E}
Let $\g_c=2$. Let $(\rho,u)$ be a smooth solution to system \eqref{eq:NSK} in $\R_+\times\Omega$, with
initial datum $\bigl(\rho_0,u_0\bigr)$, and assume that $E[\rho_0,u_0]<+\infty$.
Then one has the following properties:
$$
\sqrt{\rho}\,u\;,\;\frac{1}{\sqrt{\rm We}}\,\nabla\rho\quad\in\;L^\infty\bigl(\R_+;L^2(\Omega)\bigr)\qquad\mbox{ and }\qquad
\sqrt{\rho}\,Du\;\in\;L^2\bigl(\R_+;L^2(\Omega)\bigr)\,.
$$
Moreover, we also get
$$
\frac{1}{\rm Fr}\left(\rho\,-\,1\right)\,\in\,L^\infty\bigl(\R_+;L^\g(\Omega)\bigr)\qquad\mbox{ and }\qquad
\frac{1}{\rm Fr}\left(\frac{1}{\rho}\,-\,1\right)\,\in\,L^\infty\bigl(\R_+;L^2(\Omega)\bigr)\,.
$$
\end{coroll}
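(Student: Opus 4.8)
The plan is to read off all four stated bounds directly from the energy identity of Proposition~\ref{p:E}, which upon integration in time gives, for every $t\in\R_+$,
$$
E[\rho,u](t)\,+\,\nu\int_0^t\!\!\int_\Omega\rho\,|Du|^2\,dx\,d\tau\,=\,E[\rho_0,u_0]\,<\,+\infty\,.
$$
Since each of the four summands in the definition \eqref{eq:def_E_2} of $E[\rho,u]$ is nonnegative (recall $h,h_c\geq0$, which follows from $h(1)=h'(1)=0$ together with convexity $h''=\rho^{\g-2}>0$, and similarly for $h_c$ with $\g_c=2$), each one is individually bounded by $E[\rho_0,u_0]$ uniformly in $t$, and the dissipation term is bounded in $L^1(\R_+)$. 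This immediately yields $\sqrt\rho\,u\in L^\infty(\R_+;L^2)$, ${\rm We}^{-1/2}\nabla\rho\in L^\infty(\R_+;L^2)$, and $\sqrt\rho\,Du\in L^2(\R_+;L^2)$, which are the first three claims.

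For the pressure-type bounds, the idea is to compare the internal energy densities $h(\rho)$ and $h_c(\rho)$ with the model quantities $|\rho-1|^\g$ and $|1/\rho-1|^2$ respectively. First I would note that $h(\rho)\geq c_\g\,|\rho-1|^\g$ for some constant $c_\g>0$ depending only on $\g\in(1,2]$: near $\rho=1$ this is Taylor's formula with $h''(1)=1$, while for $\rho$ far from $1$ (both $\rho\to0$ and $\rho\to+\infty$) one checks the asymptotics of $h$ from its explicit form obtained by integrating $h''=\rho^{\g-2}$ twice. Combining this with the uniform bound on $\int_\Omega h(\rho)\,dx/{\rm Fr}^2$ gives ${\rm Fr}^{-1}(\rho-1)\in L^\infty(\R_+;L^\g(\Omega))$. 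For the cold part, with $\g_c=2$ we have $h_c''(\rho)=\rho^{-4}$, so $h_c$ is explicit and one checks the lower bound $h_c(\rho)\geq c\,|1/\rho-1|^2$ (the substitution $s=1/\rho$ turns $h_c$ into something comparable to $(s-1)^2$ near $s=1$ and with the right growth as $s\to+\infty$, i.e. $\rho\to0$, and as $s\to0$); the uniform control on $\int_\Omega h_c(\rho)\,dx/{\rm Fr}^2$ then gives ${\rm Fr}^{-1}(1/\rho-1)\in L^\infty(\R_+;L^2(\Omega))$.

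The only mildly delicate point — and the one I would treat most carefully — is the elementary analysis establishing the two-sided (or rather, lower) comparisons $h(\rho)\gtrsim|\rho-1|^\g$ and $h_c(\rho)\gtrsim|1/\rho-1|^2$ uniformly over all $\rho\in(0,+\infty)$, since these must hold both in the regime $\rho$ near $1$ (Taylor expansion, using the normalization $\Pi'(1)=1$) and in the degenerate regimes $\rho\to0^+$ and $\rho\to+\infty$. This is entirely routine one-variable calculus once the explicit antiderivatives are written down, and the restrictions $1<\g\leq2$ and $\g_c=2$ in the hypotheses are exactly what make the exponents match up so that the classical energy controls $L^\g$ and $L^2$ norms respectively. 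No new ideas are needed beyond Proposition~\ref{p:E}; everything else is bookkeeping with nonnegative quantities.
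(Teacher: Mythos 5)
Your plan (integrate the identity of Proposition~\ref{p:E} in time, then read each nonnegative summand of $E[\rho,u]$ separately) is exactly the paper's approach, and it delivers the first three bounds correctly. Your treatment of the cold pressure is also essentially the paper's: the paper states the pointwise inequality $\g_c(\g_c+1)\,h_c(\rho)\,-\,|1/\rho-1|^{\g_c}\geq 0$, which for $\g_c=2$ (so $h_c(\rho)=\tfrac13\rho+\tfrac16\rho^{-2}-\tfrac12$) reduces after simplification to $\rho+\rho^{-1}\geq2$, i.e.\ AM--GM.

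The gap is in the $L^\g$ bound for $\rho-1$. The asserted pointwise comparison $h(\rho)\geq c_\g\,|\rho-1|^\g$ is \emph{false} whenever $1<\g<2$, and the Taylor expansion you invoke as justification is precisely what refutes it: since $h(1)=h'(1)=0$ and $h''(1)=1$, one has $h(\rho)\sim\tfrac12(\rho-1)^2$ as $\rho\to1$, and $2>\g$ forces $h(\rho)/|\rho-1|^\g=O(|\rho-1|^{2-\g})\to0$, so no $c_\g>0$ works on any neighbourhood of $\rho=1$. (The inequality does hold in the borderline case $\g=2$, where $h(\rho)=\tfrac12(\rho-1)^2$ exactly.) In reality $h$ controls $(\rho-1)^2$ near $\rho=1$ and $|\rho-1|^\g$ only away from $\rho=1$, so a region decomposition is unavoidable; on the unbounded slab $\Omega=\R^2\times\,]0,1[\,$ the quadratic control near $\rho=1$ does not by itself yield an $L^\g$ bound when $\g<2$. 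The paper sidesteps this by quoting Lemma~2 of \cite{J-L-W}, which gives the two-exponent estimate $\|\rho-1\|^\g_{L^\infty_T(L^\g)}\leq C\bigl({\rm Fr}^\g+{\rm Fr}^2\bigr)$, whose very shape reflects the two regimes. You would need either to reproduce the content of that lemma or to give a correct argument replacing the false global pointwise inequality.
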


\begin{proof}
The first properties are immediate, after integrating the relation of Proposition \ref{p:E} in time. So, let us focus
on the density term.

For $1<\g\leq2$, Lemma 2 of \cite{J-L-W}, combined with Proposition \ref{p:E}, tells us
$$
\left\|\rho-1\right\|^\g_{L^\infty_T(L^\g)}\,\leq\,C\left({\rm Fr}^\g\,+\,{\rm Fr}^2\right)\,,
$$
which immediately gives the first property.

Let us consider negative powers of the density: for $\g_c=2$, it is easy to see that
$$
H_c(\rho)\,:=\,\g_c\,(\g_c+1)\,h_c(\rho)\,-\,\left|\frac{1}{\rho}\,-\,1\right|^{\g_c}\,\geq\,0
$$
for all $\rho\geq0$, and this concludes the proof of the corollary.
\end{proof}

\subsubsection{BD entropy} \label{sss:BD}

We want now to take advantage of the BD entropy structure of our system. Let us start with a lemma, which is the analogue of
Proposition 3.3 in \cite{F}. For the sake of completeness, we give the complete proof in Appendix \ref{app:BD}.
\begin{lemma} \label{l:F}
Let $(\rho,u)$ be a smooth solution to system \eqref{eq:NSK} in $\R_+\times\Omega$, related to the initial datum $\bigl(\rho_0,u_0\bigr)$.
Then there exists a ``universal'' constant $C>0$ such that, for all $t\in\R_+$, one has
\begin{eqnarray}
& & \hspace{-0.3cm}
\frac{1}{2}\!\!\int_\Omega\rho\,\left|u\,+\,\nu\,\nabla\log\rho\right|^2\,dx\,+\,
\frac{\nu}{{\rm We}}\!\!\int^t_0\!\!\int_\Omega\left|\nabla^2\rho\right|^2\,dx\,d\tau\,+ 
\,\frac{4\nu}{{\rm Fr}^2}\!\!\int^t_0\!\!\int_\Omega\Pi'(\rho)\,\left|\nabla\sqrt{\rho}\right|^2\,dx\,d\tau\,\leq \label{est:F} \\
& & \qquad\qquad\qquad\qquad\qquad\qquad
\leq\,C\bigl(F[\rho_0]+E[\rho_0,u_0]\bigr)\,+\,
\frac{\nu}{\rm Ro}\left|\int^t_0\!\!\int_\Omega\mf{c}(x^h)\,e^3\times u\cdot\nabla\rho\,dx\,d\tau\right|.  \nonumber
\end{eqnarray}
\end{lemma}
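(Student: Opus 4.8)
The plan is to derive the BD entropy inequality by introducing the effective velocity $v := u + \nu\nabla\log\rho$ and tracking how the energy estimate transforms. First I would recall that, since $\rho$ satisfies the mass equation, one has $\partial_t\nabla\log\rho = -\nabla\div u - \nabla(u\cdot\nabla\log\rho)$, and more usefully $\partial_t(\rho\nabla\log\rho) + \div(\rho u\otimes\nabla\log\rho) = -\div(\rho\,{}^t\nabla u)$ (a classical Bresch--Desjardins identity, valid up to manipulations using the continuity equation). Combined with the momentum equation, this shows that $v$ solves a system in which the viscous term $-\nu\div(\rho Du)$ partially converts into a genuinely elliptic contribution on $\rho$. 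Concretely, multiplying the momentum equation by $u + \nu\nabla\log\rho$ and using Proposition \ref{p:E} to cancel the pure-$u$ part, one is left with estimating the cross terms. The key algebraic point is that the combination of the pressure term and the Korteweg term, tested against $\nu\nabla\log\rho$, produces precisely the good nonnegative quantities $\frac{4\nu}{{\rm Fr}^2}\int\Pi'(\rho)|\nabla\sqrt\rho|^2$ (from $\nabla\Pi(\rho)\cdot\nabla\log\rho = \Pi'(\rho)|\nabla\rho|^2/\rho = 4\Pi'(\rho)|\nabla\sqrt\rho|^2$) and $\frac{\nu}{{\rm We}}\int|\nabla^2\rho|^2$ (from the capillarity term $-\rho\nabla\Delta\rho$ tested against $\nabla\log\rho$, after an integration by parts, the boundary terms vanishing by the slip conditions \eqref{eq:bc}).

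Next I would carry out the time integration. Integrating the resulting identity on $[0,t]$ and using $2^{-1}\int\rho|u+\nu\nabla\log\rho|^2 \le \int\rho|u|^2 + \nu^2\int\rho|\nabla\log\rho|^2 = 2E$-type-bound $+\ 2F$-type-bound on the initial data (after discarding the viscous dissipation $\nu\int_0^t\!\int\rho|Du|^2$ which has a favorable sign and using $E[\rho,u](t)\le E[\rho_0,u_0]$ from Proposition \ref{p:E}), one controls the left-hand side by $C(F[\rho_0] + E[\rho_0,u_0])$ plus the remaining uncontrolled term. That leftover term is exactly the contribution of the Coriolis operator $\mf{C}(\rho,u) = \mf{c}(x^h)e^3\times\rho u$ tested against $\nu\nabla\log\rho$: since $\mf{C}(\rho,u)$ is orthogonal to $u$ it disappeared from the classical energy balance, but it does \emph{not} vanish against $\nu\nabla\log\rho$, and it yields $\frac{\nu}{{\rm Ro}}\int_0^t\!\int \mf{c}(x^h)\,(e^3\times u)\cdot\nabla\rho\,dx\,d\tau$ (using $e^3\times\rho u\cdot\nabla\log\rho = \mf{c}\,(e^3\times u)\cdot\nabla\rho$ and symmetry of the cross product). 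Bounding it in absolute value gives the stated right-hand side.

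The main obstacle — and the reason the full proof is deferred to Appendix \ref{app:BD} — is making the formal manipulation of the term $\partial_t(\rho\nabla\log\rho)$ rigorous and keeping careful track of all commutator-type and lower-order terms generated when one substitutes the mass equation, particularly ensuring that every boundary integral produced by the integrations by parts (for the Korteweg term, for the viscous term, and for $\div(\rho\,{}^t\nabla u)$) genuinely vanishes under the complete slip boundary conditions \eqref{eq:bc} and the evenness/oddness symmetries of Remark \ref{r:period-bc}. A secondary point is that the argument is performed on smooth solutions (Remark \ref{r:uniform}), so one must also check that the identity $Du:\nabla(\nabla\log\rho)$ recombines correctly — using $\div(\rho Du)\cdot\nabla\log\rho$ integrated by parts and the structure of $Du$ — into the $\int|\nabla^2\rho|^2$ term rather than leaving an indefinite-sign remainder; this is where the precise coefficient $\nu/{\rm We}$ (as opposed to something larger) comes from, after absorbing part of the cross terms. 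Everything else is a routine, if lengthy, computation, which is why I would simply record the statement here and relegate the bookkeeping to the appendix.
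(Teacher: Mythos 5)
Your proposal is correct and follows essentially the same BD-entropy route as the paper's appendix proof: the paper tests the momentum equation against $\nu\nabla\rho/\rho=\nu\nabla\log\rho$, combines it with the Bresch--Desjardins--Lin identity for $\tfrac{d}{dt}\int\rho|\nabla\log\rho|^2$ (which is the same mass-equation consequence you invoke for $\partial_t(\rho\nabla\log\rho)$), and then repackages the result in terms of the effective velocity $u+\nu\nabla\log\rho$ before integrating in time and invoking Proposition \ref{p:E} — exactly the pressure, Korteweg and Coriolis bookkeeping you describe. Your framing starting directly from $v=u+\nu\nabla\log\rho$ is just the "top-down" version of the paper's "bottom-up" computation; the algebra, the role of the slip boundary conditions, and the identification of the Coriolis term as the only uncontrolled remainder all match.
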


Let us point out here that the last term on the left-hand side of the previous relation can be also written as
\begin{eqnarray}
\frac{4\,\nu}{{\rm Fr}^2}\int_\Omega\bigl(P'(\rho)+P'_c(\rho)\bigr)\left|\nabla\sqrt{\rho}\right|^2 & = & 
\frac{\nu}{{\rm Fr}^2}\int_\Omega\left(\rho^{\g-2}\,+\,\rho^{-\g_c-2}\right)\left|\nabla\rho\right|^2 \label{eq:P+P_c} \\
& = & \frac{C_\g\,\nu}{{\rm Fr}^2}\int_\Omega\left|\nabla\left(\rho^{\g/2}\right)\right|^2\,+\,
\frac{C_{\g_c}\,\nu}{{\rm Fr}^2}\int_\Omega\left|\nabla\left(\rho^{-\g_c/2}\right)\right|^2\,, \nonumber
\end{eqnarray}
for some positive constants $C_\g$ and $C_{\g_c}$. In particular, in our case $\g_c=2$, $C_{\g_c}=1$.

Next, let us give now some estimates for the density (again, see the proof in Appendix \ref{app:BD}).
\begin{lemma} \label{l:rho_BD}
There exists a ``universal'' constant $C>0$ such that
$$
\|\rho-1\|_{L^\infty_t(L^2)}\,\leq\,C\left({\rm Fr}\,+\,\bigl(1-\mathds{1}_2(\g)\bigr)\,\sqrt{\rm We}\right)\,,
$$
where we have set $\mathds{1}_2(\g)=1$ if $\g=2$ and $\mathds{1}_2(\g)=0$ otherwise.

Moreover, for any $0<\delta\leq1/2$ and any $1\leq p\leq4/(1+2\delta)$ one has
\begin{eqnarray*}
\hspace{-1.5cm}
\|\rho-1\|^p_{L^p_t(L^\infty)} & \leq & C_p\biggl(\left({\rm Fr}+\bigl(1-\mathds{1}_2(\g)\bigr)\sqrt{\rm We}\right)\,t\,+ \\
& & \qquad\qquad +\,({\rm We})^{p(3-2\delta)/4}\;\nu^{-1/q}\;t^{1-1/q}\,
\left(\frac{\nu}{\rm We}\,\left\|\nabla^2\rho\right\|^2_{L^2_t(L^2)}\right)^{1/q}\biggr)\,,
\end{eqnarray*}
where we have defined $q\,:=\,4/\bigl((1+2\delta)p\bigr)\,\in\,[1,4/(1+2\delta)]$,
and the constant $C_p$ depends also on the value of $p$.
\end{lemma}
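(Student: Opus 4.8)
The plan is to combine the bounds already available from the classical energy (Corollary \ref{c:E}) and the BD entropy (Lemma \ref{l:F}, together with the reformulation \eqref{eq:P+P_c}) with some elementary interpolation and Gagliardo--Nirenberg inequalities in the slab $\Omega=\R^2\times\mbb{T}^1$. For the first bound I would distinguish the two cases according to whether $\g=2$ or $1<\g<2$. When $\g=2$ the classical energy already controls $\|\rho-1\|_{L^\infty_t(L^2)}$ by $C\,{\rm Fr}$, since $h(\rho)\sim|\rho-1|^2$ near $\rho=1$ and, for large $\rho$, $|\rho-1|^2\lesssim h(\rho)$; hence the term $\bigl(1-\mathds 1_2(\g)\bigr)\sqrt{\rm We}$ is absent. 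When $1<\g<2$ one only gets $L^\g$ control of $\rho-1$ from the classical energy, so the $L^2$ bound must instead come from the BD entropy: the quantity $\int_\Omega|\nabla(\rho^{\g/2})|^2$ in \eqref{eq:P+P_c} together with the $L^\g$ bound on $\rho^{\g/2}-1$ controls $\rho^{\g/2}-1$ in $H^1$, hence in every $L^p(\Omega)$, $p<\infty$ (Sobolev in dimension $3$), and in particular one recovers $\rho-1$ in $L^2$. Tracking the ${\rm We}$-dependence of the BD estimate \eqref{est:F} (the term $\frac{\nu}{\rm We}\int|\nabla^2\rho|^2$ is bounded by $C(1+T)$ after absorbing the Coriolis term as in the proof of Lemma \ref{l:F}) produces the factor $\sqrt{\rm We}$.

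For the $L^p_t(L^\infty_x)$ bound I would use a Gagliardo--Nirenberg inequality of the form
$$
\|\rho-1\|_{L^\infty(\Omega)}\;\lesssim\;\|\rho-1\|_{L^2(\Omega)}^{\theta}\,\|\nabla^2\rho\|_{L^2(\Omega)}^{1-\theta}\,+\,\|\rho-1\|_{L^2(\Omega)}
$$
valid in dimension $3$ with the appropriate exponent $\theta$ (one has $H^s(\R^3)\hookrightarrow L^\infty$ for $s>3/2$, and interpolating $L^\infty$ between $L^2$ and $H^2$ gives $\theta$ related to the parameter $\delta$, namely $1-\theta=(3+2\delta)/4$ up to the bookkeeping in the statement). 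Raising to the power $p$, integrating in time on $[0,t]$, and applying H\"older in time with the conjugate exponents $q=4/((1+2\delta)p)$ and $q'$ to the factor $\|\nabla^2\rho\|_{L^2}^{p(1-\theta)}=\|\nabla^2\rho\|_{L^2}^{2/q}$ isolates $\left(\int_0^t\|\nabla^2\rho\|_{L^2}^2\right)^{1/q}$, which is exactly $\bigl(\frac{\nu}{\rm We}\|\nabla^2\rho\|_{L^2_t(L^2)}^2\bigr)^{1/q}$ up to the displayed powers of ${\rm We}$ and $\nu$. The first term on the right of the claimed inequality comes from the lower-order piece $\|\rho-1\|_{L^2}^p$ integrated over $[0,t]$, using the first part of the lemma; and the requirement $p\le 4/(1+2\delta)$ is precisely what makes $q\ge 1$, so that H\"older in time is legitimate.

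The main obstacle I expect is purely technical bookkeeping: matching the exact powers of ${\rm We}$, $\nu$ and $t$ in the statement, which forces one to be careful about (i) which $\rho$-norms carry a $\sqrt{\rm We}$ or $({\rm We})^{-1/2}$ in Corollary \ref{c:E} and Lemma \ref{l:F}, (ii) the homogeneity of the Gagliardo--Nirenberg inequality, since the inhomogeneous term $\|\rho-1\|_{L^2}$ must be treated separately from the genuinely interpolated term, and (iii) the correct conjugate exponent in the H\"older step. Conceptually there is no difficulty beyond the standard interpolation machinery; I would relegate the full computation to the appendix (as the authors indicate), stating here only that it follows by combining the energy and BD bounds with Gagliardo--Nirenberg and H\"older inequalities.
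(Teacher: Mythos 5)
Your overall scheme (combine energy bounds with Gagliardo--Nirenberg and H\"older in time) is the right one, but two of your key moves diverge from what actually works, and one of them is circular.

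For the first bound, when $1<\g<2$ you propose to route through the BD entropy (the $\int|\nabla(\rho^{\g/2})|^2$ term in \eqref{eq:P+P_c}, with the Coriolis term ``absorbed as in the proof of Lemma \ref{l:F}''). This is circular: the Coriolis term in \eqref{est:F} is absorbed only in Proposition \ref{p:F}, via Lemma \ref{l:rot}, whose proof already invokes Lemma \ref{l:rho_BD}. Moreover, it is unnecessary: the \emph{classical} energy already controls $\nabla\rho$, since $\tfrac{1}{2\,{\rm We}}|\nabla\rho|^2$ appears in $E[\rho,u]$, so $\|\nabla\rho\|_{L^\infty_t(L^2)}\lesssim\sqrt{\rm We}$ by Corollary \ref{c:E}. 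The paper then simply applies the Poincar\'e-type inequality of Lemma \ref{l:density}(i), $\|f\|_{L^2}\lesssim\|f\|_{L^\g}+\|\nabla f\|_{L^2}$, to $f=\rho-1$, getting $\|\rho-1\|_{L^\infty_t(L^2)}\lesssim\|\rho-1\|_{L^\infty_t(L^\g)}+\|\nabla\rho\|_{L^\infty_t(L^2)}\lesssim{\rm Fr}+\sqrt{\rm We}$, and for $\g=2$ the first term already dominates, so the $\sqrt{\rm We}$ piece carries the $(1-\mathds{1}_2(\g))$ prefactor. No BD entropy is touched.

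For the second bound, the specific Gagliardo--Nirenberg inequality you propose, $\|\rho-1\|_{L^\infty}\lesssim\|\rho-1\|_{L^2}^{\theta}\|\nabla^2\rho\|_{L^2}^{1-\theta}+\|\rho-1\|_{L^2}$, interpolates between $L^2$ and $\dot H^2$, whereas the paper uses Lemma \ref{l:density}(ii), namely $\|f\|_{L^\infty}\lesssim\|f\|_{L^p}+\|\nabla f\|_{L^2}^{(1/2)-\delta}\|\nabla^2 f\|_{L^2}^{(1/2)+\delta}$, which interpolates between $\dot H^1$ and $\dot H^2$. The distinction is not mere bookkeeping: it is the factor $\|\nabla\rho\|_{L^\infty_t(L^2)}^{p(1/2-\delta)}\lesssim({\rm We})^{\cdot}$, furnished by the classical energy, that produces the explicit ${\rm We}$-power in the statement. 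Your version would instead give a factor $\|\rho-1\|_{L^\infty_t(L^2)}^{p\theta}$, which for $\g=2$ is a power of ${\rm Fr}$ alone and cannot reproduce the claimed ${\rm We}$-dependence. The subsequent H\"older-in-time step (exponents $q=4/((1+2\delta)p)$ and $q'$ on $\|\nabla^2\rho\|_{L^2}^{2/q}$) and the resulting constraint $p\leq4/(1+2\delta)$ are identical to the paper's, so that part of your plan is on target; it is the choice of interpolation endpoint and the source of the $L^2$ control that you would need to correct.
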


We are now ready to control the Coriolis term in the estimates of Lemma \ref{l:F}. We will give two different
types of estimates: the first one is useful in order to justify Definition \ref{d:weak} of weak solutions.
The second kind of inequalities, instead, will be relevant in studying the singular limit problem (see Section
\ref{s:singular}). Again, the proof is postponed to Appendix \ref{app:BD}.

\begin{lemma} \label{l:rot}
\begin{itemize}
\item[(i)] There exists a positive constant $C$, just depending on $E[\rho_0,u_0]$, such that
$$
\frac{\nu}{\rm Ro}\left|\int^t_0\!\!\int_\Omega\mf{c}(x^h)\,e^3\times u\cdot\nabla\rho\,dx\,d\tau\right|\,\leq\,
\frac{C}{\rm Ro}\,\int^t_0\bigl(F[\rho](\tau)\bigr)^{1/2}\,d\tau\,.
$$
\item[(ii)] Moreover, one has the following estimate: for any $1<\g\leq2$,
\begin{eqnarray*}
& & \hspace{-1cm}
\frac{\nu}{\rm Ro}\left|\int^t_0\!\!\int_\Omega\mf{c}(x^h)\,e^3\times u\cdot\nabla\rho\,dx\,d\tau\right|\;\leq\;
C\,\nu\,(1+t)\;\frac{{\rm Fr}+\sqrt{{\rm We}}}{\rm Ro}\,\left(1+{\rm Fr}+\sqrt{{\rm We}}\right)^{\!1/2}\,+ \\
& & \qquad\qquad\qquad\qquad\qquad
+\,C\,\nu\,(1+t)\,({\rm We})^{3/5}\,\left(\frac{{\rm Fr}+\sqrt{{\rm We}}}{\rm Ro}\right)^{\!8/5}\,+\,
\frac{3}{4}\frac{\nu}{{\rm We}}\,\left\|\nabla^2\rho\right\|^2_{L^2_t(L^2)}\,.
\end{eqnarray*}
Alternatively, in the particular instance $\g=2$, one
can get the different bound
\begin{eqnarray*}
& & \hspace{-1cm}
\frac{\nu}{\rm Ro}\left|\int^t_0\!\!\int_\Omega\mf{c}(x^h)\,e^3\times u\cdot\nabla\rho\,dx\,d\tau\right|\;\leq\;
C\,\nu\,t\,\left(\frac{{\rm Fr}}{{\rm Ro}}\right)^{\!2}\,+\,C\,\nu\,\sqrt{t}\,\frac{\rm Fr}{\rm Ro}\,\bigl(1+{\rm Fr}\bigr)^{\!1/2}\,+ \\
& & \qquad\qquad +\,C\,\nu\,(1+t)\,({\rm We})^{3/5}\,\left(\frac{{\rm Fr}}{\rm Ro}\right)^{\!8/5}\,+\,
\frac{1}{2}\,\frac{\nu}{{\rm Fr}^2}\,\left\|\nabla\rho\right\|^2_{L^2_t(L^2)}\,+\,
\frac{3}{4}\,\frac{\nu}{\rm We}\,\left\|\nabla^2\rho\right\|^2_{L^2_t(L^2)}\,.
\end{eqnarray*}
\end{itemize}
\end{lemma}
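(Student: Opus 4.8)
The plan is to estimate the Coriolis contribution appearing on the right-hand side of Lemma~\ref{l:F} keeping the factor $\nu/{\rm Ro}$ explicit, using only the bounds furnished by the classical energy (Corollary~\ref{c:E}) and by the density estimates of Lemma~\ref{l:rho_BD}, together with Sobolev and Gagliardo--Nirenberg interpolation. Both parts rest on the elementary identity $\nabla\rho = 2\,\sqrt\rho\,\nabla\sqrt\rho$, on the pointwise bound $|e^3\times u| = |u^h|\le|u|$, and on the decomposition $u = \rho^{-1/2}\,(\sqrt\rho\,u)$ with $\rho^{-1/2} = 1 + (\rho^{-1/2}-1)$: since $|\rho^{-1/2}-1|\le|\rho^{-1}-1|$, Corollary~\ref{c:E} gives $\|\rho^{-1/2}-1\|_{L^\infty_t(L^2)}\le C\,{\rm Fr}$.

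Part~(i) is then immediate. Cauchy--Schwarz yields $\bigl|\int_\Omega\mf{c}\,(e^3\times u)\cdot\nabla\rho\,dx\bigr|\le 2\,\|\mf{c}\|_{L^\infty}\,\|\sqrt\rho\,u\|_{L^2}\,\|\nabla\sqrt\rho\|_{L^2}$; the first factor is controlled by $E[\rho_0,u_0]$ through Corollary~\ref{c:E}, while $\|\nabla\sqrt\rho\|_{L^2}^2 = (2\nu^2)^{-1}F[\rho]$ by the very definition~\eqref{eq:def_F}. Multiplying by $\nu/{\rm Ro}$, integrating in time and absorbing $\|\mf c\|_{L^\infty}$ (a fixed constant) into $C$ gives the claim.

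Part~(ii) requires more care, because one must not leave $\int_0^t(F[\rho])^{1/2}\,d\tau$ on the right-hand side: Lemma~\ref{l:F} is precisely the inequality one is closing, and in the regime of Section~\ref{s:singular} that term would moreover be multiplied by the diverging factor $1/{\rm Ro}$. Hence every contribution must be either a power of $({\rm Fr},{\rm We},{\rm Ro},t)$ controlled by the data, or a small multiple of $\tfrac{\nu}{{\rm We}}\|\nabla^2\rho\|^2_{L^2_t(L^2)}$ (and, when $\g=2$, also of $\tfrac{\nu}{{\rm Fr}^2}\|\nabla\rho\|^2_{L^2_t(L^2)}$, which for $\g=2$ sits on the left of Lemma~\ref{l:F} through~\eqref{eq:P+P_c}), to be reabsorbed. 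With the splitting above, the ``$1$''-part of $u$ gives $\int|\sqrt\rho\,u|\,|\nabla\rho|\le\|\sqrt\rho\,u\|_{L^2}\|\nabla\rho\|_{L^2}\le C\sqrt{{\rm We}}$ pointwise in time by Corollary~\ref{c:E}; multiplying by $\nu/{\rm Ro}$ and integrating produces the first summand of the statement (for $\g=2$ one instead applies Young's inequality to obtain $\tfrac14\tfrac{\nu}{{\rm Fr}^2}\|\nabla\rho\|_{L^2}^2 + C\,\nu\,({\rm Fr}/{\rm Ro})^2$, the first term being absorbable).

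The remainder term $\int|\sqrt\rho\,u|\,|\rho^{-1/2}-1|\,|\nabla\rho|$ is then estimated by H\"older's inequality with $\sqrt\rho\,u$ in $L^2$ and the other two factors in intermediate Lebesgue spaces, interpolated via the three-dimensional embedding $H^1(\Omega)\hookrightarrow L^6(\Omega)$; Lemma~\ref{l:rho_BD}, which already delivers $\|\rho-1\|_{L^p_t(L^\infty)}$ with $\tfrac{\nu}{{\rm We}}\|\nabla^2\rho\|^2_{L^2_t(L^2)}$ appearing to a fractional power, is invoked here to dispose of the powers of $\rho$, and a concluding Young inequality, with conjugate exponents $8/5$ and $8/3$, splits the resulting product into the $\nu(1+t)({\rm We})^{3/5}\bigl(({\rm Fr}+\sqrt{{\rm We}})/{\rm Ro}\bigr)^{8/5}$ term and a small multiple of $\tfrac{\nu}{{\rm We}}\|\nabla^2\rho\|^2_{L^2_t(L^2)}$. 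I expect this bookkeeping to be the main obstacle: the velocity is controlled only through the weighted norm $\|\sqrt\rho\,u\|_{L^2}$, so every inequality must be routed through it and through density norms, and one must take care that the chain of interpolations does not reintroduce $F[\rho]$ or $\|\nabla(\rho^{-1})\|_{L^2}$ — which both live on the left-hand side of Lemma~\ref{l:F} — without an absorbable small constant. The $\g=2$ variant follows the same scheme, with the extra quantity $\tfrac{\nu}{{\rm Fr}^2}\|\nabla\rho\|^2_{L^2_t(L^2)}$ available for absorption.
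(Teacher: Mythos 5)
Your part (i) is correct and matches the paper: writing $e^3\times u\cdot\nabla\rho = e^3\times(\sqrt\rho\,u)\cdot\nabla\sqrt\rho$ and applying Cauchy--Schwarz is exactly what is done.

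For part (ii), your starting decomposition $u = \sqrt\rho\,u + (\rho^{-1/2}-1)\sqrt\rho\,u$ coincides with formula \eqref{eq:rot_parts} in the paper, but from that point on your strategy diverges and, more importantly, it is one the author explicitly abandons. The sentence immediately preceding the proof of Lemma \ref{l:rot} reads ``Notice that we are not able to exploit the presence of the cold pressure term at this level,'' and there is a (commented-out) remark recording that the direct route --- pairing $\sqrt\rho\,u \in L^\infty_t(L^2)$ with $(\rho^{-1/2}-1)\nabla\rho$ estimated in $L^1_t(L^2)$ via $L^2_t(L^4)$ norms, interpolation, and Young --- was tried ``even in the simplest case $\g=\g_c=2$'' without success. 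Your proposal is precisely this attempt: it routes the factor $\rho^{-1/2}-1$ through the cold-pressure controls $\|\rho^{-1}-1\|_{L^\infty_t(L^2)}$ and $\|\nabla(\rho^{-1})\|_{L^2_t(L^2)}$ together with Sobolev/Gagliardo--Nirenberg, and then hopes that a concluding Young inequality will absorb powers of $\|\nabla^2\rho\|$ and $\|\nabla(\rho^{-1})\|$ into the left-hand side of the BD inequality. You do flag this absorption as ``the main obstacle'' but give no evidence that the exponent bookkeeping closes, and the specific shape of the claimed bound (the ${\rm We}^{3/5}$ and $8/5$ exponents, the $3/4$ in front of $\nu\,{\rm We}^{-1}\|\nabla^2\rho\|^2$) is not reproduced by your chain of inequalities.

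The paper's actual mechanism is quite different. After the split \eqref{eq:rot_parts}, the second integral is \emph{integrated by parts}. This serves three purposes at once. First, it replaces the velocity by the third component of the vorticity, $\omega^3$: instead of estimating $u$ (only available weighted, in $L^\infty_t(L^2)$), one estimates $\sqrt\rho\,Du\in L^2_t(L^2)$, which is much better behaved in time. Second, and crucially, it converts the \emph{negative}-power factor $\rho^{-1/2}-1$ into the \emph{positive}-power factor $\sqrt\rho-1$; the latter is bounded pointwise by $|\rho-1|$, for which Lemma \ref{l:rho_BD} gives an $L^p_t(L^\infty)$ bound via $\nabla^2\rho\in L^2_t(L^2)$. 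No analogous $L^p_t(L^\infty)$ control exists for $\rho^{-1}-1$: the cold pressure only gives $L^\infty_t(L^2)$ and $L^2_t(H^1)$, hence at best $L^2_t(L^6)$ in three dimensions, and this is what prevents your direct H\"older scheme from closing at the stated constants. Third, the integration by parts produces a copy of the original Coriolis integral with a fractional coefficient, so one can \emph{solve} for it; this self-recurrence (together with the appearance of $\nabla^\perp_h\mf{c}$, which is where the $W^{1,\infty}$ hypothesis on $\mf{c}$ is used) is what yields the explicit formula \eqref{eq:rot_control} and hence the precise numerical factors ($3/2$, then $3/4$ and $8/5$) appearing in the statement. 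Without the integration by parts, none of these structural gains are available, and the resulting estimate neither closes in the same way nor produces the stated inequalities.
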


By the previous lemmas, we easily infer the BD entropy estimate for our system.
\begin{prop} \label{p:F}
Let $(\rho,u)$ be a smooth solution to system \eqref{eq:NSK} in $\R_+\times\Omega$, related to the initial
datum $\bigl(\rho_0,u_0\bigr)$.

\begin{itemize}
\item[(i)] There exists a constant $C>0$, just depending on $E[\rho_0,u_0]$ and $F[\rho_0]$, such that, for all $T\in\R_+$
fixed, one has
$$
\hspace{-0.5cm}
\sup_{[0,T]}F[\rho]\,+\,\frac{\nu}{{\rm We}}\int^T_0\!\!\int_\Omega\left|\nabla^2\rho\right|^2\,dx\,dt\,+\,
\frac{\nu}{{\rm Fr}^2}\int^T_0\!\!\int_\Omega\Pi'(\rho)\,\left|\nabla\sqrt{\rho}\right|^2\,dx\,dt\,\leq\,
C+C\left(\frac{T}{\rm Ro}\right)^{\!\!2}\,.
$$
\item[(ii)] Alternatively, for any $1<\g\leq2$ and any $T\in\R_+$ fixed, we have the estimate
\begin{eqnarray*}
& & \hspace{-2cm} \sup_{[0,T]}F[\rho]\,+\,\frac{\nu}{{\rm We}}\int^T_0\!\!\int_\Omega\left|\nabla^2\rho\right|^2\,dx\,dt\,+\,
\frac{\nu}{{\rm Fr}^2}\int^T_0\!\!\int_\Omega\Pi'(\rho)\,\left|\nabla\sqrt{\rho}\right|^2\,dx\,dt\,\leq \\
& & \qquad\qquad\qquad\qquad\qquad\qquad\qquad
\leq\,C\,+\,C_1\,\nu\,(1+T)\,\Theta\,+\,C_2\,\nu\,(1+T)\,\Theta^{8/5}\,,
\end{eqnarray*}
where $\Theta\,=\,\Theta({\rm Fr},{\rm Ro},{\rm We})\,=\,\left({\rm Fr}+\sqrt{{\rm We}}\right)\!/\,{\rm Ro}$ 
and the positive constants $C_1$ and $C_2$ are uniformly bounded for $({\rm Fr},{\rm We})$ varying in some compact set
$[0,\oline{\rm Fr}]\times[0,\oline{\rm We}]$.

In the case $\g=2$, one can derive also an analogous inequality, with the right-hand side replaced by the quantity
$$
C\,+\,C\,\nu\,T\,\wtilde{\Theta}^2\,+\,C_1\,\nu\,\sqrt{T}\,\wtilde{\Theta}\,+\,C_2\,\nu\,(1+T)\,\wtilde{\Theta}^{8/5}\,,
$$
where $C_1$ and $C_2$ have the same property as above, and we have defined $\wtilde{\Theta}\,:=\,{\rm Fr}/{\rm Ro}$.
\end{itemize}
\end{prop}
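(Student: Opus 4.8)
The plan is to derive Proposition \ref{p:F} simply by inserting the Coriolis bounds of Lemma \ref{l:rot} into the BD entropy inequality \eqref{est:F} of Lemma \ref{l:F}, and using the classical energy control of Proposition \ref{p:E}. A preliminary observation is needed, because \eqref{est:F} bounds the quantity $\tfrac12\int_\Omega\rho\,|u+\nu\nabla\log\rho|^2\,dx$ rather than $F[\rho]$ itself: writing $\nu\sqrt{\rho}\,\nabla\log\rho=\sqrt{\rho}\,(u+\nu\nabla\log\rho)-\sqrt{\rho}\,u$ and applying the triangle inequality together with \eqref{eq:def_F}, I would first record the bound $F[\rho](t)\le\int_\Omega\rho\,|u+\nu\nabla\log\rho|^2\,dx+\int_\Omega\rho\,|u|^2\,dx$, where the last term is $\le 2\,E[\rho_0,u_0]$ for every $t$ by Proposition \ref{p:E}. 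Introducing the combined functional
$$G(t):=F[\rho](t)+\frac{2\nu}{{\rm We}}\int_0^t\!\!\int_\Omega|\nabla^2\rho|^2\,dx\,d\tau+\frac{8\nu}{{\rm Fr}^2}\int_0^t\!\!\int_\Omega\Pi'(\rho)\,|\nabla\sqrt{\rho}|^2\,dx\,d\tau\,,$$
multiplying \eqref{est:F} by $2$, using the previous bound and adding the control on $\int_\Omega\rho\,|u|^2$, one obtains a closed inequality of the form $G(t)\le C(F[\rho_0]+E[\rho_0,u_0])+\tfrac{2\nu}{{\rm Ro}}\bigl|\int_0^t\!\int_\Omega\mf{c}(x^h)\,e^3\times u\cdot\nabla\rho\,dx\,d\tau\bigr|$, all the discarded terms being nonnegative and $F[\rho]\le G$ holding pointwise in time.

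For part (i), I would bound the Coriolis term by Lemma \ref{l:rot}(i) and use $F[\rho]\le G$, obtaining the nonlinear integral inequality $G(t)\le C+\tfrac{C}{{\rm Ro}}\int_0^t G(\tau)^{1/2}\,d\tau\le C+\tfrac{Ct}{{\rm Ro}}(\sup_{[0,t]}G)^{1/2}$. Passing to $y:=(\sup_{[0,T]}G)^{1/2}$ gives $y^2\le C+C({T}/{{\rm Ro}})\,y$, and a Young inequality absorbs the linear term to leave $\sup_{[0,T]}G\le C+C(T/{\rm Ro})^2$; recalling the definition of $G$ this is exactly (i). For part (ii) the situation is even simpler, since with Lemma \ref{l:rot}(ii) in place no $F[\rho]$- (or $G$-) dependent term survives on the right, so no Gronwall argument is needed: I would only have to check that the ``bad'' dissipative remainders on the right-hand side of Lemma \ref{l:rot}(ii) — the term $\tfrac34\,\tfrac{\nu}{{\rm We}}\|\nabla^2\rho\|^2_{L^2_t(L^2)}$ in general, and in the case $\g=2$ also the term $\tfrac12\,\tfrac{\nu}{{\rm Fr}^2}\|\nabla\rho\|^2_{L^2_t(L^2)}$ — can be moved to the left and absorbed (using for the second one the identity \eqref{eq:P+P_c}, which for $\g=\g_c=2$ gives $\Pi'(\rho)|\nabla\sqrt{\rho}|^2=\tfrac14(1+\rho^{-4})|\nabla\rho|^2\ge\tfrac14|\nabla\rho|^2$), and that the surviving multiplicative factors $(1+{\rm Fr}+\sqrt{{\rm We}})^{1/2}$ and $({\rm We})^{3/5}$ stay bounded as $({\rm Fr},{\rm We})$ ranges in a fixed compact set $[0,\oline{\rm Fr}]\times[0,\oline{\rm We}]$, so that they are absorbed into the constants $C_1,C_2$; setting $\Theta=({\rm Fr}+\sqrt{{\rm We}})/{\rm Ro}$ and $\wtilde{\Theta}={\rm Fr}/{\rm Ro}$ then reproduces the announced right-hand sides.

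Since Lemmas \ref{l:F} and \ref{l:rot} already contain all the substantial analysis, I do not expect any genuine obstacle here: the statement is essentially bookkeeping. The only steps deserving a minimum of care are the nonlinear Gronwall argument in part (i) — which is why I would route everything through the single functional $G$, so that the $\int_0^t(\cdot)^{1/2}$ term closes off cleanly — and, in part (ii), keeping track of which remainder terms are genuinely absorbable (their coefficient being strictly smaller than the one available on the left) and which must instead be estimated purely in terms of the scaling parameters.
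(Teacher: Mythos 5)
Your proposal is correct and follows essentially the same route as the paper: control the first term in \eqref{est:F} in terms of $F[\rho]$ minus a multiple of $\int_\Omega\rho|u|^2$, bound the Coriolis term by Lemma~\ref{l:rot}, and for part~(i) close the resulting estimate via $\int_0^t F^{1/2}\le t(\sup F)^{1/2}$ followed by Young's inequality (rather than a crude Gronwall argument, which would give exponential growth). Wrapping the whole left-hand side into the auxiliary functional $G$ is a cosmetic repackaging that changes nothing in substance; the handling of part~(ii), including the absorption of the dissipative remainders for $\gamma=2$ and the observation that the $({\rm Fr},{\rm We})$-dependent prefactors stay bounded on compact sets, matches the paper's argument.
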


\begin{proof}
Our starting point is inequality \eqref{est:F}: we estimate from below the first term in the left-hand side by
$$
\frac{1}{2}\int_\Omega\rho(t)\,\left|u(t)\,+\,\nu\,\nabla\log\rho(t)\right|^2\,dx\,\geq\,F[\rho](t)\,-\,
\frac{1}{2}\int_\Omega\rho(t)\,|u(t)|^2\,dx\,,
$$
where the last term can be clearly moved on the right-hand side of \eqref{est:F} and controlled by $E[\rho_0,u_0]$
(recall Proposition \ref{p:E}).

Let us now consider the rotating term, and bound it by the first inequality in Lemma \ref{l:rot}. Notice that, at this point, one could use
Young inequality to get $F^{1/2}\leq1+F$ and then apply Gronwall lemma: this would give an exponential growth in time for $F$ and
the other terms we want to control. So we prefer to argue in a finer way.

More precisely, for any $t\in[0,T]$ we have the bound
$$
\frac{C}{\rm Ro}\,\int^t_0\bigl(F[\rho](\tau)\bigr)^{1/2}\,d\tau\,\leq\,
\frac{C\,t}{\rm Ro}\,\left(\sup_{[0,t]}F[\rho]\right)^{1/2}\,\leq\,\frac{C\,t^2}{{\rm Ro}^2}\,+\,\frac{1}{2}\,\sup_{[0,t]}F[\rho]\,.
$$
and then, from \eqref{est:F}, we find
$$
F[\rho](t)\,+\,\frac{\nu}{{\rm We}}\int^t_0\!\!\int_\Omega\left|\nabla^2\rho\right|^2\,+\,
\frac{4\nu}{{\rm Fr}^2}\int^t_0\!\!\int_\Omega\bigl(P'(\rho)+P_c'(\rho)\bigr)\,\left|\nabla\sqrt{\rho}\right|^2\,\leq\,
C\,+\,\frac{C\,t^2}{{\rm Ro}^2}\,+\,\frac{1}{2}\,\sup_{[0,t]}F[\rho]\,.
$$
Taking first the $\sup_{[0,T]}$ of the member on the right, and then the $\sup_{[0,T]}$ of the member on the left, we deduce item $(i)$
of our statement.

On the other hand, if we bound the Coriolis term in \eqref{est:F} using the second type of estimates of Lemma \ref{l:rot},
for any $1<\g\leq2$ we easily find the former inequality in item $(ii)$.
%

In the special case $\g=2$, we can alternatively use the last bound of Lemma \ref{l:rot}: since
$P'(\rho)|\nabla\sqrt{\rho}|^2\,=\,|\nabla\rho|^2$, we can absorbe the last two terms of the right-hand side into the left-hand term.
Then, the inequality with $\wtilde{\Theta}$ immediately follows.
\end{proof}

\begin{rem} \label{r:anisotropic}
The previous proposition suggests to take $\g=2$ when considering the vanishing capillarity
regime. As a matter of fact, if ${\rm We}=\veps^{2(1-\alpha)}$, for some $0<\alpha\leq1$, uniform bounds in $\veps$
seem to be out of reach without resorting to $\wtilde{\Theta}$ rather than to $\Theta$.
\end{rem}

From Proposition \ref{p:F}, we deduce the next statement. Notice that, here below, the last assertion derives also from Lemma \ref{l:rho_BD}.
\begin{coroll} \label{c:F}
Let $(\rho,u)$ be a smooth solution to system \eqref{eq:NSK} in $\R_+\times\Omega$, with
initial datum $\bigl(\rho_0,u_0\bigr)$, and assume that $E[\rho_0,u_0]$ and $F[\rho_0]$ are finite.
Then one has the following bounds:
$$
\begin{cases}
\nabla\sqrt{\rho}\;\in\;L^\infty_{loc}\bigl(\R_+;L^2(\Omega)\bigr) \\[1ex]
\dfrac{\sqrt{\nu}}{\sqrt{\rm We}}\,\nabla^2\rho\;,\quad\dfrac{\sqrt{\nu}}{\rm Fr}\,\nabla\!\left(\rho^{\g/2}\right)\;,\quad
\dfrac{\sqrt{\nu}}{\rm Fr}\,\nabla\!\left(\dfrac{1}{\rho}\right)\qquad \in\;L^2_{loc}\bigl(\R_+;L^2(\Omega)\bigr)\,.
\end{cases}
$$
In particular, the quantity $\bigl(\rho-1\bigr)/\sqrt{\rm We}$ belongs to $L^2_{loc}\bigl(\R_+;L^\infty(\Omega)\bigr)$.
\end{coroll}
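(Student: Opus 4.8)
The plan is simply to collect the consequences of the BD entropy estimate already established, keeping in mind that in this section ${\rm Fr}$, ${\rm We}$ and ${\rm Ro}$ are fixed, so that all the powers of $\nu$, ${\rm Fr}$ and ${\rm We}$ that decorate the quantities in the statement are harmless multiplicative constants — they are displayed only to make the bounds transparent for the rescaled system of Section \ref{s:singular}, where the corollary will be applied with ${\rm Fr}={\rm Ro}=\veps$ and ${\rm We}=\veps^{2(1-\alpha)}$.

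First I would recall from \eqref{eq:def_F} that $F[\rho](t)=2\nu^2\int_\Omega|\nabla\sqrt{\rho}|^2\,dx$, so that the bound $\sup_{[0,T]}F[\rho]\leq C+C(T/{\rm Ro})^2$ of Proposition \ref{p:F}(i), valid for every fixed $T\in\R_+$, is exactly the assertion $\nabla\sqrt{\rho}\in L^\infty_{loc}(\R_+;L^2(\Omega))$. The second term on the left-hand side of the same inequality, namely $(\nu/{\rm We})\int_0^T\!\!\int_\Omega|\nabla^2\rho|^2$, is controlled by the identical right-hand side, which gives $(\sqrt{\nu}/\sqrt{\rm We})\,\nabla^2\rho\in L^2_{loc}(\R_+;L^2(\Omega))$. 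For the two pressure contributions I would use the rewriting \eqref{eq:P+P_c}: since $\Pi'=P'+P'_c$ with $P'>0$ and $P'_c>0$, the term $(\nu/{\rm Fr}^2)\int_0^T\!\!\int_\Omega\Pi'(\rho)|\nabla\sqrt{\rho}|^2$ — again bounded by Proposition \ref{p:F}(i) — dominates each of the nonnegative pieces $(C_\g\,\nu/{\rm Fr}^2)\int_0^T\!\!\int_\Omega|\nabla(\rho^{\g/2})|^2$ and $(C_{\g_c}\,\nu/{\rm Fr}^2)\int_0^T\!\!\int_\Omega|\nabla(\rho^{-\g_c/2})|^2$ separately; recalling that $\g_c=2$, the latter is a constant multiple of $\int_0^T\!\!\int_\Omega|\nabla(1/\rho)|^2$. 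This yields the membership of $(\sqrt{\nu}/{\rm Fr})\,\nabla(\rho^{\g/2})$ and $(\sqrt{\nu}/{\rm Fr})\,\nabla(1/\rho)$ in $L^2_{loc}(\R_+;L^2(\Omega))$.

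For the final assertion I would invoke the second estimate of Lemma \ref{l:rho_BD} with the choice $\delta=1/2$, hence $p=2$ and $q=1$: with these exponents all the powers of ${\rm We}$ and $\nu$ cancel in the last term, and one is left with a bound for $\|\rho-1\|^2_{L^2_T(L^\infty)}$ by a constant times $T$ plus $\|\nabla^2\rho\|^2_{L^2_T(L^2)}$, the latter already known to be finite from the previous step. Thus $\rho-1\in L^2_{loc}(\R_+;L^\infty(\Omega))$, and dividing by the fixed positive constant $\sqrt{\rm We}$ concludes. I do not expect any genuine obstacle in this corollary; the only point deserving a little care is the bookkeeping in Lemma \ref{l:rho_BD}, i.e. selecting the admissible triple $(\delta,p,q)$ for which the $L^2_tL^\infty$ norm of $\rho-1$ is controlled purely by quantities already shown to be locally finite.
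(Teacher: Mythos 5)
Your proof is correct and follows exactly the route the paper has in mind: the paper simply states that the corollary "follows from Proposition~\ref{p:F}" with the last assertion "also from Lemma~\ref{l:rho_BD}", and your reconstruction fills in the details. The bookkeeping in the first three items is straightforward, and your choice $(\delta,p,q)=(1/2,2,1)$ in Lemma~\ref{l:rho_BD} is the cleanest admissible triple (it sits at the allowed endpoint $\delta=1/2$, $p=4/(1+2\delta)=2$) and does make the powers of ${\rm We}$ and $\nu$ cancel exactly as you claim, reducing the last term to $\|\nabla^2\rho\|^2_{L^2_t(L^2)}$, which is finite by the second item. Your closing remark — that the $\sqrt{\rm We}$-normalization is cosmetic at this stage since the parameters are fixed — is also the correct reading; the genuinely uniform version used later (Proposition~\ref{p:sing_b-BD}) requires the separate argument via Lemma~\ref{l:density}, as the paper notes.
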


\subsection{Additional bounds} \label{ss:unif-b}

In the present paragraph we show further properties, which can be deduced from the previous controls
of Corollary \ref{c:E} and Corollary \ref{c:F}.

In the sequel, we will still keep track of the dependence of the various quantities on the Froude, Rossby and Weber numbers.
On the contrary, we will drop out the dependence on $\nu$, since for us it will be always a fixed positive parameter.

\medbreak
Hence, let us fix a $T\in\R_+$. First of all, from Corollaries \ref{c:E} and \ref{c:F} above, we get
$$
\frac{1}{\rm Fr}\left(\frac{1}{\rho}\,-\,1\right)\;\in\;L^\infty_T\bigl(L^2(\Omega)\bigr)\,\cap\,L^2_T\bigl(H^1(\Omega)\bigr)\,.
$$
Then, since $\left|1/\sqrt{\rho}\,-\,1\right|\leq\left|1/\rho\,-\,1\right|$, by Sobolev embeddings we also infer
$$
\frac{1}{\rm Fr}\left(\frac{1}{\sqrt{\rho}}\,-\,1\right)\;\in\;L^\infty_T\bigl(L^2(\Omega)\bigr)\,\cap\,L^2_T\bigl(L^6(\Omega)\bigr)\,.
$$

Therefore, thanks to energy estimates of Proposition \ref{p:E}, we deduce first that
\begin{equation} \label{unif-bound:u}
u\,=\,\sqrt{\rho}\,u\,+\,\left(\frac{1}{\sqrt{\rho}}\,-\,1\right)\,\sqrt{\rho}\,u\quad\in\,L^\infty_T\bigl(L^2\bigr)
+L^2_T\bigl(L^{3/2}\bigr)\,\hookrightarrow\,L^2_T\bigl(L^{3/2}_{loc}\bigr)\,,
\end{equation}
and hence, by an analogous decomposition, also that
\begin{equation} \label{unif-bound:Du}
Du\,=\,\sqrt{\rho}\,Du\,+\,\left(\frac{1}{\sqrt{\rho}}\,-\,1\right)\,\sqrt{\rho}\,Du\quad\in\,\Bigl(L^2_T\bigl(L^2\bigr)
+L^1_T\bigl(L^{3/2}\bigr)\Bigr)\,\cap\,\Bigl(L^2_T\bigl(L^2+L^1\bigr)\Bigr)\,.
\end{equation}
Notice that, in particular, $Du$ belongs to $L^1_T\bigl(L^{3/2}_{loc}\bigr)$; therefore, Sobolev embeddings
implies the additional property $u\,\in\,L^1_T\bigl(L^3_{loc}\bigr)$.

We turn now our attention to the quantity $\rho\,u$. Exploiting the same decomposition as above, together with the
$L^2_T\bigl(L^\infty\bigr)$ control on $\rho-1$ provided by the BD entropy conservation (recall Corollary \ref{c:F}), we have
\begin{equation} \label{unif-bound:rho-u}
\rho\,u\,=\,\sqrt{\rho}\,u\,+\,\left(\sqrt{\rho}\,-\,1\right)\,\sqrt{\rho}\,u\quad\in\,L^\infty_T\bigl(L^2+L^{3/2}\bigr)\,\cap\,
\Bigl(L^\infty_T\bigl(L^2\bigr)+L^2_T\bigl(L^2\bigr)\Bigr)\,.
\end{equation}
In particular, this implies that $\rho\,u$ belongs also to $L^\infty_T\bigl(L^{3/2}_{loc}\bigr)\,\cap\,L^2_T\bigl(L^2\bigr)$.
On the other hand, for its gradient we have
$$
D(\rho\,u)\,=\,\rho\,Du\,+\,u\,D\rho\,=\,\sqrt{\rho}\,Du\,+\,\left(\sqrt{\rho}-1\right)\,\sqrt{\rho}\,Du\,+\,\sqrt{\rho}\,u\,D\sqrt{\rho}\,.
$$
The first two terms are in $L^2_T\bigl(L^2+L^{3/2}\bigr)$, while the last one is in $L^\infty_T\bigl(L^1\bigr)$. Then we find
$D(\rho\,u)\,\in\,\Bigl(L^2_T\bigl(L^2+L^{3/2}\bigr)+L^\infty_T\bigl(L^1\bigr)\Bigr)\,\hra\,L^2_T\bigl(L^1_{loc}\bigr)$.

Finally, let us consider the quantity $\rho^{3/2}\,u$: arguing exactly as in \eqref{unif-bound:rho-u}, we get
\begin{equation} \label{unif-bound:rho^32-u}
\rho^{3/2}\,u\,=\,\sqrt{\rho}\,u\,+\,\left(\rho\,-\,1\right)\,\sqrt{\rho}\,u\quad\in\,L^\infty_T\bigl(L^2+L^{3/2}\bigr)\,\cap\,
\Bigl(L^\infty_T\bigl(L^2\bigr)+L^2_T\bigl(L^2\bigr)\Bigr)\,.
\end{equation}
Furthermore, we notice that
\begin{eqnarray}
D\bigl(\rho^{3/2}\,u\bigr) & = & \rho\,\sqrt{\rho}\,Du\,+\,\frac{3}{2}\,\sqrt{\rho}\,u\,D\rho \label{unif-bound:D(rho-u)} \\
& = & \sqrt{\rho}\,Du\,+\,\left(\rho\,-\,1\right)\sqrt{\rho}\,Du\,+\,
\frac{3}{2}\,\sqrt{\rho}\,u\,D\rho\quad\in\,L^2_T\bigl(L^2(\Omega)+L^{3/2}(\Omega)\bigr)\,. \nonumber
\end{eqnarray}
Indeed, the first term in the right-hand side of the last relation clearly belongs to $L^2_T\bigl(L^2\bigr)$; moreover, by Corollaries
\ref{c:E} and \ref{c:F} and Sobolev embeddings, the second and the third terms are in $L^2_T\bigl(L^{3/2}\bigr)$.
Therefore, by use of Proposition \ref{p:emb_hom-besov}, we deduce that
$\rho^{3/2}\,u$ belongs also to $L^2_T\bigl(L^3(\Omega)\bigr)$.

\section{The singular perturbation problem} \label{s:singular}

In the present section we prove Theorem \ref{t:sing_var}.
We first establish uniform bounds for the family $\bigl(\rho_\veps,u_\veps\bigr)_\veps$
of weak solutions we are considering. Then, we study the singular pertubation operator, showing constraints on the limit-points
of this family. Finally, we pass to the limit in the weak formulation of the equations, proving the convergence to
equation \eqref{eq:lim_var}.

\subsection{Uniform estimates} \label{ss:uniform}

By use of the analysis of Section \ref{s:bounds} (keep in mind also what said in Remark \ref{r:uniform}), we establish here uniform bounds for the family of weak solutions
$\bigl(\rho_\veps,u_\veps\bigr)_\veps$. Recall that we have fixed $\g_c=2$, $\alpha=0$ and $1<\g\leq2$.

First of all, we consider the energies $E_\veps[\rho,u]$ and $F_\veps[\rho]$, defined respectively by \eqref{en_est:E} and
\eqref{en_est:F}. We remark that, under our hypotheses on the initial data, we deduce the existence of a ``universal constant'' $C_0>0$
such that
$$
E_\veps[\rho_\veps,u_\veps](0)\,+\,F_\veps[\rho_\veps](0)\,\leq\,C_0\,.
$$

Then, by use of Corollary \ref{c:E} we immediately infer the following properties.
\begin{prop} \label{p:sing_b-E}
Let $\bigl(\rho_\veps,u_\veps\bigr)_\veps$ be the family of weak solutions to system \eqref{eq:NSK-sing}
considered in Theorem \ref{t:sing_var}.
Then it satisfies the following bounds, uniformly in $\veps$:
$$
\sqrt{\rho_\veps}\,u_\veps\;\in\;L^\infty\bigl(\R_+;L^2(\Omega)\bigr)\qquad\mbox{ and }\qquad
\sqrt{\rho_\veps}\,Du_\veps\;\in\;L^2\bigl(\R_+;L^2(\Omega)\bigr)
$$
for the velocity fields, and for the densities
\begin{eqnarray*}
& & \hspace{-2cm}
\dfrac{1}{\veps}\left(\rho_\veps\,-\,1\right)\;\in\;L^\infty\bigl(\R_+;L^\g(\Omega)\bigr)\;,\qquad
\dfrac{1}{\veps}\left(\dfrac{1}{\rho_\veps}\,-\,1\right)\;\in\;L^\infty\bigl(\R_+;L^2(\Omega)\bigr)\;, \\[1ex]
& & \dfrac{1}{\veps}\,\nabla\rho_\veps\;\in\;L^\infty\bigl(\R_+;L^2(\Omega)\bigr)\,.
\end{eqnarray*}
\end{prop}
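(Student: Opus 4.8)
The plan is to read off every stated bound from the classical energy estimate of Corollary~\ref{c:E}, specialised to the singular scaling ${\rm Fr}={\rm Ro}=\veps$ and ${\rm We}=\veps^{2(1-\alpha)}=\veps^{2}$ (recall that $\alpha=0$ and $\g_c=2$ are fixed throughout this section). The only preliminary needed is the uniform control of the initial data: under hypotheses (i)--(iii) on $(\rho_{0,\veps},u_{0,\veps})_\veps$, with this scaling each summand of $E_\veps[\rho_{0,\veps},u_{0,\veps}]$ is comparable to a square of $r_{0,\veps}$, $a_{0,\veps}$, $\nabla r_{0,\veps}$ or $u_{0,\veps}$ (using $\Pi'(1)=1$ to control the internal energies near $\rho=1$), while $F_\veps[\rho_{0,\veps}]=2\nu^{2}\|\nabla\sqrt{\rho_{0,\veps}}\|_{L^{2}}^{2}$ is likewise controlled by $\|\nabla r_{0,\veps}\|_{L^{2}}$; hence $E_\veps[\rho_{0,\veps},u_{0,\veps}]+F_\veps[\rho_{0,\veps}]\le C_{0}$ uniformly in $\veps$ — which is exactly the bound already recorded above — and in particular $E_\veps[\rho_{0,\veps},u_{0,\veps}]<+\infty$, so Corollary~\ref{c:E} applies.

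Next I would note that, although Corollary~\ref{c:E} is formulated for smooth solutions, the weak solutions under consideration are, by construction, limits of smooth approximate solutions satisfying the energy balance of Proposition~\ref{p:E} (see Remark~\ref{r:uniform}), and they are moreover required to satisfy the energy inequality \eqref{en_est:E}; therefore the conclusions of Corollary~\ref{c:E} are inherited by $(\rho_\veps,u_\veps)$. Translating those conclusions through ${\rm Fr}={\rm We}^{1/2}=\veps$ yields at once: $\sqrt{\rho_\veps}\,u_\veps$ and $\veps^{-1}\nabla\rho_\veps$ bounded in $L^{\infty}(\R_+;L^{2}(\Omega))$, $\sqrt{\rho_\veps}\,Du_\veps$ bounded in $L^{2}(\R_+;L^{2}(\Omega))$, $\veps^{-1}(\rho_\veps-1)$ bounded in $L^{\infty}(\R_+;L^{\g}(\Omega))$, and — using $\g_c=2$ together with the nonnegativity of the auxiliary quantity $H_c$ from the proof of Corollary~\ref{c:E} — $\veps^{-1}(\rho_\veps^{-1}-1)$ bounded in $L^{\infty}(\R_+;L^{2}(\Omega))$. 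These are precisely the asserted bounds.

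I do not expect any genuine obstacle here: the proposition is a direct corollary of the scaled classical energy estimate, and the single point deserving a sentence is that the a priori bounds of Section~\ref{s:bounds}, derived for smooth solutions, are indeed valid for the weak solutions one works with, which is guaranteed by Remark~\ref{r:uniform} and by having built \eqref{en_est:E} into the notion of solution used in Theorem~\ref{t:sing_var}.
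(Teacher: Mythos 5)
Your proof is correct and follows essentially the same route as the paper: observe that the hypotheses (i)--(iii) on the data give a uniform bound $E_\veps[\rho_{0,\veps},u_{0,\veps}]+F_\veps[\rho_{0,\veps}]\le C_0$, then invoke Corollary~\ref{c:E} under the scaling ${\rm Fr}={\rm Ro}=\veps$, ${\rm We}=\veps^2$, using Remark~\ref{r:uniform} together with \eqref{en_est:E} to transfer the smooth-solution estimates to the weak solutions. The paper states this more tersely (a one-line appeal to Corollary~\ref{c:E} after recording the initial-energy bound), but the content is identical.
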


\begin{rem} \label{r:rho_L^2}
In particular, under our assumptions we always have 
$$ 
\left\|\rho_\veps\,-\,1\right\|_{L^\infty(\R_+;L^2(\Omega))}\,\leq\,C\,\veps\,.
$$ 
\end{rem}

As for the BD entropy structure, Corollary \ref{c:F} implies the following estimates; as for the last sentence, one has to use Lemma
\ref{l:density} in the appendix.
\begin{prop} \label{p:sing_b-BD}
Let $\bigl(\rho_\veps,u_\veps\bigr)_\veps$ be the family of weak solutions to system \eqref{eq:NSK-sing}
considered in Theorem \ref{t:sing_var}.
Then one has the following bounds, uniformly for $\veps>0$:
$$
\begin{cases}
\nabla\sqrt{\rho_\veps}\;\in\;L^\infty_{loc}\bigl(\R_+;L^2(\Omega)\bigr) \\[1ex]
\dfrac{1}{\veps}\,\nabla^2\rho_\veps\;,\quad\dfrac{1}{\veps}\,\nabla\!\left(\rho_\veps^{\g/2}\right)\;,\quad
\dfrac{1}{\veps}\,\nabla\!\left(\dfrac{1}{\rho_\veps}\right)\qquad \in\;L^2_{loc}\bigl(\R_+;L^2(\Omega)\bigr)\,.
\end{cases}
$$
In particular, the family $\bigl(\veps^{-1}\,(\rho_\veps-1)\bigr)_\veps$ is bounded in $L^p_{loc}\bigl(\R_+;L^\infty(\Omega)\bigr)$
for any $2\leq p<4$.
\end{prop}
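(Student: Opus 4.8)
The bounds listed here are exactly those of Corollary \ref{c:F}, so the plan is simply to re-read that corollary for the family $(\rho_\veps,u_\veps)_\veps$ and to check that the constants involved can be chosen independent of $\veps$; by Remark \ref{r:uniform} it is enough to run the argument on the smooth approximate solutions constructed in Section \ref{s:bounds}. First I would start from the quantitative BD estimate of Proposition \ref{p:F}(ii), written with the scaling ${\rm Fr}={\rm Ro}=\veps$, ${\rm We}=\veps^{2(1-\alpha)}=\veps^2$ (recall $\alpha=0$). The key observation is that the parameter appearing there is $\Theta=\Theta(\veps,\veps,\veps^2)=(\veps+\veps)/\veps=2$, hence uniformly bounded for $\veps\in\,]0,1]$, while $({\rm Fr},{\rm We})$ ranges in the compact set $[0,1]\times[0,1]$, so that $C_1,C_2$ stay bounded; together with the uniform bound $E_\veps[\rho_{0,\veps},u_{0,\veps}]+F_\veps[\rho_{0,\veps}]\leq C_0$ on the data, this produces, for every fixed $T>0$, a constant $C(T)$ independent of $\veps$ with
$$\sup_{[0,T]}F_\veps[\rho_\veps]\,+\,\frac{\nu}{\veps^{2}}\int_0^T\!\!\int_\Omega\bigl|\nabla^2\rho_\veps\bigr|^2\,dx\,dt\,+\,\frac{\nu}{\veps^{2}}\int_0^T\!\!\int_\Omega\Pi'(\rho_\veps)\,\bigl|\nabla\sqrt{\rho_\veps}\bigr|^2\,dx\,dt\,\leq\,C(T)\,.$$

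From this single inequality all the listed memberships follow. By \eqref{eq:def_F} the first term equals $2\nu^2\|\nabla\sqrt{\rho_\veps}\|_{L^2}^2$, which gives $\nabla\sqrt{\rho_\veps}\in L^\infty_{loc}(\R_+;L^2)$ uniformly; the second term gives at once $\veps^{-1}\nabla^2\rho_\veps\in L^2_{loc}(\R_+;L^2)$. For the last two quantities I would split $\Pi'=P'+P_c'$ and use identity \eqref{eq:P+P_c}: since $\g_c=2$, the third term in the display is, up to a fixed multiplicative constant, the sum of the two \emph{nonnegative} pieces $C_\g\,\nu\veps^{-2}\|\nabla(\rho_\veps^{\g/2})\|_{L^2}^2$ and $\nu\veps^{-2}\|\nabla(1/\rho_\veps)\|_{L^2}^2$, so each is separately controlled by $C(T)$, whence $\veps^{-1}\nabla(\rho_\veps^{\g/2})$ and $\veps^{-1}\nabla(1/\rho_\veps)$ lie in $L^2_{loc}(\R_+;L^2)$ uniformly in $\veps$.

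The one point deserving care is the final assertion. Setting $r_\veps:=\veps^{-1}(\rho_\veps-1)$, Remark \ref{r:rho_L^2} (equivalently Proposition \ref{p:sing_b-E}) gives $(r_\veps)_\veps$ bounded in $L^\infty_T(L^2)$, the bound on $\veps^{-1}\nabla\rho_\veps$ gives it bounded in $L^\infty_T(\dot H^1)$, and the bound on $\veps^{-1}\nabla^2\rho_\veps$ just obtained gives $(r_\veps)_\veps$ bounded in $L^2_T(\dot H^2)$; altogether $(r_\veps)_\veps$ is bounded in $L^\infty_T(H^1)\cap L^2_T(H^2)$ uniformly in $\veps$. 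I would then conclude with the interpolation Lemma \ref{l:density} of the Appendix: in dimension $3$ one has $\|f\|_{L^\infty}\lesssim\|f\|_{H^1}^{1-\theta}\|f\|_{H^2}^{\theta}$ for any $\theta\in\,]1/2,1]$ (recall $H^{1+\theta}(\R^3)\hookrightarrow L^\infty$ precisely when $1+\theta>3/2$), and integrating in time with the constraint $\theta\,p\leq2$ then yields a uniform bound for $(r_\veps)_\veps$ in $L^p_T(L^\infty)$ as soon as some $\theta$ with $1/2<\theta\leq2/p$ exists, i.e. for every $2\leq p<4$. I expect this step to be the (mild) obstacle: a direct substitution of the scaling into Lemma \ref{l:rho_BD} is \emph{not} sufficient, since its leading contribution scales like $\veps\,t$ and becomes unbounded once multiplied by $\veps^{-p}$, so one genuinely has to apply the interpolation to $r_\veps$ itself; and the endpoint $p=4$ must be excluded because it forces the borderline exponent $\theta=1/2$, for which $H^{3/2}(\R^3)\not\hookrightarrow L^\infty$.
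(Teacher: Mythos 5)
Your proof is correct and follows essentially the same route the paper intends: the paper simply invokes Corollary~\ref{c:F} (whose proof specializes Proposition~\ref{p:F}(ii) exactly as you do, noting $\Theta=2$ for ${\rm Fr}={\rm Ro}=\veps$, ${\rm We}=\veps^2$) plus Lemma~\ref{l:density} for the $L^p_{loc}(L^\infty)$ assertion. Your Gagliardo--Nirenberg--Sobolev interpolation $\|f\|_{L^\infty}\lesssim\|f\|_{H^1}^{1-\theta}\|f\|_{H^2}^{\theta}$ with $\theta>1/2$ is just a repackaging of Lemma~\ref{l:density}(ii) (write $\theta=1/2+\delta$), applied to $r_\veps$ rather than to $\rho_\veps-1$, and the bookkeeping $\theta\,p\le2$, $\theta>1/2$ reproducing the range $2\le p<4$ is the same constraint $q=4/((1+2\delta)p)\ge1$ appearing there.

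One small caveat on your side remark: you argue that plugging the scaling into Lemma~\ref{l:rho_BD} cannot work because the leading term ``scales like $\veps\,t$.'' That reading is a consequence of a typo in the paper: the first term in Lemma~\ref{l:rho_BD} (both in the statement and in formula~\eqref{est:rho_BD}) should carry the exponent $p$, i.e.\ it should read $\|\rho-1\|_{L^\infty_t(L^2)}^{\,p}\,t$, as one sees from raising Lemma~\ref{l:density}(ii) to the $p$-th power before integrating in time. With the exponent restored, the first contribution is $O(\veps^p\,t)$ and after dividing by $\veps^p$ it is bounded, and the second contribution is $O\bigl(\veps^{p(3-2\delta)/2}\bigr)$ which, divided by $\veps^p$, tends to $0$; so Lemma~\ref{l:rho_BD} \emph{does} deliver the result directly once corrected. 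Your choice to apply the interpolation to $r_\veps$ itself is cleaner and sidesteps the typo, so this is not a flaw in your argument, only a mischaracterization of the obstacle.
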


Moreover, arguing exactly as in Subsection \ref{ss:unif-b}, we can establish also the following bounds, uniformly in $\veps$.
First of all, by the decompositions \eqref{unif-bound:u} and \eqref{unif-bound:Du} we immediately have
\begin{eqnarray*}
\bigl(u_\veps\bigr)_\veps & \subset & L^\infty_T\bigl(L^2\bigr)+L^2_T\bigl(L^{3/2}\bigr)\,\hookrightarrow\,
L^2_T\bigl(L^{3/2}_{loc}\bigr) \\ 
\bigl(Du_\veps\bigr)_\veps & \subset & \Bigl(L^2_T\bigl(L^2\bigr)
+L^1_T\bigl(L^{3/2}\bigr)\Bigr)\,\cap\,\Bigl(L^2_T\bigl(L^2+L^1\bigr)\Bigr)\,. 
\end{eqnarray*}
In particular, $\bigl(Du_\veps\bigr)_\veps$ is uniformly bounded in $L^1_T\bigl(L^{3/2}_{loc}\bigr)$; therefore,
by Sobolev embeddings we gather also the additional continuous inclusion $\bigl(u_\veps\bigr)_\veps\,\subset\,L^1_T\bigl(L^3_{loc}\bigr)$.

Furthermore, we also infer the uniform bounds
\begin{eqnarray}
\bigl(\rho_\veps\,u_\veps\bigr)_\veps & \subset & L^\infty_T\bigl(L^2+L^{3/2}\bigr)\,\cap\,
\Bigl(L^\infty_T\bigl(L^2\bigr)+L^2_T\bigl(L^2\bigr)\Bigr) \nonumber \\
\bigl(D(\rho_\veps\,u_\veps)\bigr)_\veps & \subset & L^2_T\bigl(L^2+L^{3/2}\bigr)\,+\,L^\infty_T\bigl(L^1\bigr)\;\hra\;
L^2_T\bigl(L^1_{loc}\bigr)\,. \label{sing-b:D_rho-u}
\end{eqnarray}
In particular, we deduce that $\bigl(\rho_\veps\,u_\veps\bigr)_\veps$ is a bounded family in 
$L^\infty_T\bigl(L^{3/2}_{loc}\bigr)\,\cap\,L^2_T\bigl(L^2\bigr)$.

For the sake of completeness let us also establish uniform bounds on quantities related to $\rho^{3/2}_\veps\,u_\veps$.
First of all, arguing exactly as in \eqref{unif-bound:rho^32-u}, we get
$$
\bigl(\rho^{3/2}_\veps\,u_\veps\bigr)_\veps\;\subset\;L^\infty_T\bigl(L^2+L^{3/2}\bigr)\,\cap\,
\Bigl(L^\infty_T\bigl(L^2\bigr)+L^2_T\bigl(L^2\bigr)\Bigr)\,;
$$
on the other hand, analogously to \eqref{unif-bound:D(rho-u)}, we have also the uniform embedding
\begin{equation} \label{sing-b:D(rho-u)}
\Bigl(D\bigl(\rho^{3/2}_\veps\,u_\veps\bigr)\Bigr)_\veps\;\subset\;L^2_T\bigl(L^2(\Omega)+L^{3/2}(\Omega)\bigr)\,.
\end{equation}
Therefore, by Proposition \ref{p:emb_hom-besov} we infer that
$\bigl(\rho^{3/2}_\veps\,u_\veps\bigr)_\veps$ is uniformly bounded in $L^2_T\bigl(L^3(\Omega)\bigr)$.

Finally, from this fact combined with the usual decomposition $\sqrt{\rho_\veps}\,=\,1+(\sqrt{\rho_\veps}-1)$
and Sobolev embeddings, it follows also that
$$ 
\bigl(\rho_\veps^2\,u_\veps\bigr)_\veps\;\subset\;L^2_T\bigl(L^2(\Omega)\bigr)\,.
$$ 

\subsection{Study of the singular perturbation operator}

In the present subsection we identify some properties and constraints on the limit points of the family of weak solutions
$\bigl(\rho_\veps,u_\veps\bigr)_\veps$.

\medbreak
First of all, by uniform bounds we immediately deduce that $\rho_\veps\,\ra\,1$ (strong convergence)
in $L^\infty\bigl(\R_+;H^1(\Omega)\bigr)\,\cap\,L^2_{loc}\bigl(\R_+;H^2(\Omega)\bigr)$, with convergence rate of order $\veps$.
So, we can write $\rho_\veps\,=\,1\,+\,\veps\,r_\veps$, with the family $\bigl(r_\veps\bigr)_\veps$ bounded in
the previous spaces. Then we infer that
\begin{equation} \label{eq:conv-r}
\qquad\qquad\qquad
r_\veps\,\rightharpoonup\,r\qquad\qquad\qquad\mbox{ in }\quad
L^\infty\bigl(\R_+;H^1(\Omega)\bigr)\,\cap\,L^2_{loc}\bigl(\R_+;H^2(\Omega)\bigr)\,.
\end{equation}

In the same way, if we define $a_\veps\,:=\,\bigl(1/\rho_\veps-1\bigr)/\veps$, we gather that $\bigl(a_\veps\bigr)_\veps$ is uniformly
bounded in $L^\infty\bigl(\R_+;L^2(\Omega)\bigr)\,\cap\,L^2_{loc}\bigl(\R_+;H^1(\Omega)\bigr)$. So it weakly converges
to some $a$ in this space: arguing as done in Remark \ref{r:cold-density}, it is easy to check that
\begin{equation} \label{eq:conv-a}
\qquad\qquad\qquad
a_\veps\,\rightharpoonup\,-\,r\qquad\qquad\qquad\mbox{ in }\quad
L^\infty\bigl(\R_+;L^2(\Omega)\bigr)\,\cap\,L^2_{loc}\bigl(\R_+;H^1(\Omega)\bigr)\,.
\end{equation}

Again by uniform bounds, we also deduce
\begin{equation}
\qquad\qquad\qquad
u_\veps\,\rightharpoonup\,u\,\qquad\qquad\qquad\mbox{ in }\quad L^2_{loc}\bigl(\R_+;L^{3/2}_{loc}(\Omega)\bigr) \label{eq:conv-u}
\end{equation}
and $Du_\veps\,\rightharpoonup\,Du$ in $L^2_{loc}\bigl(\R_+;L^1_{loc}(\Omega)\bigr)$, where we have identified $(L^1)^*$ with
$L^\infty$.

Notice also that, by uniqueness of the limit, we have the additional properties
\begin{eqnarray*}
\sqrt{\rho_\veps}\,u_\veps\,\stackrel{*}{\rightharpoonup}\,u & \qquad\qquad\mbox{ in }\quad &  L^\infty\bigl(\R_+;L^2(\Omega)\bigr) \\[1ex]
\rho_\veps\,u_\veps\,\rightharpoonup\,u & \qquad\qquad\mbox{ in }\quad &  L^2_{loc}\bigl(\R_+;L^2(\Omega)\bigr) \\[1ex]
\sqrt{\rho_\veps}\,Du_\veps\,\rightharpoonup\,Du & \qquad\qquad\mbox{ in }\quad &  L^2\bigl(\R_+;L^2(\Omega)\bigr)\,,
\end{eqnarray*}
where $\stackrel{*}{\rightharpoonup}$ denotes the weak-$*$ convergence in $L^\infty\bigl(\R_+;L^2(\Omega)\bigr)$.

\medbreak
Let us find now some constraints the limit points $(r,u)$ have to satisfy: the following result can be seen as
the analogue of the Taylor-Proudman theorem in our setting.

\begin{prop} \label{p:TP}
Let $\bigl(\rho_\veps,u_\veps\bigr)_\veps$ be a family of weak solutions to system \eqref{eq:NSK-sing}-\eqref{eq:bc}, each one
related to the initial datum $\bigl(\rho_{0,\veps},u_{0,\veps}\bigr)$ and fulfilling the hypotheses fixed in Section \ref{s:results}. \\
Let us define $r_\veps:=\veps^{-1}\left(\rho_\veps-1\right)$, and let $(r,u)$ be a limit point of the sequence
$\bigl(r_\veps,u_\veps\bigr)_\veps$.

Then $r=r(x^h)$ and $u=\bigl(u^h(x^h),0\bigr)$, with $\div_{\!h}u^h=0$. Moreover, we infer the properties
$$
\mf{c}\,u^h\;=\,\nabla_h^\perp\bigl(\Id\,-\,\Delta_h\bigr)r\qquad\qquad\mbox{ and }\qquad\qquad
u^h\,\cdot\,\nabla_h\mf{c}\,\equiv\,0\,.
$$
\end{prop}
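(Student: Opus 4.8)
The plan is to pass to the limit $\veps \to 0$ in the weak formulation of the two equations of \eqref{eq:NSK-sing}, after multiplying by appropriate powers of $\veps$ to isolate the singular terms, and then read off the constraints on $(r,u)$ from the resulting relations. First I would treat the mass equation: multiplying by $\veps$ and using $\rho_\veps = 1 + \veps r_\veps$, the equation $\d_t\rho_\veps + \div(\rho_\veps u_\veps) = 0$ gives, at leading order, $\div(\rho_\veps u_\veps) = -\veps\,\d_t r_\veps \to 0$ in the sense of distributions. Since $\rho_\veps u_\veps \rightharpoonup u$ in $L^2_{loc}$, this yields $\div u = 0$; combined with the boundary condition $u^3_{|\d\Omega}=0$ and the vertical-average decomposition, together with the vertical-derivative structure that will come out of the momentum equation, this will force $u^3 \equiv 0$ and $\div_h u^h = 0$.

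Next I would turn to the momentum equation \eqref{eq:weak-momentum}, multiply by $\veps$, and observe that only the pressure/capillarity gradient (of order $1/\veps$) and the Coriolis term (of order $1/\veps$) survive the limit, all other terms being uniformly bounded in a suitable space and hence $O(\veps)$ after multiplication. Expanding $\Pi(\rho_\veps)$ around $\rho = 1$ using $\Pi'(1) = 1$ and the bounds of Proposition \ref{p:sing_b-E} and \ref{p:sing_b-BD}, one gets $\frac{1}{\veps^2}\nabla\Pi(\rho_\veps) = \frac{1}{\veps}\nabla r_\veps + \text{l.o.t.}$; similarly $\frac{1}{\veps}\,\rho_\veps\nabla\Delta\rho_\veps = \nabla\Delta r_\veps + \text{l.o.t.}$ Wait — the capillarity term has coefficient $1/\veps^{2(1-\alpha)} = 1/\veps^{0} = 1$ since $\alpha = 0$... no: $\rho_\veps \nabla\Delta\rho_\veps = (1+\veps r_\veps)\nabla\Delta(\veps r_\veps) = \veps\,\nabla\Delta r_\veps + O(\veps^2)$, so after multiplying the equation by $\veps$ this term is $O(\veps^2)$ and drops. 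Hmm, but then where does $\Delta_h r$ in $u^h = \nabla_h^\perp(\Id - \Delta_h)r$ come from? It must come from keeping the capillarity term at order $1/\veps$ \emph{before} multiplying by $\veps$: actually the capillarity contributes $\frac{1}{\veps^{2(1-\alpha)}}\rho_\veps\nabla\Delta\rho_\veps$ with $\alpha=0$, i.e. coefficient $\veps^{-2}$, so $\veps^{-2}\rho_\veps\nabla\Delta\rho_\veps = \veps^{-1}\nabla\Delta r_\veps + O(1)$. Good — so after multiplying by $\veps$ and letting $\veps\to 0$, the surviving balance is $\nabla\Pi$'s leading part minus the capillarity's leading part plus Coriolis, i.e.
\[
\nabla r \;-\; \nabla\Delta r \;+\; \mf{c}\,e^3 \times u \;=\; 0
\]
in the sense of distributions (the Coriolis term: $\veps^{-1}\mf{c}\,e^3\times\rho_\veps u_\veps$, multiplied by $\veps$, converges to $\mf{c}\,e^3\times u$). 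From this identity I would extract the conclusions: taking the third component gives $0=0$ (consistent with $u^3$ absent); taking horizontal components, $e^3\times u$ has horizontal part $(u^h)^\perp$, so $\nabla_h(\Id-\Delta_h)r = \mf{c}\,(u^h)^\perp$ — equivalently $\mf{c}\,u^h = \nabla_h^\perp(\Id-\Delta_h)r$, and since the right-hand side and $r$ depend only on $x^h$, so does $u^h$ (away from the zero set of $\mf{c}$, extended by the a.e.-nonvanishing hypothesis); the $x^3$-independence of $r$ comes from the vertical component of the gradient relation, $\d_3 r - \d_3\Delta r = 0$ together with the energy bound forcing $r$ to depend on $x^h$ only. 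Finally, $u^h\cdot\nabla_h\mf{c} \equiv 0$ follows by taking $\div_h$ of $\mf{c}\,u^h = \nabla_h^\perp(\Id-\Delta_h)r$: the right-hand side is a curl, hence divergence-free, so $\div_h(\mf{c}\,u^h) = \mf{c}\,\div_h u^h + u^h\cdot\nabla_h\mf{c} = 0$, and since $\div_h u^h = 0$ we conclude $u^h\cdot\nabla_h\mf{c} = 0$.

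The main obstacle is the rigorous justification that all the "lower order terms" genuinely vanish in the limit in the right topology — in particular that $\Pi(\rho_\veps) - 1 - \veps r_\veps$ is $o(\veps)$ in $L^1_{loc}$ (which needs the $L^p_{loc}(L^\infty)$ bound on $r_\veps$ from Proposition \ref{p:sing_b-BD} to control the quadratic remainder) and that the convective and viscous terms, which are only bounded in rather weak spaces like $L^2_T(L^1_{loc})$, vanish after multiplication by $\veps$; one must also be careful that $\mf{c}\in W^{1,\infty}$ is enough regularity to make sense of the products $\mf{c}\,u^h$ and $u^h\cdot\nabla_h\mf{c}$ as distributions and to perform the $\div_h$ manipulation — here the $W^{1,\infty}$ hypothesis on $\mf{c}$ is exactly what is needed. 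A secondary technical point is establishing that $r$ and $u^h$ have no $x^3$-dependence: this requires combining the distributional identity with the vertical-average decomposition \eqref{dec:vert-av} and testing against functions of the form $\d_3\psi$, the same device used to get $u^3\equiv0$.
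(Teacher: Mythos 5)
Your proposal follows essentially the same route as the paper: pass to the limit in the mass equation to get $\div u = 0$, then in the momentum equation tested against $\veps\psi$ to obtain the balance $\mf{c}\,e^3\times u + \nabla(\Id-\Delta)r = 0$, and read off the constraints from its vertical and horizontal components together with a $\div_{\!h}$ of the resulting relation $\mf{c}\,u^h = \nabla_h^\perp(\Id-\Delta_h)r$. The one point worth flagging is that the capillarity expansion must remain in the weak form (as $\rho\Delta\rho\,\div\psi + \Delta\rho\,\nabla\rho\cdot\psi$ after one integration by parts), since the BD entropy controls $\veps^{-1}\nabla^2\rho_\veps$ but not $\veps^{-1}\nabla^3\rho_\veps$, so the quantity $\nabla\Delta r_\veps$ appearing in your formal computation is not itself bounded; likewise the $\d_3 r\equiv 0$ step should be made precise via the elliptic equation $(\Id-\Delta)\d_3 r=0$ on $\R^2\times\mathbb{T}^1$, which is what the paper does.
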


\begin{proof}
First of all, we test the mass equation on any $\phi\in\mc{D}\bigl([0,T[\,\times\Omega\bigr)$: using
the decomposition $\rho_\veps\,=\,1\,+\,\veps\,r_\veps$, we get
$$
-\,\veps\int^T_0\int_\Omega r_\veps\,\d_t\phi\,-\,\int^T_0\int_\Omega\rho_\veps\,u_\veps\cdot\nabla\phi\,=\,
\veps\int_\Omega r_{0,\veps}\,\phi(0)\,.
$$
From this relation, letting $\veps\ra0$, by uniform bounds and convergence properties established above, we deduce that
$\int^T_0\int_\Omega u\cdot\nabla\phi\,=\,0$, which in turn implies
\begin{equation} \label{constr:div}
\div\,u\;\equiv\:0\qquad\qquad\mbox{ almost everywhere in }\qquad [0,T]\times\Omega\,.
\end{equation}

Let us consider now the velocity field: for any $\psi\in\mc{D}\bigl([0,T[\,\times\Omega;\R^3\bigr)$, we use $\veps\,\psi$
as a test-function in the momentum equation \eqref{eq:weak-momentum}. By use of uniform bounds, we see that the
only integrals which do not go to $0$ are the ones involving the pressure $\Pi$, the Coriolis force $\mf{C}$ and the capillarity term:
let us focus on them.

We start by dealing with the classical part $P$ of the pressure: we can write
$$
\frac{1}{\veps}\int^T_0\!\!\!\int_\Omega\nabla P(\rho_\veps)\cdot\psi\,=\,-\frac{1}{\veps}\int^T_0\!\!\!\int_\Omega
\biggl(P(\rho_\veps)-P(1)-P'(1)\left(\rho_\veps-1\right)\biggr)\div\psi+
\frac{1}{\veps}\,P'(1)\int^T_0\!\!\!\int_\Omega\nabla\rho_\veps\cdot\psi\,.
$$
The quantity $P(\rho_\veps)-P(1)-P'(1)\left(\rho_\veps-1\right)$ can be controlled by the internal
energy $h(\rho_\veps)$: since $h(\rho_\veps)/\veps^2$ is uniformly bounded in $L^\infty_T\bigl(L^1\bigr)$
(see Propositions \ref{p:E} and \ref{p:sing_b-E}), the first integral tends to $0$ for $\veps\ra0$. Hence, thanks also to
\eqref{eq:conv-r} we find
$$
\frac{1}{\veps}\int^T_0\int_\Omega\nabla P(\rho_\veps)\cdot\psi\;\longrightarrow\;
P'(1)\int^T_0\int_\Omega\nabla r\cdot\psi\,.
$$
An analogous decomposition allows us to treat also the cold part of the pressure: we find that
$(1/\veps)\int^T_0\int_\Omega\nabla P_c(\rho_\veps)\cdot\psi$ converges to $P_c'(1)\int^T_0\int_\Omega\nabla r\cdot\psi$. Therefore,
since $P'(1)=P'_c(1)=1/2$, in the end we infer the convergence, in the limit for $\veps$ going to $0$,
$$
\frac{1}{\veps}\int^T_0\int_\Omega\nabla\Pi(\rho_\veps)\cdot\psi\;\longrightarrow\;
\int^T_0\int_\Omega\nabla r\cdot\psi\,.
$$

As for the rotation term, uniform bounds immediately imply
$$
\int^T_0\int_\Omega\mf{C}(\rho_\veps,u_\veps)\cdot\psi\,=\,
\int^T_0\int_\Omega\mf{c}\,e^3\times\rho_\veps\,u_\veps\cdot\psi\;\longrightarrow\;
\int^T_0\int_\Omega\mf{c}\,e^3\times u\cdot\psi\,.
$$

Finally, let us consider the integrals coming from the capillarity tensor. The former one can be treated writing, as usual,
$\rho_\veps\,=\,1\,+\,(\rho_\veps-1)$: therefore  we get
$$
\frac{1}{\veps}\int^T_0\int_\Omega\rho_\veps\,\Delta\rho_\veps\,\cdot\,\div\psi
\;\longrightarrow\;\int^T_0\int_\Omega\Delta r\,\cdot\,\div\psi\,,
$$
where we used the uniform $L^2_T\bigl(L^2\bigr)$ bound on $\Delta\rho_\veps$ and the fact that $\rho_\veps-1$ is
of order $\veps$ in e.g. $L^\infty_T\bigl(L^2\bigr)$.
On the other hand, since also $\nabla\rho_\veps$ is of order $\veps$ in $L^2_T\bigl(L^2\bigr)$, one easily gets
$$
\frac{1}{\veps}\int^T_0\int_\Omega\Delta\rho_\veps\,\nabla\rho_\veps\,\cdot\,\psi\;\longrightarrow\;0\,.
$$

Let us sum up all these informations: from the momentum equation, when $\veps\ra0$, we have found that the limit point
$(r,u)$ has to verify the relation
\begin{equation} \label{constr:u-r}
\mf{c}(x^h)\,e^3\,\times\,u\,+\,\nabla\bigl(\Id\,-\,\Delta\bigr)r\,=\,0\,,
\end{equation}
which means in particular
$$
\begin{cases}
 \d_1\bigl(\Id\,-\,\Delta\bigr)r\,=\,\mf{c}\,u^2 \\
 \d_2\bigl(\Id\,-\,\Delta\bigr)r\,=\,-\,\mf{c}\,u^1 \\
 \d_3\bigl(\Id\,-\,\Delta\bigr)r\,=\,0\,.
\end{cases}
$$
This relation immediately implies that $\bigl(\Id\,-\,\Delta\bigr)r\,=\,\bigl(\Id\,-\,\Delta\bigr)r(x^h)$ depends just on the horizontal
variables. Hence, $\d_3r$ fulfills the elliptic equation $-\Delta\d_3r+\d_3r\,=\,0$  in $\Omega$:
by passing in Fourier variables on $\R^2\times\mbb{T}^1$, we deduce that
\begin{equation} \label{constr:r}
\d_3r\,\equiv\,0\qquad\qquad\Longrightarrow\qquad\qquad r\,=\,r(x^h)\,.
\end{equation}

On the other hand, by \eqref{constr:u-r} we get that also $\mf{c}\,u^h$ has to depend just on the horizontal variables;
taking the $\d_3$ derivative (since $\mf{c}\,=\,\mf{c}(x^h)$ and $\mf{c}\neq0$ almost everywhere on $\R^2$) implies that
$u^h\,=\,u^h(x^h)$.

From this property and the divergence-free condition \eqref{constr:div}, it follows also that $\d_3^2u^3\,\equiv\,0$, i.e.
$\d_3u^3\,=\,\d_3u^3(x^h)$. On the other hand, by periodicity $\int_{\mbb{T}^1}\d_3u^3\,dx^3\,=\,0$, which entails $\d_3u^3\,=\,0$
by decomposition \eqref{dec:vert-av}, and then $u^3\,=\,u^3(x^h)$. By use of the complete-slip
boundary conditions \eqref{eq:bc}, in the end we deduce that $u^3\,\equiv\,0$, which in turn gives
\begin{equation} \label{constr:u_1}
 u\,=\,\bigl(u^h(x^h),0\bigr)\,,\qquad\qquad\mbox{ with }\qquad \div_{\!h}u^h\,=\,0\,.
\end{equation}

Finally, let us apply the ${\rm rot}$ operator to equation \eqref{constr:u-r}: we obtain
\begin{equation} \label{constr:u_2}
\div_{\!h}\bigl(\mf{c}(x^h)\,u^h\bigr)\,\equiv\,0\qquad\qquad\Longrightarrow\qquad\qquad
u^h\cdot\nabla_h\mf{c}\,=\,0\,,
\end{equation}
where us have used the just proved property \eqref{constr:u_1}.
\end{proof}

\begin{rem} \label{r:limit-dens}
Notice that, by the previous proposition and the fact that $r,u\,\in\,L^\infty\bigl(\R_+;L^2(\Omega)\bigr)$, we get that
actually $r\in L^\infty\bigl(\R_+;H^3(\Omega)\bigr)$.
\end{rem}

\subsection{Passing to the limit in the weak formulation} \label{ss:variable}

By Proposition \ref{p:TP}, we can define the singular perturbation operator
$$
\begin{array}{lccc}
\wtilde{\mc{A}}\,: & L^2(\Omega)\;\times\;L^2(\Omega) & \longrightarrow & H^{-1}(\Omega)\;\times\;H^{-3}(\Omega) \\
& \bigl(\,r\;,\;V\,\bigr) & \mapsto & \Bigl(\div V\;,\;\mf{c}(x^h)\,e^3\times V\,+\nabla\bigl(\Id-\Delta\bigr)r\Bigr)\,,
\end{array}
$$
which has variable coefficients: so, spectral analysis tools (employed in \cite{F}) are out of use here. Then,
in order to prove convergence in the weak formulation of our equations, we will resort then to a compensated compactness argument.

This technique goes back to works by P.-L. Lions and Masmoudi about incompressible limit (see e.g. \cite{L-M}-\cite{L-M_CRAS}); it was introduced for
the first time in the context of highly rotating fluids by Gallagher and Saint-Raymond in \cite{G-SR_2006}. There, they dealt with incompressible Navier-Stokes equations
with variable rotation axis. With the same strategy, in paper \cite{F-G-GV-N} Feireisl, Gallagher, G\'erard-Varet and Novotn\'y studied the case of
a compressible Navier-Stokes system with Earth rotation and centrifugal force, when the fixed limit density profile is supposed non-constant.

Most of our analysis will follow the one performed in \cite{G-SR_2006}-\cite{F-G-GV-N}. Notice however that here we have to deal with an
additional term, coming from the presence of capillarity.

\medbreak
Let us consider tests functions $\phi\,\in\,\mc{D}\bigl([0,T[\,\times\Omega\bigr)$ and
$\psi\,\in\,\mc{D}\bigl([0,T[\,\times\Omega;\R^3\bigr)$ such that the couple $(\phi,\psi)$ belongs to
${\rm Ker}\,\wtilde{\mc A}$. Recall that, by Proposition \ref{p:TP}, they satisfy
$$
\div\psi\,=\,0\qquad\mbox{ and }\qquad \mf{c}(x^h)\,e^3\times\psi\,+\,\nabla\bigl(\Id-\Delta\bigr)\phi\,=\,0\,.
$$
In particular, $\psi=\bigl(\psi^h,0\bigr)$ and $\phi$ just depend on the horizontal variable $x^h\in\R^2$ and they are linked
by the relation $\mf{c}\,\psi^h\,=\,\nabla^\perp_h\bigl(\Id-\Delta_h\bigr)\phi$. Finally, combining this property
with the divergence-free condition for $\psi$, we infer also that
$\nabla^\perp_h\bigl(\Id-\Delta_h\bigr)\phi\cdot\nabla_h\mf{c}\,=\,0$.

First of all, we evaluate the momentum equation on such a $\psi$: taking into account the previous properties, we end up with
\begin{eqnarray}
\int_\Omega\rho_{0,\veps}\,u_{0,\veps}\cdot\psi(0)\,dx & = & 
\int^T_0\!\!\int_\Omega\biggl(-\rho_\veps\,u_\veps\cdot\d_t\psi\,-\,\rho_\veps\,u_\veps\otimes u_\veps:\nabla\psi\,+ \label{eq:sing_weak} \\
& & 
+\,\nu\,\rho_\veps\,Du_\veps:\nabla\psi\,+\,\frac{1}{\veps^2}\,\Delta\rho_\veps\,\nabla\rho_\veps\cdot\psi\,+\,
\frac{\mf{c}(x^h)}{\veps}\,e^3\times\rho_\veps\,u_\veps\cdot\psi\biggr)dx\,dt. \nonumber
\end{eqnarray}
Notice that the $\d_t$ and viscosity terms do not present any difficulty in passing to the limit. On the other hand, the rotation
term can be handled by use of the weak form of the mass equation, tested on
$\wtilde{\phi}\,=\,\bigl(\Id-\Delta_h\bigr)\phi$: we get
\begin{eqnarray*}
\frac{1}{\veps}\int^T_0\!\!\int_\Omega\mf{c}(x^h)\,e^3\times\rho_\veps\,u_\veps\cdot\psi & = & 
-\,\frac{1}{\veps}\int^T_0\!\!\int_\Omega\mf{c}(x^h)\,\rho_\veps\,u^h_\veps\cdot\left(\psi^h\right)^\perp \\
& = & \frac{1}{\veps}\int^T_0\!\!\int_\Omega\rho_\veps\,u^h_\veps\cdot\nabla_h\wtilde{\phi}\;=\;
-\int_\Omega r_{0,\veps}\,\wtilde{\phi}(0)\,-\,\int^T_0\!\!\int_\Omega r_\veps\,\d_t\wtilde{\phi}\,,
\end{eqnarray*}
which obviously converges in the limit $\veps\ra0$.

In order to deal with the transport and the capillarity terms, we want to use the structure of the system. Therefore, first of all we need
to introduce a regularization of our solutions.

\subsubsection{Regularization and description of the oscillations}

Let us set $V_\veps\,:=\,\rho_\veps\,u_\veps$. We can write system \eqref{eq:NSK-sing} in the form
\begin{equation} \label{eq:ac-w_c}
\begin{cases}
\veps\,\d_tr_\veps\,+\,\div\,V_\veps\,=\,0 \\[1ex]
\veps\,\d_tV_\veps\,+\,\Bigl(\mf{c}(x^h)\,e^3\times V_\veps\,+\,\nabla\bigl(\Id\,-\,\Delta\bigr)r_\veps\Bigr)\,=\,\veps\,f_\veps\,,
\end{cases}
\end{equation}
where we have defined $f_\veps$ by the formula
\begin{eqnarray}
f_\veps & := & -\,\div\left(\rho_\veps u_\veps\otimes u_\veps\right)\,+\,\nu\,\div\left(\rho_\veps Du_\veps\right)\,- 
\label{eq:f_veps} \\
& & \qquad\qquad -\,\frac{1}{\veps^2}\nabla\Bigl(\Pi(\rho_\veps)-\Pi(1)-\Pi'(1)\left(\rho_\veps-1\right)\Bigr)\,+\,
\frac{1}{\veps^2}\bigl(\rho_\veps-1\bigr)\,\nabla\Delta\rho_\veps\,. \nonumber
\end{eqnarray}
Equations \eqref{eq:ac-w_c} have to be read in the weak sense, of course. In particular, from writing
\begin{eqnarray*}
\langle f_\veps,\psi\rangle & := & \int_\Omega\biggl(\rho_\veps u_\veps\otimes u_\veps:\nabla\psi\,-\,
\nu\,\rho_\veps Du_\veps:\nabla\psi\,-\,\frac{1}{\veps^2}\,\Delta\rho_\veps\,\nabla\rho_\veps\cdot\psi\,- \\
& & \qquad\qquad -\,\frac{1}{\veps^2}\,(\rho_\veps-1)\,\Delta\rho_\veps\,\div\psi\,+\,
\frac{1}{\veps^2}\Bigl(\Pi(\rho_\veps)-\Pi(1)-\Pi'(1)\left(\rho_\veps-1\right)\Bigr)\div\psi\biggr)dx \\
& = & \int_\Omega\Bigl(f^1_\veps:\nabla\psi\,+\,f^2_\veps:\nabla\psi\,+\,f^3_\veps\cdot\psi\,+\,f^4_\veps\,\div\psi\,+\,
f^5_\veps\,\div\psi\Bigr)\,dx
\end{eqnarray*}
and by a systematic use of uniform bounds (recall Propositions \ref{p:sing_b-E} and \ref{p:sing_b-BD}),
we can easily see that $\bigl(f^1_\veps\bigr)_\veps$ and $\bigl(f^5_\veps\bigr)_\veps$ are uniformly bounded in $L^\infty_T\bigl(L^1\bigr)$,
and so is $\bigl(f^2_\veps\bigr)_\veps$ in $L^2_T\bigl(L^2\bigr)$; finally, $\bigl(f^3_\veps\bigr)_\veps$ and $\bigl(f^4_\veps\bigr)_\veps$
are bounded in $L^2_T\bigl(L^1\bigr)$.

Therefore, we deduce the uniform boundedness of $\bigl(f_\veps\bigr)_\veps$ in the space
$L^2_T\bigl(H^{-1}(\Omega)\,+\,W^{-1,1}(\Omega)\bigr)$, and then in particular in $L^2_T\bigl(H^{-s}(\Omega)\bigr)$ for any
$s>5/2$.

\medbreak
Now, for any $M>0$, let us consider the low-frequency cut-off operator $S_M$ of a Littlewood-Paley decomposition,
as introduced in \eqref{eq:low-freq} below, and let us define
$$
r_{\veps,M}\,:=\,S_Mr_\veps\qquad\qquad\mbox{ and }\qquad\qquad
V_{\veps,M}\,:=\,S_MV_\veps\,.
$$
The following result hods true.
\begin{prop} \label{p:regular}
For any fixed time $T>0$ and compact set $K\subset\Omega$, the  following convergence properties hold, in the limit for $M\longrightarrow+\infty$:
\begin{equation} \label{reg:convergence}
\begin{cases}
\; \sup_{\veps>0}\left\|r_\veps\,-\,r_{\veps,M}\right\|_{L^\infty_T(H^s(K))\,\cap\,L^2_T(H^{1+s}(K))}\,\longrightarrow\,0
\qquad\qquad\forall\; s<1 \\[1ex]
\; \sup_{\veps>0}\left\|V_\veps\,-\,V_{\veps,M}\right\|_{L^2_T(H^{-s}(K))}\,\longrightarrow\,0
\qquad\qquad\forall\; s>0\,.
\end{cases}
\end{equation}

Moreover, for any $M>0$, the couple $\bigl(r_{\veps,M}\,,\,V_{\veps,M}\bigr)$ satisfies the approximate wave equations
\begin{equation} \label{reg:approx-w}
\begin{cases}
\veps\,\d_tr_{\veps,M}\,+\,\div\,V_{\veps,M}\,=\,0 \\[1ex]
\veps\,\d_tV_{\veps,M}\,+\,\Bigl(\mf{c}(x^h)\,e^3\times V_{\veps,M}\,+\,\nabla\bigl(\Id-\Delta\bigr)r_{\veps,M}\Bigr)\,=\,
\veps\,f_{\veps,M}\,+\,g_{\veps,M}\,,
\end{cases}
\end{equation}
where $\bigl(f_{\veps,M}\bigr)_{\veps}$ and $\bigl(g_{\veps,M}\bigr)_{\veps}$ are families of smooth functions satisfying
\begin{equation} \label{reg:source}
\begin{cases}
\; \sup_{\veps>0}\left\|f_{\veps,M}\right\|_{L^2_T(H^{s}(K))}\,\leq\,C(s,M)
\qquad\qquad\forall\; s\geq0 \\[1ex]
\; \sup_{\veps>0}\left\|g_{\veps,M}\right\|_{L^2_T(H^1(K))}\,\longrightarrow\,0
\qquad\qquad\mbox{ for }\quad M\ra+\infty\,,
\end{cases}
\end{equation}
where the constant $C(s,M)$ depends on the fixed values of $s\geq0$, $M>0$.
\end{prop}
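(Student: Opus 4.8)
The plan is to obtain the convergence properties \eqref{reg:convergence} directly from the uniform bounds of Subsections \ref{ss:uniform}, together with the standard spectral localization estimates recalled in Appendix \ref{app:LP}. For the density, Proposition \ref{p:sing_b-BD} gives that $(r_\veps)_\veps$ is uniformly bounded in $L^\infty_T(H^1(\Omega))\cap L^2_T(H^2(\Omega))$; hence $\mathrm{Id}-S_M$ loses a fixed power of $2^{-M}$ when measured in a weaker norm, i.e. $\|r_\veps-r_{\veps,M}\|_{H^s}\lesssim 2^{-M(1-s)}\|r_\veps\|_{H^1}$ and similarly in $L^2_T(H^{1+s})$ against the $L^2_T(H^2)$ bound, so that the $\veps$-uniform suprema tend to $0$ as $M\to+\infty$ for every $s<1$. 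For $V_\veps=\rho_\veps u_\veps$ we use the uniform bound in $L^\infty_T(L^{3/2}_{loc})\cap L^2_T(L^2)$ from \eqref{sing-b:D_rho-u}: localizing to the compact set $K$, $(V_\veps)_\veps$ is uniformly bounded in $L^2_T(L^2(K))\hookrightarrow L^2_T(H^{-s_0}(K))$ for suitable $s_0$, and applying $\mathrm{Id}-S_M$ again produces a decaying factor $2^{-M(s-\delta)}$ for any $0<\delta<s$, whence the second line of \eqref{reg:convergence}. The cut-off on a bounded domain with boundary requires a little care: one either works after the periodic extension of Remark \ref{r:period-bc} (so that $x^3\in\mathbb T^1$ and only the $\R^2$ directions are genuinely unbounded), performing the Littlewood--Paley truncation in the full variables, or one first multiplies by a fixed cut-off adapted to $K$; the commutator between $S_M$ and this spatial cut-off is lower order and contributes only to the remainder.

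For the approximate system \eqref{reg:approx-w}, I would simply apply $S_M$ to the weak formulation \eqref{eq:ac-w_c}. The mass equation is linear with constant coefficients, so $S_M$ commutes with it exactly, giving the first line of \eqref{reg:approx-w} with no error. In the momentum equation, the term $\nabla(\mathrm{Id}-\Delta)r_\veps$ and the time derivative also commute with $S_M$; the genuinely problematic term is the Coriolis term $\mf c(x^h)\,e^3\times V_\veps$, because $\mf c$ is only $W^{1,\infty}$ (with $\nabla_h\mf c$ merely $\mu$-continuous). Writing
$$
S_M\bigl(\mf c\,e^3\times V_\veps\bigr)\,=\,\mf c\,e^3\times V_{\veps,M}\,+\,\bigl[S_M,\mf c\,e^3\times\bigr]V_\veps\,,
$$
the commutator is exactly the source term $g_{\veps,M}$, while $\veps f_{\veps,M}:=\veps S_M f_\veps$ with $f_\veps$ as in \eqref{eq:f_veps}. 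The bound on $f_{\veps,M}$ in \eqref{reg:source} is immediate: $S_M$ maps the negative-order space $L^2_T(H^{-s}(\Omega))$, in which $(f_\veps)_\veps$ is bounded, into $L^2_T(H^s(K))$ with a constant $C(s,M)$ depending only on $s$ and $M$ (no $\veps$).

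The heart of the matter is therefore the estimate $\sup_{\veps>0}\|g_{\veps,M}\|_{L^2_T(H^1(K))}\to 0$, i.e.\ a commutator estimate $\|[S_M,\mf c]\,V_\veps\|_{H^1}\to 0$ uniformly in $\veps$. Here $V_\veps$ is only bounded in $L^2_T(L^2(K))$, so we are asking a commutator with a $W^{1,\infty}$-type coefficient to \emph{gain} derivatives — which is false for a generic $S_M$ unless we use that $S_M$ is itself a smoothing operator of arbitrarily high order: $[S_M,\mf c]$ applied to an $L^2$ function lands in $H^1$ because $S_M$ contributes the regularity. The plan is to use the integral (convolution) representation of $S_M$ and the classical commutator identity
$$
\bigl([S_M,\mf c]\,V\bigr)(x)\,=\,\int \chi_M(x-y)\,\bigl(\mf c(y)-\mf c(x)\bigr)\,V(y)\,dy,
$$
with $\chi_M=2^{2M}\chi(2^M\cdot)$ (in the horizontal variables, the vertical one being periodic). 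Estimating $\nabla_x$ of this expression, one distributes the derivative either on $\chi_M$ — picking up a factor $2^M$ but an extra factor $|x-y|\lesssim 2^{-M}$ from $\mf c(y)-\mf c(x)$, giving an $O(1)$ kernel bound that however still must be shown to be \emph{small}; this is where $\mu$-continuity of $\nabla_h\mf c$ enters, refining $\mf c(y)-\mf c(x)=\nabla_h\mf c(x)\cdot(y-x)+o(|x-y|)$ and exploiting the vanishing first moment of $\chi$ — or on $\mf c$, using $\nabla\mf c\in L^\infty$ directly. Combining the two and integrating in $x$ over $K$ (plus in $t$ over $[0,T]$, against the $L^2_T(L^2)$ bound of $V_\veps$) yields $\|g_{\veps,M}\|_{L^2_T(H^1(K))}\le C\,\omega(2^{-M})$ for a modulus $\omega$ with $\omega(0^+)=0$, uniformly in $\veps$. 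I expect this commutator estimate, and in particular squeezing out the smallness (not just boundedness) of the $H^1$ norm from the admissible modulus of continuity $\mu$ of $\nabla_h\mf c$, to be the main technical obstacle; the rest is routine spectral localization.
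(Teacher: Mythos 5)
Your overall strategy is the same as the paper's: apply $S_M$ to the reformulated system \eqref{eq:ac-w_c}, set $f_{\veps,M}=S_Mf_\veps$ and $g_{\veps,M}=[\mf c,S_M](e^3\times V_\veps)$, deduce \eqref{reg:convergence} from the uniform bounds by spectral localization, bound $f_{\veps,M}$ from the boundedness of $(f_\veps)_\veps$ in $L^2_T(H^{-s})$, and reduce the statement to the $H^1$ smallness of the commutator term. That framing is correct, and so is your observation that $[S_M,\mf c]$ is smoothing. But your sketch of the $H^1$ estimate on $g_{\veps,M}$ has a genuine gap.

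First, there is no ``vanishing first moment of $\chi$'': one has $\int\d_k\chi(z)\,z_j\,dz=-\delta_{jk}\int\chi\neq0$, and it is precisely this \emph{non-vanishing} first moment of $\nabla\chi$ that cancels at leading order the term $-\nabla\mf c(x)\,S_MV_\veps(x)$ produced when the derivative falls on $\mf c$. The clean way to organize this is to integrate by parts in $y$ in your kernel formula, which gives exactly the paper's decomposition
\begin{equation*}
\d_j g_{\veps,M}\,=\,\bigl[\mf c\,,\,S_M\bigr]\,\d_j\bigl(e^3\times V_\veps\bigr)\,+\,\bigl[\d_j\mf c\,,\,S_M\bigr]\bigl(e^3\times V_\veps\bigr)\,.
\end{equation*}
Second, and more importantly, you only invoke the $L^2_T(L^2)$ bound on $V_\veps$, and that is not enough to conclude. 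The second commutator above is indeed handled by the $\mu$-continuity of $\nabla_h\mf c$ through Lemma~\ref{l:comm-less}, tested against $\|V_\veps\|_{L^2_T(L^2)}$, giving $O(\mu(2^{-M}))$; this is the part you have right. But the first commutator carries a derivative of $V_\veps$, and to make it small one must use the extra control $(DV_\veps)_\veps\subset L^2_T\bigl(L^1_{loc}\bigr)$ from \eqref{sing-b:D_rho-u}, fed into the $L^p$ commutator estimate of Lemma~\ref{l:comm_L^p} with $p_1=1$, $p_2=q=2$, yielding $O(2^{-M})$. If you do not bring in a gradient bound on $V_\veps$, what survives after the leading-order cancellation you describe is $\nabla\mf c$ times the convolution of $V_\veps$ with a mean-zero kernel at scale $2^{-M}$ of $L^1$ norm $O(1)$; this is uniformly bounded in $L^2_T(L^2)$ but has no uniform-in-$\veps$ smallness as $M\to+\infty$, since such operators do not tend to zero in operator norm on $L^2$. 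This missing ingredient is exactly the point the paper flags when it says it must generalize Lemma~3.3 of \cite{Masm} ``because here we have less available controls on the family $(V_\veps)_\veps$''.
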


\begin{proof}
Keeping in mind the characterization of $H^s$ spaces in terms of Littlewood-Paley decomposition (see Appendix \ref{app:LP}),
properties \eqref{reg:convergence} are straightforward consequences of the uniform bounds established in Subsection \ref{ss:uniform}.

Next, applying operator $S_M$ to \eqref{eq:ac-w_c} immediately gives us system \eqref{reg:approx-w},
where, denoting by $[\mc{P},\mc{Q}]$ the commutator between two operators $\mc P$ and $\mc Q$, we have set
$$
f_{\veps,M}\,:=\,S_Mf_\veps\qquad\qquad\mbox{ and }\qquad\qquad 
g_{\veps,M}\,:=\,\bigl[\mf{c}(x^h),S_M\bigr]\bigl(e^3\times V_\veps\bigr)\,.
$$
By these definitions and the uniform bounds on $\bigl(f_\veps\bigr)_\veps$, it is easy to verify the first property in \eqref{reg:source}.
As for the second one, we need to generalize the argument of Lemma 3.3 in \cite{Masm}, because here we have less available
controls on the family $\bigl(V_\veps\bigr)_\veps$.

First of all, by uniform bounds and Lemma \ref{l:commutator} we get
$$
\sup_{\veps>0}\left\|g_{\veps,M}\right\|_{L^2_T(L^2)}\,\leq\,C\,2^{-M}\,.
$$
As for the gradient, for any $1\leq j\leq 3$ we can write
$$
\d_jg_{\veps,M}\,=\,\left[\mf{c}\,,\,S_M\right]\d_j\left(e^3\times V_\veps\right)\,+\,
\left[\d_j\mf{c}\,,\,S_M\right]\left(e^3\times V_\veps\right)\,.
$$
In order to control the former term, we use Lemma \ref{l:comm_L^p} with $p_2=q=2$ and $p_1=1$. Recalling that, by \eqref{sing-b:D_rho-u},
$\left(DV_\veps\right)_\veps\subset L^2_T(L^1_{loc})$, for any compact $K\subset\Omega$ we get
$$
\sup_{\veps>0}\left\|\left[\mf{c}\,,\,S_M\right]\d_j\left(e^3\times V_\veps\right)\right\|_{L^2_T(L^2(K))}\,\leq\,C\,2^{-M}\,.
$$
For the latter term, instead, Lemma \ref{l:comm-less} gives us
$$
\sup_{\veps>0}\left\|\left[\d_j\mf{c}\,,\,S_M\right]\left(e^3\times V_\veps\right)\right\|_{L^2_T(L^2)}\,\leq\,C\,\mu(2^{-M})\,.
$$

In the end, choosing $\eta(M)=\max\left\{2^{-M},\mu(2^{-M})\right\}$ (which goes to $0$ when $M\ra+\infty$), we get
$$
\sup_{\veps>0}\left\|g_{\veps,M}\right\|_{L^2_T(H^1_{loc})}\,\leq\,C\,\eta(M)
$$
for a suitable constant $C>0$, and this completes the proof of the proposition.
\end{proof}

We also have an important decomposition for the approximated velocity fields.
\begin{prop} \label{p:decomp}
The following decompositions hold true: 
$$
V_{\veps,M}\,=\,\mbb{V}_{\veps,M}\,+\,\veps\,\mc{V}_{\veps,M}\qquad\qquad\mbox{ and }\qquad\qquad
DV_{\veps,M}\,=\,\mbb{D}_{\veps,M}\,+\,\veps\,\mc{D}_{\veps,M}\,,
$$
where, for any compact set $K\subset\Omega$ and any $s\geq0$ one has
$$
\begin{cases}
\left\|\mbb{V}_{\veps,M}\right\|_{L^2_T\bigl(L^2(K)\cap L^3(K)\bigr)}\,+\,
\left\|\mbb{D}_{\veps,M}\right\|_{L^2_T\bigl(L^2(K)\bigr)}\,\leq\,C(K) \\[1ex]
\left\|\mc{V}_{\veps,M}\right\|_{L^2_T\bigl(H^s(K)\bigr)}\,+\,
\left\|\mc{D}_{\veps,M}\right\|_{L^2_T\bigl(H^s(K)\bigr)}\,\leq\,C(K,s,M)\,,
\end{cases}
$$
for suitable positive constants $C(K)$, $C(K,s,M)$ depending just on the quantities in the brackets.
\end{prop}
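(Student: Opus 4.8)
The plan is to produce the two splittings already for $V_\veps=\rho_\veps u_\veps$ and $DV_\veps=D(\rho_\veps u_\veps)$ themselves, \emph{before} applying the cut-off: each of these quantities is to be written as a leading term, controlled in a fixed space-time Lebesgue space, plus a remainder carrying an explicit factor $\veps$ and controlled in a weaker Lebesgue space. One then applies $S_M$. Since the kernels of the operators $S_M$ are $L^1$-normalised uniformly in $M$, the family $\bigl\{S_M\bigr\}_M$ is bounded on every $L^p(\Omega)$, $1\le p\le\infty$, with norm independent of $M$; applied to the leading terms this gives the $(\veps,M)$-uniform bounds for $\mbb{V}_{\veps,M}$ and $\mbb{D}_{\veps,M}$. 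Applied to the $\veps$-remainders, and since $S_Mg$ has Fourier support in a ball of radius $\simeq 2^M$, Bernstein's inequality (Appendix \ref{app:LP}) turns any $L^q_T(L^p)$-bound on such a $g$, with $1\le p\le 2$, into an $L^q_T(H^s)$-bound for every $s\ge0$, with a constant depending on $s$ and $M$ — which yields the bounds for $\mc{V}_{\veps,M}$ and $\mc{D}_{\veps,M}$.

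For $V_\veps$ the leading term is $\rho_\veps^{3/2}u_\veps$, which by the last lines of Subsection \ref{ss:uniform} is uniformly bounded in $L^2_T(L^3)\subset L^2_T\bigl(L^2(K)\cap L^3(K)\bigr)$, so that $\mbb{V}_{\veps,M}:=S_M\bigl(\rho_\veps^{3/2}u_\veps\bigr)$ does the job. Writing $\rho_\veps=1+\veps r_\veps$,
$$V_\veps-\rho_\veps^{3/2}u_\veps\,=\,\bigl(1-\sqrt{\rho_\veps}\,\bigr)\,\rho_\veps u_\veps\,=\,-\,\veps\,\Bigl(\tfrac{1}{\veps}\bigl(\sqrt{\rho_\veps}-1\bigr)\Bigr)\,\rho_\veps u_\veps\,,$$
and, since $\bigl|\tfrac{1}{\veps}(\sqrt{\rho_\veps}-1)\bigr|\le|r_\veps|$ is uniformly bounded in $L^\infty_T(L^6)$ (recall $r_\veps\in L^\infty_T(H^1)\hookrightarrow L^\infty_T(L^6)$) while $\rho_\veps u_\veps$ is uniformly bounded in $L^2_T(L^2)$, the product is uniformly bounded in $L^2_T(L^{3/2})$. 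Setting $\mc{V}_{\veps,M}$ equal to its $S_M$-image gives $V_{\veps,M}=\mbb{V}_{\veps,M}+\veps\,\mc{V}_{\veps,M}$ together with the required bound on $\mc{V}_{\veps,M}$.

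For $DV_\veps$ one uses that $D$ commutes with $S_M$, so $DV_{\veps,M}=S_M(DV_\veps)$, and starts from $DV_\veps=\rho_\veps Du_\veps+\tfrac{1}{2}\bigl(u_\veps\otimes\nabla\rho_\veps+\nabla\rho_\veps\otimes u_\veps\bigr)$. The leading part is $\sqrt{\rho_\veps}\,Du_\veps\in L^2_T(L^2)$, giving $\mbb{D}_{\veps,M}:=S_M\bigl(\sqrt{\rho_\veps}\,Du_\veps\bigr)$. The difference $DV_\veps-\sqrt{\rho_\veps}\,Du_\veps=\bigl(\sqrt{\rho_\veps}-1\bigr)\bigl(\sqrt{\rho_\veps}\,Du_\veps\bigr)+\tfrac{\veps}{2}\bigl(u_\veps\otimes\nabla r_\veps+\nabla r_\veps\otimes u_\veps\bigr)$ carries a factor $\veps$ in each summand — in the first one exactly as in the previous paragraph, in the second one after the further splitting $u_\veps=\sqrt{\rho_\veps}\,u_\veps+\veps\,\wtilde{q}_\veps$, where $\sqrt{\rho_\veps}\,u_\veps\in L^\infty_T(L^2)$, $\wtilde{q}_\veps=\tfrac{1}{\veps}\bigl(\rho_\veps^{-1/2}-1\bigr)\sqrt{\rho_\veps}\,u_\veps$ is uniformly bounded thanks to the estimates of Subsection \ref{ss:unif-b}, and $\nabla r_\veps\in L^\infty_T(L^2)\cap L^2_T(H^1)$. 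Applying $S_M$ and Bernstein's inequality as above then yields $DV_{\veps,M}=\mbb{D}_{\veps,M}+\veps\,\mc{D}_{\veps,M}$ with the claimed bounds. The only genuine work, and the delicate point, is the bookkeeping of the mixed space-time Lebesgue norms of the various remainder terms: one pairs the factors by Hölder's inequality, using systematically the uniform estimates of Section \ref{s:bounds} — in particular that $\rho_\veps^{\pm1/2}-1$ is $O(\veps)$ in suitable Lebesgue spaces and that $\sqrt{\rho_\veps}\,u_\veps$, $\sqrt{\rho_\veps}\,Du_\veps$, $\nabla\sqrt{\rho_\veps}$ are uniformly controlled — and one has to check that on the compact set $K$ one never leaves the range of exponents for which $S_M$ and Bernstein's inequality may be applied, and that the time-integrability is the one stated. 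This is elementary but somewhat tedious.
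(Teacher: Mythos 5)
The first half of your argument, for the decomposition of $V_{\veps,M}$, coincides with the paper's: the leading term is $S_M\bigl(\rho_\veps^{3/2}u_\veps\bigr)$, bounded in $L^2_T\bigl(L^2\cap L^3\bigr)$, and the remainder $(1-\sqrt{\rho_\veps})\rho_\veps u_\veps$ is $O(\veps)$ in $L^2_T(L^{3/2})$; this part is fine, and your identification $\mbb{D}_{\veps,M}=S_M(\sqrt{\rho_\veps}\,Du_\veps)$ is also the one the paper makes.

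The second half, however, contains a genuine gap. Writing
$$
DV_\veps\,=\,\rho_\veps\,Du_\veps\,+\,\tfrac12\bigl(u_\veps\otimes\nabla\rho_\veps+\nabla\rho_\veps\otimes u_\veps\bigr)
$$
and pulling out $\nabla\rho_\veps=\veps\,\nabla r_\veps$ leaves you with the bare product $u_\veps\otimes\nabla r_\veps$. With the uniform bounds actually available, $u_\veps\in L^2_T\bigl(L^{3/2}_{loc}\bigr)\cap L^1_T\bigl(L^3_{loc}\bigr)$ and $\nabla r_\veps\in L^\infty_T(L^2)\cap L^2_T(L^6)$, any H\"older pairing gives at best $u_\veps\otimes\nabla r_\veps\in L^1_T\bigl(L^{6/5}_{loc}\bigr)$ or $L^2_T\bigl(L^{6/7}_{loc}\bigr)$: in the first case the time exponent is $1$, not $2$, and no subsequent application of $S_M$ or Bernstein can raise it; in the second the spatial exponent is below $1$, and the tools of Appendix \ref{app:LP} (and the boundedness of $S_M$ on $L^p$) simply do not apply. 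Your proposed further splitting $u_\veps=\sqrt{\rho_\veps}\,u_\veps+\veps\,\wtilde q_\veps$ handles the $\sqrt{\rho_\veps}\,u_\veps\otimes\nabla r_\veps$ part (it lands in $L^2_T(L^{3/2})$), but $\wtilde q_\veps$ itself lies only in $L^2_T(L^{3/2})\cap L^\infty_T(L^1)$, and $\wtilde q_\veps\otimes\nabla r_\veps$ hits exactly the same obstruction: any interpolation giving $L^2_T$ in time forces $1/q>1$ in space. The extra factor of $\veps$ you carry does not cure this, since the claimed bound on $\mc D_{\veps,M}$ is in $L^2_T(H^s(K))$, not in $L^1_T$, and $\veps$-smallness cannot be traded against a wrong time exponent.

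What the paper does differently is to \emph{differentiate the already-split form} $V_\veps=\rho_\veps^{3/2}u_\veps-(\sqrt{\rho_\veps}-1)\rho_\veps u_\veps$. Every derivative then falls on one of the two blocks, and in all resulting terms the velocity $u_\veps$ stays coupled to a positive power of $\rho_\veps$ (one sees $\sqrt{\rho_\veps}\,u_\veps$, $\rho_\veps u_\veps$, or $\sqrt{\rho_\veps}\,Du_\veps$), and whenever a $\nabla\rho_\veps$ appears it multiplies $\sqrt{\rho_\veps}\,u_\veps\in L^\infty_T(L^2)$, never bare $u_\veps$. Concretely, from $\d_j(\rho_\veps^{3/2}u_\veps)=\tfrac32\sqrt{\rho_\veps}\,u_\veps\,\d_j\rho_\veps+\sqrt{\rho_\veps}\,\d_ju_\veps+(\rho_\veps-1)\sqrt{\rho_\veps}\,\d_ju_\veps$ and $\veps^{-1}\d_j\bigl((\sqrt{\rho_\veps}-1)\rho_\veps u_\veps\bigr)=\tfrac{\d_j\rho_\veps}{2\veps}\sqrt{\rho_\veps}\,u_\veps+\tfrac{\sqrt{\rho_\veps}-1}{\veps}\d_j(\rho_\veps u_\veps)$, the problematic bare-$u_\veps$ factor never occurs, and the exponent bookkeeping closes using $\veps^{-1}\nabla\rho_\veps\in L^\infty_T(L^2)\cap L^2_T(L^6)$ together with \eqref{sing-b:D_rho-u} and the $L^p_T(L^\infty)$ bound on $\veps^{-1}(\rho_\veps-1)$ from Proposition \ref{p:sing_b-BD}. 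You should replace your product-rule start for $DV_\veps$ by this differentiation of the split form.
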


\begin{proof}
By definitions, we immediately have
$$
V_{\veps,M}\,=\,S_M\bigl(\rho_\veps\,u_\veps\bigr)\,=\,S_M\bigl(\rho_\veps^{3/2}\,u_\veps\bigr)\,-\,
S_M\bigl((\sqrt{\rho_\veps}-1)\,\rho_\veps\,u_\veps\bigr)\,.
$$
Thanks to the uniform bounds established in Subsection \ref{ss:uniform}, the first decomposition and the related estimates are easy
to be verified.

Let us now take a space derivative of $V_{\veps,M}$, splitted in accordance with the previous identity. Thanks to spectral localization,
the second term do not present any problem: indeed, for any $1\leq j\leq3$ one has
\begin{eqnarray*}
\veps^{-1}\,\d_j\bigl((\sqrt{\rho_\veps}-1)\,\rho_\veps\,u_\veps\bigr) & = & \d_j\frac{\sqrt{\rho_\veps}-1}{\veps}\;\rho_\veps\,u_\veps\,+\,
\frac{\sqrt{\rho_\veps}-1}{\veps}\,\d_j\bigl(\rho_\veps\,u_\veps\bigr) \\
& = & \frac{\d_j\rho_\veps}{2\,\veps}\,\sqrt{\rho_\veps}\,u_\veps\,+\,\frac{\sqrt{\rho_\veps}-1}{\veps}\,\d_j\bigl(\rho_\veps\,u_\veps\bigr)\,.
\end{eqnarray*}
For the former term, one uses Proposition \ref{p:sing_b-E} and the fact that (by Sobolev embeddings) $\veps^{-1}\nabla\rho_\veps\,\in\,L^2_T(L^6)$; for the latter,
instead, one has to take advantage of the estimate $|\sqrt{\rho}-1|\leq|\rho-1|$ together with property \eqref{sing-b:D_rho-u}.

As for $S_M\bigl(\rho_\veps^{3/2}\,u_\veps\bigr)$, instead, we have to proceed carefully. More precisely,
for any $1\leq  j\leq 3$ we start by writing
\begin{eqnarray*}
\d_j S_M\bigl(\rho_\veps^{3/2}\,u_\veps\bigr) & = & \frac{3}{2}\,S_M\bigl(\sqrt{\rho_\veps}\,u_\veps\,\d_j\rho_\veps\bigr)\,+\,
S_M\bigl(\rho_\veps^{3/2}\,\d_ju_\veps\bigr) \\[1ex]
& = & \frac{3}{2}\,S_M\bigl(\sqrt{\rho_\veps}\,u_\veps\,\d_j\rho_\veps\bigr)\,+\,S_M\bigl(\sqrt{\rho_\veps}\,\d_ju_\veps\bigr)\,+\,
S_M\bigl((\rho_\veps-1)\,\sqrt{\rho_\veps}\,\d_ju_\veps\bigr)\,.
\end{eqnarray*}
Recalling now that
$\bigl(\sqrt{\rho_\veps}\,Du_\veps\bigr)_\veps\,\subset\,L^2_T\bigl(L^2(\Omega)\bigr)$, we can set $\mbb{D}_{\veps,M}\,=\,S_M\bigl(\sqrt{\rho_\veps}\,\d_ju_\veps\bigr)$.
Indeed, since $\bigl(\veps^{-1}\,(\rho_\veps-1)\bigr)_\veps\,\subset\,L^\infty_T\bigl(L^6(\Omega)\bigr)$ and 
$\bigl(\veps^{-1}\,\nabla\rho_\veps\bigr)_\veps\,\subset\,L^\infty_T\bigl(L^2(\Omega)\bigr)$,
the other two terms are of order $\veps$. So we can include them into the remainder $\mc{D}_{\veps,M}$.

Hence, the proposition is now proved.
\end{proof}

\subsubsection{The capillarity term}

First of all, let us deal with the surface tension term in \eqref{eq:sing_weak}. Notice that it can be rewritten as
$\;\int^T_0\int_\Omega\Delta r_\veps\,\nabla r_\veps\cdot\psi\;$, for any smooth test function $\psi$.

Thanks to the next lemma, we reconduct ourselves to study the convergence in the case of regular density functions.
\begin{lemma} \label{l:capill_approx}
For any $\psi\,\in\,\mc{D}\bigl([0,T[\,\times\Omega;\R^3\bigr)$, we have
$$
\lim_{M\ra+\infty}\,\limsup_{\veps\ra0}\,\left|\int^T_0\!\!\int_\Omega\Delta r_\veps\;\nabla r_\veps\,\cdot\,\psi\,dx\,dt\,-\,
\int^T_0\!\!\int_\Omega \Delta r_{\veps,M}\; \nabla r_{\veps,M}\,\cdot\,\psi\,dx\,dt\right|\,=\,0\,.
$$
\end{lemma}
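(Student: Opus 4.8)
The plan is to prove Lemma \ref{l:capill_approx} by splitting the difference of the two integrals into two telescoping pieces, using $\Delta r_\veps\,\nabla r_\veps - \Delta r_{\veps,M}\,\nabla r_{\veps,M} = (\Delta r_\veps - \Delta r_{\veps,M})\,\nabla r_\veps + \Delta r_{\veps,M}\,(\nabla r_\veps - \nabla r_{\veps,M})$, and then estimate each term in an appropriate space-time norm, uniformly in $\veps$, so that the regularization error goes to zero as $M\to+\infty$. Since $\psi$ is smooth with compact support, say $\Supp\psi\subset[0,T[\,\times K$ with $K\subset\Omega$ compact, all integrals are localized to $K$ and I can freely use the local norms from the uniform bounds of Subsection \ref{ss:uniform} and from Proposition \ref{p:regular}.

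For the first term, I would write
\[
\left|\int^T_0\!\!\int_\Omega(\Delta r_\veps-\Delta r_{\veps,M})\,\nabla r_\veps\cdot\psi\,dx\,dt\right|\,\leq\,
\|\psi\|_{L^\infty_T(L^\infty)}\,\|\Delta r_\veps-\Delta r_{\veps,M}\|_{L^2_T(L^2(K))}\,\|\nabla r_\veps\|_{L^2_T(L^2(K))}\,,
\]
and similarly, using Cauchy--Schwarz in the other order, control the second term by
$\|\psi\|_{L^\infty_T(L^\infty)}\,\|\Delta r_{\veps,M}\|_{L^2_T(L^2(K))}\,\|\nabla r_\veps-\nabla r_{\veps,M}\|_{L^2_T(L^2(K))}$.
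Here $\|\nabla r_\veps\|_{L^2_T(L^2(K))}$ is bounded uniformly in $\veps$ (indeed by \eqref{eq:conv-r}, $r_\veps$ is bounded in $L^2_{loc}(\R_+;H^2(\Omega))$, in particular in $L^\infty_T(H^1)$, so $\nabla r_\veps$ is even bounded in $L^\infty_T(L^2)$), and $\|\Delta r_{\veps,M}\|_{L^2_T(L^2(K))}\leq\|\Delta r_\veps\|_{L^2_T(L^2(K))}$ is bounded uniformly in $\veps$ and $M$ since $S_M$ is a Fourier multiplier bounded on $L^2$. It therefore suffices to show that $\|\Delta r_\veps-\Delta r_{\veps,M}\|_{L^2_T(L^2(K))}$ and $\|\nabla r_\veps-\nabla r_{\veps,M}\|_{L^2_T(L^2(K))}$ both tend to $0$ as $M\to+\infty$, uniformly in $\veps$. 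But this is exactly what the first line of \eqref{reg:convergence} in Proposition \ref{p:regular} provides: taking $s$ with $1<s<1$... — more precisely, $\sup_{\veps>0}\|r_\veps-r_{\veps,M}\|_{L^2_T(H^{1+s}(K))}\to 0$ for every $s<1$, and choosing any such $s\geq 1$ — wait, one cannot take $s\geq1$ there. So instead I use that $\nabla r_\veps-\nabla r_{\veps,M}$ is controlled in $L^2_T(H^s(K))$ for $s<1$ (hence in $L^2_T(L^2(K))$ taking $s=0$), and for the second derivatives I invoke $\sup_{\veps>0}\|r_\veps-r_{\veps,M}\|_{L^2_T(H^{1+s}(K))}\to 0$ with, say, $s=0$, which gives $\|\nabla^2 r_\veps-\nabla^2 r_{\veps,M}\|_{L^2_T(H^{-1}... )}$ — no. Let me just take $s$ close to $1$ below $1$: then $H^{1+s}$ embeds the $L^2$-norm of second derivatives only if $1+s\geq 2$, i.e. $s\geq 1$, which is the borderline excluded case.

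The honest resolution — and the step I expect to be the \emph{main obstacle} — is that the bound on $\Delta r_\veps - \Delta r_{\veps,M}$ in $L^2_T(L^2(K))$ is \emph{not} uniformly small in $M$, because $r_\veps$ is only uniformly bounded in $L^2_T(H^2)$, not better. So one cannot put all the loss on the $\Delta$ factor. The standard trick, which I would carry out, is to distribute one derivative by integration by parts before estimating: rewrite $\int\Delta r_\veps\,\nabla r_\veps\cdot\psi = \frac12\int\nabla|\nabla r_\veps|^2\cdot\psi - \int\nabla r_\veps\cdot(\nabla r_\veps\cdot\nabla)\psi$... actually even simpler, use $\Delta r_\veps\,\nabla r_\veps = \div(\nabla r_\veps\otimes\nabla r_\veps) - \tfrac12\nabla|\nabla r_\veps|^2$, integrate by parts against $\psi$ to move all derivatives onto $\psi$, obtaining
\[
\int^T_0\!\!\int_\Omega\Delta r_\veps\,\nabla r_\veps\cdot\psi\,dx\,dt
= -\int^T_0\!\!\int_\Omega\Bigl(\nabla r_\veps\otimes\nabla r_\veps:\nabla\psi\,-\,\tfrac12\,|\nabla r_\veps|^2\,\div\psi\Bigr)\,dx\,dt\,,
\]
and the analogous identity for $r_{\veps,M}$. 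Now the difference is a sum of terms quadratic in $\nabla r_\veps,\nabla r_{\veps,M}$ tested against $\nabla\psi$, each controlled by $\|\psi\|_{C^1}\,\big(\|\nabla r_\veps\|_{L^2_T(L^2(K))}+\|\nabla r_{\veps,M}\|_{L^2_T(L^2(K))}\big)\,\|\nabla r_\veps-\nabla r_{\veps,M}\|_{L^2_T(L^2(K))}$. The first factor is uniformly bounded in $\veps,M$, and by \eqref{reg:convergence} (with $s=0<1$) the last factor tends to $0$ as $M\to+\infty$ uniformly in $\veps$. Taking $\limsup_{\veps\to0}$ and then $\lim_{M\to+\infty}$ concludes the proof. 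I would present the integration-by-parts identity first (noting it is legitimate because $r_\veps,r_{\veps,M}$ are smooth enough in space, by Corollary \ref{c:F} and the definition of $S_M$, and $\psi$ has compact support so no boundary terms arise on $\R^2\times\mbb{T}^1$), then the two-term telescoping estimate, then the appeal to Proposition \ref{p:regular}.
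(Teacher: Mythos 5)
Your proof is correct, and it follows a genuinely different route from the paper's. You correctly identify the key obstruction: since $r_\veps$ is only uniformly bounded in $L^2_T(H^2)$, the quantity $\Delta(\Id-S_M)r_\veps$ is \emph{not} small in $L^2_T(L^2)$, so one cannot place all the loss of regularity on the Laplacian factor. The paper resolves this by a two-stage telescoping: it first replaces $\nabla r_\veps$ by $\nabla r_{\veps,M}$ (where smallness of $\nabla(\Id-S_M)r_\veps$ in $L^2$ suffices), and then, for the remaining term $\int\Delta(\Id-S_M)r_\veps\,\nabla r_{\veps,M}\cdot\psi$, it uses the self-adjointness of $\Id-S_M$ on $L^2$ to transfer the high-frequency projector onto the \emph{test side} $\nabla r_{\veps,M}\cdot\psi$, which has one extra derivative available (uniformly in $\veps,M$), and invokes Lemma \ref{l:density}$(iii)$ to get the uniform decay. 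Your approach instead integrates by parts once, using the algebraic identity $\Delta r\,\nabla r=\div(\nabla r\otimes\nabla r)-\tfrac12\nabla|\nabla r|^2$, so that the integrand becomes purely quadratic in $\nabla r_\veps$ (resp.\ $\nabla r_{\veps,M}$); the difference of the two integrals is then a sum of products of a bounded factor and $\nabla r_\veps-\nabla r_{\veps,M}$, which is small in $L^2_T(L^2(K))$ uniformly in $\veps$ by the $s=0$ case of \eqref{reg:convergence}. Your argument is more symmetric and arguably shorter, since you need neither the self-adjointness trick nor Lemma \ref{l:density}$(iii)$; what it costs is an integration by parts, which you rightly note is harmless here (smoothness of $r_\veps$ in space and compact support of $\psi$ in $\R^2\times\mbb{T}^1$). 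Both proofs are valid and rest on the same estimates from Subsection \ref{ss:uniform} and Proposition \ref{p:regular}.
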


\begin{proof}
Let us fix $M>0$: we can write
$$
\int^T_0\!\!\int_\Omega\Delta r_\veps\;\nabla r_\veps\cdot\psi\,=\,\int^T_0\!\!\int_\Omega\Delta r_\veps\;\nabla r_{\veps,M}\cdot\psi\,+\,
\int^T_0\!\!\int_\Omega\Delta r_\veps\;\nabla\bigl(\Id-S_M\bigr)r_\veps\cdot\psi\,.
$$
By the uniform $L^2_T\bigl(L^2\bigr)$ bounds on the family $\bigl(\Delta r_\veps\bigr)_\veps$ and the first property in
\eqref{reg:convergence}, we get that
$$
\left|\int^T_0\!\!\int_\Omega\Delta r_\veps\;\nabla\bigl(\Id-S_M\bigr)r_\veps\cdot\psi\right|\,\leq\,C\,\delta(M)\,,
$$
for some positive constant $C$, independent of $\veps$ and $M$, and some function $\delta(M)$ such that
$\delta(M)\longrightarrow0$ for $M\ra+\infty$.

On the other hand, the former term in the right-hand side of the previous identity can be written as
$$
\int^T_0\!\!\int_\Omega\Delta r_\veps\;\nabla r_{\veps,M}\cdot\psi\,=\,
\int^T_0\!\!\int_\Omega\Delta r_{\veps,M}\;\nabla r_{\veps,M}\cdot\psi\,+\,
\int^T_0\!\!\int_\Omega\Delta\bigl(\Id-S_M\bigr)r_\veps\;\nabla r_{\veps,M}\cdot\psi\,.
$$
For the last term, using that the operator $\Id-S_M$ is self-adjoint, we can estimate
$$
\left|\int^T_0\!\!\int_\Omega\Delta\bigl(\Id-S_M\bigr)r_\veps\;\nabla r_{\veps,M}\cdot\psi\right|\,\leq\,
C\,\left\|\Delta r_\veps\right\|_{L^2_T(L^2)}\,\left\|\bigl(\Id-S_M\bigr)\bigl(\nabla r_{\veps,M}\cdot\psi\bigr)\right\|_{L^2_T(L^2)}\,,
$$
and this quantity converges to $0$ for $M\longrightarrow+\infty$, thanks to Lemma \ref{l:density}, point $(iii)$. Indeed, it is enough
to notice that $\left(\nabla\bigl(\nabla r_{\veps,M}\cdot\psi\bigr)\right)_{\veps,M}$ is bounded
in $L^2_T(L^2)$, uniformly both in $\veps$ and $M$.
\end{proof}

Thanks to Lemma \ref{l:capill_approx}, for any $\psi\,\in\,\mc{D}\bigl([0,T[\,\times\Omega;\R^3\bigr)\,\cap\,{\rm Ker}\,\wtilde{\mc{A}}$
we have to consider the convergence of the term (pay attention to the signs)
\begin{eqnarray*}
\hspace{-0.5cm} \int^T_0\!\!\int_\Omega\Delta r_{\veps,M}\;\nabla r_{\veps,M}\cdot\psi\,dx\,dt & = &
-\,\int^T_0\!\!\int_\Omega\bigl(\Id-\Delta\bigr)r_{\veps,M}\;\nabla r_{\veps,M}\cdot\psi\,dx\,dt\,+ \\
& & \qquad\qquad\qquad\qquad +\,\int^T_0\!\!\int_\Omega r_{\veps,M}\;\nabla r_{\veps,M}\cdot\psi\,dx\,dt\,.
\end{eqnarray*}
Notice that $r_{\veps,M}\,\nabla r_{\veps,M}\,=\,\nabla\left(r_{\veps,M}\right)^2/2$: therefore, since $\psi$ is divergence-free,
by integration by parts we get that the latter item on the right-hand side is identically $0$.

Therefore, in the end we have to deal only with the remainder
\begin{eqnarray}
-\,\int^T_0\!\!\int_\Omega\bigl(\Id-\Delta\bigr)r_{\veps,M}\;\nabla r_{\veps,M}\cdot\psi & = & 
-\,\int^T_0\!\!\int_\Omega\bigl(\Id-\Delta_h\bigr)\lan r_{\veps,M}\ran\;\nabla_h\lan r_{\veps,M}\ran\cdot\psi\,- \label{comp-cpt:dens} \\
& & \qquad\qquad
-\,\int^T_0\!\!\int_\Omega\lan\bigl(\Id-\Delta\bigr)\wtilde{r}_{\veps,M}\;\nabla\wtilde{r}_{\veps,M}\ran\cdot\psi\,, \nonumber
\end{eqnarray}
where we used the notations introduced in \eqref{dec:vert-av}, setting $\wtilde{r}_{\veps,M}$ the mean-free part of $r_{\veps,M}$.

\subsubsection{The convective term} \label{sss:convect}

In the present paragraph we will deal with the convective term. Once again, the first step is to reduce
the study to the case of smooth vector fields $V_{\veps,M}$.

\begin{lemma} \label{l:conv_approx}
For any $\psi\,\in\,\mc{D}\bigl([0,T[\,\times\Omega;\R^3\bigr)$, we have
$$
\lim_{M\ra+\infty}\,\limsup_{\veps\ra0}\,\left|\int^T_0\!\!\int_\Omega\rho_\veps u_\veps\otimes u_\veps:\nabla\psi\,dx\,dt\,-\,
\int^T_0\!\!\int_\Omega V_{\veps,M}\otimes V_{\veps,M}:\nabla\psi\,dx\,dt\right|\,=\,0\,.
$$
\end{lemma}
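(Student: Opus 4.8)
The plan is to reduce the convective term $\int_0^T\!\!\int_\Omega \rho_\veps u_\veps\otimes u_\veps:\nabla\psi$ to a bilinear expression in the regularized momenta $V_{\veps,M}=S_M(\rho_\veps u_\veps)$, using the uniform bounds collected in Subsection \ref{ss:uniform} together with the convergence properties \eqref{reg:convergence}. The basic identity to exploit is $\rho_\veps u_\veps\otimes u_\veps = (\rho_\veps u_\veps)\otimes u_\veps$, so that one factor is $V_\veps$ and the other is $u_\veps$ itself. First I would write $\rho_\veps u_\veps\otimes u_\veps = V_\veps\otimes u_\veps$ and insert the splitting $V_\veps = V_{\veps,M}+(\Id-S_M)V_\veps$ and (after rewriting the second slot as well) $u_\veps = \rho_\veps^{-1}V_\veps$ is awkward because of vacuum, so instead it is cleaner to symmetrize: note $V_\veps\otimes u_\veps = \sqrt{\rho_\veps}u_\veps\otimes \sqrt{\rho_\veps}u_\veps + (\rho_\veps-1)\,\text{(lower order)}$ is also delicate. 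The robust route is to keep $V_\veps\otimes u_\veps$ and bound the difference $V_\veps\otimes u_\veps - V_{\veps,M}\otimes V_{\veps,M}$ by a telescoping of three terms: $(V_\veps-V_{\veps,M})\otimes u_\veps$, $V_{\veps,M}\otimes(u_\veps-V_\veps)$, and $V_{\veps,M}\otimes(V_\veps - V_{\veps,M})$.

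For each of these three pieces I would test against $\nabla\psi\in\mc{D}$, whose support is a fixed compact $K\subset\Omega$, and estimate in dual Sobolev/Lebesgue norms on $K$. The term with $u_\veps-V_\veps = (1-\rho_\veps)u_\veps = -\veps\, r_\veps u_\veps$: by Remark \ref{r:rho_L^2} and the uniform $L^2_T(L^{3/2}_{loc})$ bound on $u_\veps$ (see \eqref{unif-bound:u}) together with the $L^p_T(L^\infty)$ bound on $\veps^{-1}(\rho_\veps-1)$ from Proposition \ref{p:sing_b-BD}, this difference is $O(\veps)$ in a suitable $L^1_T(L^1_{loc})$-type norm, hence contributes nothing in the limit $\veps\to0$, uniformly in $M$. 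For the two genuinely $M$-dependent terms, the point is that $(V_\veps-V_{\veps,M})$ is small in $L^2_T(H^{-s}(K))$ for every $s>0$ as $M\to+\infty$ uniformly in $\veps$ (second line of \eqref{reg:convergence}), while the companion factor ($u_\veps$, or $V_{\veps,M}$) must be controlled in a space whose regularity compensates the negative index: here I would use the uniform bound on $\rho_\veps^{3/2}u_\veps$ in $L^2_T(L^3)$ and on $D(\rho_\veps^{3/2}u_\veps)$ in $L^2_T(L^2+L^{3/2})$ established in Subsection \ref{ss:uniform}, which by Sobolev embedding gives a uniform $L^2_T(H^\sigma_{loc})$ bound for a small $\sigma>0$ on a quantity close to $V_\veps$; then pairing $H^{-s}$ against $H^{\sigma}$ with $s<\sigma$ kills the term. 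One must be slightly careful that $V_{\veps,M}$ itself is only uniformly bounded in $L^2_T(L^2(K)\cap L^3(K))$ (Proposition \ref{p:decomp}), which is enough to pair against the small factor once $s>0$ is chosen and a fixed amount of regularity is borrowed via the spectral truncation $S_M$ — but that borrowing is exactly what produces the $M$-dependence and why we only get $\limsup_{\veps\to0}$ followed by $\lim_{M\to+\infty}$.

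Concretely the structure of the argument mirrors Lemma \ref{l:capill_approx}: fix $M$, split off the high-frequency part $(\Id-S_M)$ acting on one factor, move $S_M$ (self-adjoint) when convenient, estimate the high-frequency remainder by (uniform bound)$\times$(small quantity depending only on $M$), and finally let $\veps\to0$ and then $M\to+\infty$. I would organize it as: (1) replace $u_\veps$ by $V_\veps$ up to an $O(\veps)$ error using $\rho_\veps = 1+\veps r_\veps$ and the bounds of Propositions \ref{p:sing_b-E}–\ref{p:sing_b-BD}; (2) replace the first copy of $V_\veps$ by $V_{\veps,M}$ using $\|(\Id-S_M)V_\veps\|_{L^2_T(H^{-s}(K))}\to0$ paired against the uniform $L^2_T(H^{s}_{loc})$-type control coming from \eqref{unif-bound:rho^32-u}–\eqref{unif-bound:D(rho-u)} and Sobolev embedding; (3) replace the second copy likewise, now pairing against $V_{\veps,M}$ which is uniformly in $L^2_T(L^2(K)\cap L^3(K))$ and has, for fixed $M$, any amount of spatial smoothness.

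The main obstacle is the low integrability of the velocity field: unlike in \cite{G-SR_2006} and \cite{F-G-GV-N}, here $u_\veps$ is only controlled in $L^2_T(L^{3/2}_{loc})$ and $\rho_\veps u_\veps$ only in $L^\infty_T(L^{3/2}_{loc})\cap L^2_T(L^2)$, so one cannot pair an $H^{-s}$-small factor against a merely $L^2_t L^2_x$ factor and must instead route everything through $\rho_\veps^{3/2}u_\veps\in L^2_T(L^3)$ with a gradient in $L^2_T(L^2+L^{3/2})$ — i.e. through the extra $\rho^{3/2}$-weighted bounds that were established precisely for this purpose. Getting the bookkeeping of which weighted momentum ($\rho_\veps u_\veps$, $\rho_\veps^{3/2}u_\veps$, or $\sqrt{\rho_\veps}u_\veps$) carries enough regularity to absorb each $(\Id-S_M)$-remainder, while keeping all the error terms either $O(\veps)$ or $o_M(1)$ uniformly, is the delicate part; everything else is a routine double limit in the spirit of Lemma \ref{l:capill_approx}.
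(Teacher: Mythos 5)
Your telescoping
$$
V_\veps\otimes u_\veps\,-\,V_{\veps,M}\otimes V_{\veps,M}\,=\,(V_\veps-V_{\veps,M})\otimes u_\veps\,+\,V_{\veps,M}\otimes(u_\veps-V_\veps)\,+\,V_{\veps,M}\otimes(V_\veps-V_{\veps,M})
$$
algebraically works, but it places a \emph{bare} $u_\veps$ next to the small factors in two of the three pieces, and that is precisely where the available bounds are too weak. Take the middle term: $u_\veps-V_\veps\,=\,-\veps\,r_\veps\,u_\veps$, so you need to integrate $V_{\veps,M}\,r_\veps\,u_\veps:\nabla\psi$ in time. The bounds at hand are $V_{\veps,M}\in L^2_T$, $r_\veps\in L^p_T(L^\infty)$ for any $p<4$ (Proposition~\ref{p:sing_b-BD}), and $u_\veps\in L^2_T(L^{3/2}_{loc})$; the time exponents give $\tfrac12+\tfrac1p+\tfrac12=1+\tfrac1p>1$, so H\"older does not close and the term is \emph{not} $O(\veps)$ in $L^1_T(L^1_{loc})$ as you claim. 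The first term has the analogous defect: to kill $(\Id-S_M)V_\veps\otimes u_\veps$ as $M\to\infty$ (moving $\Id-S_M$ to the other factor and invoking Lemma~\ref{l:density}(iii)), you must control $\nabla\bigl(u_\veps\,\nabla\psi\bigr)$ in $L^2_T$ of some Lebesgue space; but $Du_\veps$ is only in $L^1_T(L^{3/2}_{loc})$, not $L^2_T$. You correctly observe that $\rho_\veps^{3/2}u_\veps$ (which lies in $L^2_T(L^3)$ with gradient in $L^2_T(L^2+L^{3/2})$, see \eqref{sing-b:D(rho-u)}) carries the regularity needed, but the decomposition you propose never actually routes the companion factor through it --- it stays a bare $u_\veps$.

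The paper avoids both pitfalls by a different bookkeeping of $\rho_\veps$-weights rather than a symmetric telescoping. The starting identity is $\rho_\veps u_\veps\otimes u_\veps\,=\,\sqrt{\rho_\veps}\,u_\veps\otimes\sqrt{\rho_\veps}\,u_\veps$, and the substitutions $\sqrt{\rho_\veps}\,u_\veps\rightsquigarrow V_\veps\rightsquigarrow V_{\veps,M}$ are performed \emph{one slot at a time}, always pulling a power of $\rho_\veps$ across so that each remainder involves $(\sqrt{\rho_\veps}-1)$ next to $\sqrt{\rho_\veps}u_\veps\in L^\infty_T(L^2)$ (not $u_\veps$); and when the high-frequency piece $(\Id-S_M)V_\veps$ appears, its companion is rewritten as $\rho_\veps^{3/2}u_\veps$ (up to a further $O(\veps)$ correction weighted by $\sqrt{\rho_\veps}u_\veps$), so that both the $L^2_T$ time integrability and the $L^2_T$ gradient bound are available to invoke Lemma~\ref{l:density}(iii). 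This careful re-weighting, which you flag as "the delicate part" but do not execute, is the substance of the proof; without it the double limit does not close.
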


\begin{proof}
First of all, we can write
$$
\int^T_0\!\!\int_\Omega\rho_\veps u_\veps\otimes u_\veps:\nabla\psi\,=\,
\int^T_0\!\!\int_\Omega\rho_\veps u_\veps\otimes\sqrt{\rho_\veps}\,u_\veps:\nabla\psi\,-\,
\veps\int^T_0\!\!\int_\Omega\frac{\sqrt{\rho_\veps}-1}{\veps}\,\rho_\veps u_\veps\otimes u_\veps:\nabla\psi
$$
Notice that the latter integral in the right-hand side is of order $\veps$: this is a consequence of the uniform bounds
$\bigl(\sqrt{\rho_\veps}\,u_\veps\bigr)_\veps\,\subset\,L^\infty_T\bigl(L^2\bigr)$ and
$\bigl(\veps^{-1}(\sqrt{\rho_\veps}-1)\bigr)_\veps\,\subset\,L^p_T\bigl(L^\infty\bigr)$ for any $p\in[2,4[\,$ (recall Proposition
\ref{p:sing_b-BD}). The former one, instead, can be decomposed again into
\begin{eqnarray*}
& & \hspace{-0.5cm}
\int^T_0\!\!\int_\Omega\rho_\veps u_\veps\otimes\sqrt{\rho_\veps}\,u_\veps:\nabla\psi\;=\;
\int^T_0\!\!\int_\Omega V_{\veps,M}\otimes\sqrt{\rho_\veps}\,u_\veps:\nabla\psi\,+\,
\int^T_0\!\!\int_\Omega\bigl(\Id-S_M\bigr)V_\veps\otimes\sqrt{\rho_\veps}\,u_\veps:\nabla\psi \\
& & \qquad\qquad\qquad =\;\int^T_0\!\!\int_\Omega V_{\veps,M}\otimes V_\veps:\nabla\psi\,+\,O(\veps)\,+\,
\int^T_0\!\!\int_\Omega\bigl(\Id-S_M\bigr)V_\veps\otimes\sqrt{\rho_\veps}\,u_\veps:\nabla\psi\,,
\end{eqnarray*}
where, in passing from the first to the second equality, we have argued exactly as before.

Let us focus on the high frequency term first: we can write
\begin{eqnarray*}
\hspace{-0.5cm} \int^T_0\!\!\int_\Omega\bigl(\Id-S_M\bigr)V_\veps\otimes\sqrt{\rho_\veps}\,u_\veps:\nabla\psi & = & 
-\,\veps\int^T_0\!\!\int_\Omega\frac{\rho_\veps-1}{\veps}\,\bigl(\Id-S_M\bigr)V_\veps\otimes\sqrt{\rho_\veps}\,u_\veps:\nabla\psi\,+ \\
& & \qquad\qquad\qquad +\,\int^T_0\!\!\int_\Omega\bigl(\Id-S_M\bigr)V_\veps\otimes\rho_\veps^{3/2}\,u_\veps:\nabla\psi\,.
\end{eqnarray*}
Notice that the former term is $O(\veps)$ (by uniform bounds again). On the other hand, \eqref{sing-b:D(rho-u)} tells us that
$\bigl(\rho_\veps^{3/2}u_\veps\,\nabla\psi\bigr)_\veps$ is uniformly bounded in $L^2_T\bigl(W^{1,3/2}_{loc}(\Omega)\bigr)$: then, since
$\bigl(V_\veps\bigr)_\veps\,\subset\,L^2_T\bigl(L^2\bigr)$ and the operator $\Id-S_M$ is self-adjoint, by Lemma \ref{l:density}
we gather that the latter one is arbitrarly small for $M$ large enough, uniformly in $\veps$. In the end, we obtain that
$$
\lim_{M\ra+\infty}\,\limsup_{\veps\ra0}\,
\left|\int^T_0\!\!\int_\Omega\bigl(\Id-S_M\bigr)V_\veps\otimes\sqrt{\rho_\veps}\,u_\veps:\nabla\psi\right|\,=\,0\,.
$$

It remains us to consider the integral
$$
\int^T_0\!\!\int_\Omega V_{\veps,M}\otimes V_\veps:\nabla\psi\,=\,
\int^T_0\!\!\int_\Omega V_{\veps,M}\otimes V_{\veps,M}:\nabla\psi\,+\,
\int^T_0\!\!\int_\Omega V_{\veps,M}\otimes\bigl(\Id-S_M\bigr)V_\veps:\nabla\psi
$$
and prove that the last term in the right-hand side of the previous relation is small. We notice that, by the decomposition of
Proposition \ref{p:decomp}, we have
$$
\left\|D\bigl(V_{\veps,M}\,:\,\nabla\psi\bigr)\right\|_{L^2_T(L^2_{loc})}\,\leq\,C_1\,+\,\veps\,C_2(M)\,,
$$
for some constant $C_1>0$ independent of $\veps$ and $M$, and $C_2(M)$ just depending on $M$. Therefore, using again the simmetry
of the operator $\Id-S_M$ and Lemma \ref{l:density}, it immediately follows that
$$
\lim_{M\ra+\infty}\,\limsup_{\veps\ra0}\,
\left|\int^T_0\!\!\int_\Omega V_{\veps,M}\otimes\bigl(\Id-S_M\bigr)V_\veps:\nabla\psi\right|\,=\,0\,.
$$
This concludes the proof of the lemma.
\end{proof}

Now, recall  equation \eqref{eq:sing_weak}: paying attention once again to the right signs, by the previous lemma we have just to pass to
the limit in the term
\begin{eqnarray*}
& & \hspace{-1cm}
-\,\int^T_0\!\!\int_\Omega V_{\veps,M}\otimes V_{\veps,M}:\nabla\psi\,=\, 
\int^T_0\!\!\int_\Omega \div\left(V_{\veps,M}\otimes V_{\veps,M}\right)\,\cdot\,\psi \\
& & \qquad\qquad\quad =\,\int^T_0\!\!\int_\Omega \div_h\left(\lan V^h_{\veps,M}\ran\otimes\lan V^h_{\veps,M}\ran\right)\,\cdot\,\psi\,+\,
\int^T_0\!\!\int_\Omega \div_h\left(\lan \wtilde{V}^h_{\veps,M}\otimes\wtilde{V}^h_{\veps,M}\ran\right)\,\cdot\,\psi \\
&  & \qquad\qquad\quad =\,\int^T_0\!\!\int_\Omega\left(\mc{T}^1_{\veps,M}\,+\,\mc{T}^2_{\veps,M}\right)\,\cdot\,\psi\,.
\end{eqnarray*}

For notational convenience, from now on we will generically denote by $\mc{R}_{\veps,M}$ any remainder, i.e. any term satisfying the property
\begin{equation} \label{eq:remainder}
\lim_{M\ra+\infty}\,\limsup_{\veps\ra0}\,\left|\int^T_0\int_\Omega \mc{R}_{\veps,M}\,\cdot\,\psi\,dx\,dt\right|\,=\,0
\end{equation}
for all test functions $\psi\,\in\,\mc{D}\bigl([0,T[\,\times\Omega;\R^3\bigr)\,\cap\,{\rm  Ker}\,\wtilde{\mc{A}}$.

\paragraph{Handling $\mc{T}^1_{\veps,M}$.}

Since we are dealing with  smooth functions, we can integrate by parts: we get
\begin{eqnarray*}
\mc{T}^1_{\veps,M} & = & \div_{\!h}\!\left(\lan V^h_{\veps,M}\ran\otimes\lan V^h_{\veps,M}\ran\right)\;=\;
\div_{\!h}\!\bigl(\lan V^h_{\veps,M}\ran\bigr)\;\lan V^h_{\veps,M}\ran\,+\,
\lan V^h_{\veps,M}\ran\cdot\nabla_h\left(\lan V^h_{\veps,M}\ran\right) \\
& = & \div_{\!h}\bigl(\lan V^h_{\veps,M}\ran\bigr)\;\lan V^h_{\veps,M}\ran\,+\,
\dfrac{1}{2}\,\nabla_h\left(\left|\lan V^h_{\veps,M}\ran\right|^2\right)\,+\,
{\rm curl}_h\lan V^h_{\veps,M}\ran\;\lan V^h_{\veps,M}\ran^\perp\,.
\end{eqnarray*}
Notice that we can forget about the second term: it is a perfect gradient, and then it vanishes when tested against a function in the kernel
of the singular perturbation operator.

For the first term, we take advantage of system \eqref{reg:approx-w}: averaging the first equation with respect to $x^3$
and multiplying it by $\lan V^h_{\veps,M}\ran$, we arrive at
$$
\div_h\bigl(\lan V^h_{\veps,M}\ran\bigr)\;\lan V^h_{\veps,M}\ran\;=\;-\,\veps\,\d_t\lan r_{\veps,M}\ran\,\lan V^h_{\veps,M}\ran\;=\;
\mc{R}_{\veps,M}\,+\,\veps\,\lan r_{\veps,M}\ran\,\d_t\lan V^h_{\veps,M}\ran\,,
$$
since $\veps\,\d_t\bigl(\lan r_{\veps,M}\ran \,\lan V^h_{\veps,M}\ran\bigr)$ is a remainder in the sense specified by relation
\eqref{eq:remainder} above. We use now the horizontal part of \eqref{reg:approx-w} (again, after taking the vertical average),
multiplied by $\lan r_{\veps,M}\ran$: paying attention to  the signs, we get
\begin{eqnarray*}
\veps\,\lan r_{\veps,M}\ran\,\d_t\lan V^h_{\veps,M}\ran & = & -\,\mf{c}(x^h)\,\lan r_{\veps,M}\ran\,\lan V^h_{\veps,M}\ran^\perp\,-\,
\lan r_{\veps_M}\ran\,\nabla_h\bigl(\Id-\Delta_h\bigr)\lan r_{\veps_M}\ran\,+\,\veps\,\lan f^h_{\veps,M}\ran\,+\,\lan g^h_{\veps,M}\ran \\
& = & -\,\mf{c}(x^h)\,\lan r_{\veps,M}\ran\,\lan V^h_{\veps,M}\ran^\perp\,+\,
\bigl(\Id-\Delta_h\bigr)\lan r_{\veps_M}\ran\,\nabla_h\lan r_{\veps_M}\ran\,+\,\mc{R}_{\veps,M}\,,
\end{eqnarray*}
where we used also the properties proved in Proposition \ref{p:regular} and we included in the remainder term also the perfect gradient.
Inserting this relation into the expression for $\mc{T}^1_{\veps,M}$, we find
\begin{equation} \label{eq:T^1_a}
\mc{T}^1_{\veps,M}\,=\,\biggl({\rm curl}_h\lan V^h_{\veps,M}\ran\,-\,\mf{c}(x^h)\,\lan r_{\veps,M}\ran\biggr)\,\lan V^h_{\veps,M}\ran^\perp
\,+\,\bigl(\Id-\Delta_h\bigr)\lan r_{\veps_M}\ran\,\nabla_h\lan r_{\veps_M}\ran\,+\,\mc{R}_{\veps,M}\,.
\end{equation}
In order to deal with the first term in the right-hand side, the idea is to decompose $V^h_{\veps,M}$ in the orthonormal basis
(up to normalization) $\left\{\nabla_h\mf{c}\,,\,\nabla^\perp_h\mf{c}\right\}$. Of course, this can be done in the region when
$\nabla_h\mf{c}$ is far from $0$: therefore, we proceed as follows.

First of all, let us come back to system \eqref{reg:approx-w} for a while: we take again the vertical average of the equations and we
compute the ${\rm curl}$ of the horizontal part, finding
$$
\veps\,\d_t{\rm curl}_h\lan V^h_{\veps,M}\ran\,+\,\div_{\!h}\left(\mf{c}(x^h)\,\lan V^h_{\veps,M}\ran\right)\,=\,
\veps\,{\rm curl}_h\lan f^h_{\veps,M}\ran\,+\,{\rm curl}_h\lan g^h_{\veps,M}\ran\,.
$$
On the other hand, from the first equation multiplied by $\mf{c}$, we get
$$
\veps\,\d_t\left(\mf{c}(x^h)\,\lan r_{\veps,M}\ran\right)\,+\,\div_{\!h}\left(\mf{c}(x^h)\,\lan V^h_{\veps,M}\ran\right)\,=\,
\lan V^h_{\veps,M}\ran\,\cdot\,\nabla_h\mf{c}(x^h)\,,
$$
and this relation, together with the previous one, finally gives us
\begin{equation} \label{eq:curl-cr}
\veps\,\d_t\left({\rm curl}_h\lan V^h_{\veps,M}\ran\,-\,\mf{c}(x^h)\,\lan r_{\veps,M}\ran\right)\,=\,
\veps\,{\rm curl}_h\lan f^h_{\veps,M}\ran\,+\,{\rm curl}_h\lan g^h_{\veps,M}\ran\,+\,\lan V^h_{\veps,M}\ran\,\cdot\,\nabla_h\mf{c}(x^h)\,.
\end{equation}
Notice that, thanks to Proposition \ref{p:regular}, there exists a function $\eta\geq0$, with $\eta(M)\longrightarrow0$ for $M\ra+\infty$,
such that, for any compact $K\,\subset\,\Omega$,
\begin{equation} \label{est:curl_rem}
\sup_{\veps>0}\left\|{\rm curl}_h\lan g^h_{\veps,M}\ran\right\|_{L^2_T\bigl(L^2(K)\bigr)}
\,\leq\,\eta(M)\,.
\end{equation}
Then, fixed a $b\in\mc{C}^{\infty}_0(\R^2)$, with $0\leq b(x^h)\leq1$, such that $b\equiv1$ on $\left\{|x^h|\leq1\right\}$ and
$b\equiv0$ on $\left\{|x^h|\geq2\right\}$, we define
$$
b_M(x^h)\,:=\,b\left(\bigl(\eta(M)\bigr)^{\!-1/2}\,\nabla_h\mf{c}(x^h)\right)\,.
$$

Now we are ready to deal with the first term in the right-hand side of \eqref{eq:T^1_a}. For notational convenience, we set
$\mc{X}_{\veps,M}\,:=\,{\rm curl}_h\lan V^h_{\veps,M}\ran\,-\,\mf{c}(x^h)\,\lan r_{\veps,M}\ran$.
On the one hand, using the decomposition
and the bounds established in Proposition \ref{p:decomp}, we deduce that, for any compact $K\subset\Omega$,
\begin{eqnarray*}
\left\|b_M\,\mc{X}_{\veps,M}\,\lan V^h_{\veps,M}\ran^\perp\right\|_{L^1([0,T]\times K)} & \leq &
\veps\,C(M)\,+\,C\,\left\|b_M\right\|_{L^6(K)} \\
& \leq & 
\veps\,C(M)\,+\,C\,\left(\mc{L}\left\{x^h\in\R^2\;\bigl|\;\left|\nabla_h\mf{c}(x^h)\right|\,\leq\,2\,\sqrt{\eta(M)}\right\}\right)^{\!1/6}\,.
\end{eqnarray*}
Therefore, thanks to hypothesis \eqref{eq:non-crit}, we infer that this term is a remainder, in the sense specified by
relation \eqref{eq:remainder}.
On the other hand, for $\nabla_h\mf{c}$ far from $0$, we can write
\begin{eqnarray*}
\left(1-b_M\right)\,\mc{X}_{\veps,M}\,\lan V^h_{\veps,M}\ran^\perp & = & \left(1-b_M\right)\,\mc{X}_{\veps,M}
\left(\frac{\lan V^h_{\veps,M}\ran^\perp\cdot\nabla_h^\perp\mf{c}}{\left|\nabla_h\mf{c}\right|^2}\,\nabla_h^\perp\mf{c}\,+\,
\frac{\lan V^h_{\veps,M}\ran^\perp\cdot\nabla_h\mf{c}}{\left|\nabla_h\mf{c}\right|^2}\,\nabla_h\mf{c}\right) \\
& = & \left(1-b_M\right)\,\mc{X}_{\veps,M}
\left(\frac{\lan V^h_{\veps,M}\ran\cdot\nabla_h\mf{c}}{\left|\nabla_h\mf{c}\right|^2}\,\nabla_h^\perp\mf{c}\,+\,
\frac{\lan V^h_{\veps,M}\ran^\perp\cdot\nabla_h\mf{c}}{\left|\nabla_h\mf{c}\right|^2}\,\nabla_h\mf{c}\right)\,.
\end{eqnarray*}
We observe that the latter term in the right-hand side is identically $0$ when tested against a $\psi\in{\rm Ker}\,\wtilde{A}$
(because $\div_{\!h}\psi^h\,=\,\div_{\!h}\left(\mf{c}\psi^h\right)\equiv0$, and then $\psi^h\cdot\nabla_h\mf{c}=0$).
For the former term, instead, we use the expression found in \eqref{eq:curl-cr}: we get
\begin{eqnarray*}
& & \hspace{-0.7cm} \left(1-b_M\right)\,\mc{X}_{\veps,M}\frac{\lan V^h_{\veps,M}\ran\cdot\nabla_h\mf{c}}{\left|\nabla_h\mf{c}\right|^2}\,
\nabla_h^\perp\mf{c}\;= \\
& & \qquad =\;\frac{\left(1-b_M\right)\,\mc{X}_{\veps,M}}{\left|\nabla_h\mf{c}\right|^2}\,\veps\,\d_t\mc{X}_{\veps,M}\,
\nabla_h^\perp\mf{c}\,-\,\frac{\left(1-b_M\right)\,\mc{X}_{\veps,M}}{\left|\nabla_h\mf{c}\right|^2}
\left(\veps\,{\rm curl}_h\lan f^h_{\veps,M}\ran\,+\,{\rm curl}_h\lan g^h_{\veps,M}\ran\right)\,\nabla_h^\perp\mf{c} \\
& & \qquad =\;\frac{\veps\,\left(1-b_M\right)\,\d_t\left|\mc{X}_{\veps,M}\right|^2}{2\,\left|\nabla_h\mf{c}\right|^2}\,\nabla_h^\perp\mf{c}
\,-\,\frac{\left(1-b_M\right)\,\mc{X}_{\veps,M}}{\left|\nabla_h\mf{c}\right|^2}
\left(\veps\,{\rm curl}_h\lan f^h_{\veps,M}\ran\,+\,{\rm curl}_h\lan g^h_{\veps,M}\ran\right)\,\nabla_h^\perp\mf{c}\,,
\end{eqnarray*}
which is again a remainder $\mc{R}_{\veps,M}$, thanks to Proposition \ref{p:decomp} and property \eqref{est:curl_rem}.


In the end, putting all these facts together, we have proved that (paying attention again to the right signs)
\begin{equation} \label{eq:T^1_final}
\mc{T}^1_{\veps,M}\,=\,\bigl(\Id-\Delta_h\bigr)\lan r_{\veps_M}\ran\,\nabla_h\lan r_{\veps_M}\ran\,+\,\mc{R}_{\veps,M}\,.
\end{equation}
Notice that the first term in the right-hand side exactly cancels out with the first one of \eqref{comp-cpt:dens}.

\paragraph{Dealing with $\mc{T}^2_{\veps,M}$.} 
Let us now consider the term $\mc{T}^2_{\veps,M}$: exactly as done above, we can write
$$
\mc{T}^2_{\veps,M}\,=\,\div_{\!h}\left(\lan \wtilde{V}^h_{\veps,M}\otimes \wtilde{V}^h_{\veps,M}\ran\right)
\,=\,\lan \div_{\!h}\bigl(\wtilde{V}^h_{\veps,M}\bigr)\;\wtilde{V}^h_{\veps,M}\ran\,+\,
\dfrac{1}{2}\,\lan\nabla_h\left|\wtilde{V}^h_{\veps,M}\right|^2\ran\,+\,
\lan {\rm curl}_h\wtilde{V}^h_{\veps,M}\;\left(\wtilde{V}^h_{\veps,M}\right)^\perp\ran\,.
$$

Let us focus on the last term for a while: with the notations introduced in \eqref{dec:vert-av}, we have
$$
\left({\rm curl}\wtilde{V}_{\veps,M}\right)^h\,=\,\d_3\wtilde{W}^h_{\veps,M}\qquad\qquad\mbox{ and }\qquad\qquad
\left({\rm curl}\wtilde{V}_{\veps,M}\right)^3\,=\,{\rm curl}_h\wtilde{V}^h_{\veps,M}\,=\,\wtilde{\omega}^3_{\veps,M}\,,
$$
where we have defined $\wtilde{W}^h_{\veps,M}\,:=\,\left(\wtilde{V}^h_{\veps,M}\right)^\perp\,-\,
\d_3^{-1}\nabla^\perp_h\wtilde{V}^3_{\veps,M}$.
For these quantities, from the momentum equation in \eqref{reg:approx-w} (where we take the mean-free part and the ${\rm curl}$),
we deduce
\begin{equation} \label{eq:mean-free}
\begin{cases}
\veps\,\d_t\wtilde{W}^h_{\veps,M}\,-\,\mf{c}\,\wtilde{V}^h_{\veps,M}\,=\,\left(\d_3^{-1}\,
{\rm curl}\left(\veps\,\wtilde{f}_{\veps,M}\,+\,\wtilde{g}_{\veps,M}\right)\right)^h \\[1ex]
\veps\,\d_t\wtilde{\omega}^3_{\veps,M}\,+\,\div_{\!h}\bigl(\mf{c}\,\wtilde{V}^h_{\veps,M}\bigr)\,=\,{\rm curl}_h\left(\veps\,\wtilde{f}^h_{\veps,M}\,+\,
\wtilde{g}^h_{\veps,M}\right)
\end{cases}
\end{equation}
Making use of the relations above and of Propositions \ref{p:regular} and \ref{p:decomp}, we get
\begin{eqnarray*}
{\rm curl}_h\wtilde{V}^h_{\veps,M}\;\left(\wtilde{V}^h_{\veps,M}\right)^\perp & = &
\wtilde{\omega}^3_{\veps,M}\,\left(\wtilde{V}^h_{\veps,M}\right)^\perp \\
& = & \frac{\veps}{\mf c}\,\d_t\left(\wtilde{W}^h_{\veps,M}\right)^\perp\,\wtilde{\omega}^3_{\veps,M}\,-\,\frac{1}{\mf c}\,\wtilde{\omega}^3_{\veps,M}\,
\left(\left(\d_3^{-1}\,{\rm curl}\left(\veps\,\wtilde{f}_{\veps,M}\,+\,\wtilde{g}_{\veps,M}\right)\right)^h\right)^\perp \\
& = & -\,\frac{\veps}{\mf c}\,\left(\wtilde{W}^h_{\veps,M}\right)^\perp\,\d_t\wtilde{\omega}^3_{\veps,M}\,+\,\mc{R}_{\veps,M}\;=\;
\frac{1}{\mf c}\,\left(\wtilde{W}^h_{\veps,M}\right)^\perp\,\div_{\!h}\bigl(\mf{c}\,\wtilde{V}^h_{\veps,M}\bigr)\,+\,\mc{R}_{\veps,M}\,.
\end{eqnarray*}

Hence, including also the gradient term into the remainders, we arrive at the equality
$$ 
\mc{T}^2_{\veps,M}\,=\,\lan \div_{\!h}\bigl(\wtilde{V}^h_{\veps,M}\bigr)\,
\left(\wtilde{V}^h_{\veps,M}\,+\,\left(\wtilde{W}^h_{\veps,M}\right)^\perp\right)\ran\,+\,
\lan \frac{1}{\mf c}\,\left(\wtilde{W}^h_{\veps,M}\right)^\perp\,\wtilde{V}^h_{\veps,M}\cdot\nabla_h\mf{c} \ran\,+\,\mc{R}_{\veps,M}\,,
$$
which can be finally rewritten in the following way:
\begin{eqnarray*}
\mc{T}^2_{\veps,M} & = & \lan \div\wtilde{V}_{\veps,M}\,\left(\wtilde{V}^h_{\veps,M}\,+\,\left(\wtilde{W}^h_{\veps,M}\right)^\perp\right)\ran\,- \\
& & \qquad\qquad -\,
\lan \d_3\wtilde{V}^3_{\veps,M}\,\left(\wtilde{V}^h_{\veps,M}\,+\,\left(\wtilde{W}^h_{\veps,M}\right)^\perp\right)\ran\,+\,
\lan \frac{1}{\mf c}\,\left(\wtilde{W}^h_{\veps,M}\right)^\perp\,\wtilde{V}^h_{\veps,M}\cdot\nabla_h\mf{c} \ran\,+\,\mc{R}_{\veps,M}\,.
\end{eqnarray*}
The second term on the right-hand side is actually another remainder. As a matter of fact, direct computations yield
\begin{eqnarray*}
\d_3\wtilde{V}^3_{\veps,M}\left(\wtilde{V}^h_{\veps,M}+\left(\wtilde{W}^h_{\veps,M}\right)^\perp\right) & = & 
\d_3\!\!\left(\wtilde{V}^3_{\veps,M}\left(\wtilde{V}^h_{\veps,M}+\left(\wtilde{W}^h_{\veps,M}\right)^\perp\right)\right)-
\wtilde{V}^3_{\veps,M}\,\d_3\!\left(\wtilde{V}^h_{\veps,M}+\left(\wtilde{W}^h_{\veps,M}\right)^\perp\right) \\
& = & \mc{R}_{\veps,M}\,-\,\frac{1}{2}\,\nabla_h\left|\wtilde{V}^3_{\veps,M}\right|^2\;=\;\mc{R}_{\veps,M}\,.
\end{eqnarray*}
As for the first term, instead, we use the equation for the density in \eqref{reg:approx-w} to obtain
\begin{eqnarray*}
\div\wtilde{V}_{\veps,M}\left(\wtilde{V}^h_{\veps,M}+\left(\wtilde{W}^h_{\veps,M}\right)^\perp\right) & = & 
-\,\veps\,\d_t\wtilde{r}_{\veps,M}\left(\wtilde{V}^h_{\veps,M}+\left(\wtilde{W}^h_{\veps,M}\right)^\perp\right) \\
& = & \mc{R}_{\veps,M}\,+\,\veps\,\wtilde{r}_{\veps,M}\;\d_t\!\left(\wtilde{V}^h_{\veps,M}+\left(\wtilde{W}^h_{\veps,M}\right)^\perp\right)\,.
\end{eqnarray*}
Now, by equations \eqref{eq:mean-free} and \eqref{reg:approx-w} again, it is easy to see that
$$
\veps\,\wtilde{r}_{\veps,M}\;\d_t\!\left(\wtilde{V}^h_{\veps,M}+\left(\wtilde{W}^h_{\veps,M}\right)^\perp\right)\,=\,
\mc{R}_{\veps,M}\,-\,\wtilde{r}_{\veps,M}\,\nabla\bigl(\Id-\Delta\bigr)\wtilde{r}_{\veps,M}\,=\,
\mc{R}_{\veps,M}\,+\,\nabla\wtilde{r}_{\veps,M}\,\bigl(\Id-\Delta\bigr)\wtilde{r}_{\veps,M}\,,
$$
and therefore we find (with attention to the right sign)
$$
\mc{T}^2_{\veps,M}\,=\,\lan \nabla\wtilde{r}_{\veps,M}\,\bigl(\Id-\Delta\bigr)\wtilde{r}_{\veps,M}\ran\,+\,
\lan \frac{1}{\mf c}\,\left(\wtilde{W}^h_{\veps,M}\right)^\perp\,\wtilde{V}^h_{\veps,M}\cdot\nabla_h\mf{c} \ran\,+\,\mc{R}_{\veps,M}\,. 
$$

Now we have to deal with the second term in this last identity. Once again, we take advantage of the decomposition along the basis $\left\{\nabla_h\mf{c}\,,\,\nabla^\perp_h\mf{c}\right\}$.
For simplicity of exposition, we omit the cut-off away from the region $\{\nabla_h\mf c=0\}$ and we just give a sketch of the argument, since it
is analogous to what done above for $\mc{T}^1_{\veps,M}$.

First of all, we write
$$
\frac{1}{\mf c}\,\left(\wtilde{W}^h_{\veps,M}\right)^\perp\,\wtilde{V}^h_{\veps,M}\cdot\nabla_h\mf{c}\,=\,\frac{1}{\mf c}\left(\wtilde{V}^h_{\veps,M}\cdot\nabla_h\mf{c}\right)
\left(\wtilde{W}^h_{\veps,M}\cdot\nabla_h\mf{c}\,\frac{\nabla_h^\perp\mf{c}}{\left|\nabla_h\mf{c}\right|^2}\,+\,\left(\wtilde{W}^h_{\veps,M}\right)^\perp\cdot\nabla_h\mf{c}\,
\frac{\nabla_h\mf{c}}{\left|\nabla_h\mf{c}\right|^2}\right)\,.
$$
As before, the last term in the right-hand side vanishes when tested against a smooth $\psi\in{\rm Ker}\,\wtilde{\mc A}$.
Next, we obtain informations on $\wtilde{V}^h_{\veps,M}\cdot\nabla_h\mf{c}$ from the first equation in \eqref{eq:mean-free}:
$$
\wtilde{V}^h_{\veps,M}\cdot\nabla_h\mf{c}\,=\,\frac{1}{\mf c}\left(\veps\,\d_t\wtilde{W}^h_{\veps,M}\,-\,\left(\d_3^{-1}\,
{\rm curl}\left(\veps\,\wtilde{f}_{\veps,M}\,+\,\wtilde{g}_{\veps,M}\right)\right)^h\right)\cdot\nabla_h\mf{c}\,.
$$
Therefore, we obtain that
\begin{eqnarray*}
\frac{1}{\mf c}\,\left(\wtilde{W}^h_{\veps,M}\right)^\perp\,\wtilde{V}^h_{\veps,M}\cdot\nabla_h\mf{c} & = & \frac{\veps}{2\,\mf c^2}\,
\d_t\left|\wtilde{W}^h_{\veps,M}\cdot\nabla_h\mf{c}\right|^2\,\frac{\nabla_h^\perp\mf{c}}{\left|\nabla_h\mf{c}\right|^2}\,- \\
& & \qquad-\,\frac{1}{\mf c^2}\,\left(\d_3^{-1}\,{\rm curl}\left(\veps\,\wtilde{f}_{\veps,M}\,+\,\wtilde{g}_{\veps,M}\right)\right)^h\cdot\nabla_h\mf{c}\;
\frac{\nabla_h^\perp\mf{c}}{\left|\nabla_h\mf{c}\right|^2}\,,
\end{eqnarray*}
which is obviously a remainder in the sense of relation \eqref{eq:remainder}.

In the end, we have discovered that (paying attention to the right sign)
\begin{equation} \label{eq:T^2_final}
\mc{T}^2_{\veps,M}\,=\,\lan \nabla\wtilde{r}_{\veps,M}\,\bigl(\Id-\Delta\bigr)\wtilde{r}_{\veps,M}\ran\,+\,\mc{R}_{\veps,M}\,. 
\end{equation}
Notice that the density-dependent term exactly cancels out with the latter item in \eqref{comp-cpt:dens}.

\subsubsection{The limit equation}

Let us sum up what we have just proved. In order to pass to the limit in equation \eqref{eq:sing_weak}, we needed to treat the non-linearities coming
from the capillarity term and the convection term.

For the former, after applying Lemma \ref{l:capill_approx}, we have arrived at relation \eqref{comp-cpt:dens}:
\begin{eqnarray*}
-\,\int^T_0\!\!\int_\Omega\bigl(\Id-\Delta\bigr)r_{\veps,M}\;\nabla r_{\veps,M}\cdot\psi & = & 
-\,\int^T_0\!\!\int_\Omega\bigl(\Id-\Delta_h\bigr)\lan r_{\veps,M}\ran\;\nabla_h\lan r_{\veps,M}\ran\cdot\psi\,- \\
& & \qquad\qquad
-\,\int^T_0\!\!\int_\Omega\lan\bigl(\Id-\Delta\bigr)\wtilde{r}_{\veps,M}\;\nabla\wtilde{r}_{\veps,M}\ran\cdot\psi\,, \nonumber
\end{eqnarray*}
On the other hand, for the latter we exploited Lemma \ref{l:conv_approx} and, after manipulations, we have found
\begin{eqnarray*}
& & \hspace{-1cm}
-\,\int^T_0\!\!\int_\Omega V_{\veps,M}\otimes V_{\veps,M}:\nabla\psi\,=\,\int^T_0\!\!\int_\Omega\left(\mc{T}^1_{\veps,M}\,+\,\mc{T}^2_{\veps,M}\right)\,\cdot\,\psi \\
& & \qquad=\,\int^T_0\!\!\int_\Omega\biggl(\bigl(\Id-\Delta_h\bigr)\lan r_{\veps_M}\ran\,\nabla_h\lan r_{\veps_M}\ran\,+\,
\lan \nabla\wtilde{r}_{\veps,M}\,\bigl(\Id-\Delta\bigr)\wtilde{r}_{\veps,M}\ran\,+\,\mc{R}_{\veps,M}\biggr)\cdot\,\psi\,,
\end{eqnarray*}
where we used also relations \eqref{eq:T^1_final} and \eqref{eq:T^2_final}.
Therefore, thanks to the special cancellations involving the density-dependent terms, in the end we have proved the relation
$$
\int^T_0\!\!\int_\Omega\biggl(-\,\bigl(\Id-\Delta\bigr)r_{\veps,M}\;\nabla r_{\veps,M}\cdot\psi\,-\,V_{\veps,M}\otimes V_{\veps,M}:\nabla\psi\biggr)\,=\,
\int^T_0\!\!\int_\Omega\mc{R}_{\veps,M}\cdot\psi\,,
$$
which immediately implies, together with Lemmas \ref{l:capill_approx} and \ref{l:conv_approx}, that
$$
\lim_{M\ra+\infty}\,\lim_{\veps\ra0}
\int^T_0\!\!\int_\Omega\biggl(\frac{1}{\veps^2}\,\Delta\rho_\veps\,\nabla\rho_\veps\cdot\psi\,-\,\rho_\veps\,u_\veps\otimes u_\veps:\nabla\psi\biggr)dx\,dt\,=\,0\,.
$$

Then, thanks to the previous computations, we can pass to the limit in the weak formulation of our system: we obtain
$$
\int^T_0\!\!\int_\Omega\biggl(-u\cdot\d_t\psi-r\d_t\bigl(\Id-\Delta_h\bigr)\phi+\nu Du:\nabla\psi\biggr)dxdt\,=\,
\int_\Omega\bigl(u_0\cdot\psi(0)+r_0\bigl(\Id-\Delta_h\bigr)\phi(0)\bigr)dx
$$
for any $(\phi,\psi)$ test functions belonging to the kernel of the singular perturbation operator $\wtilde{\mc{A}}$.
Recall that, in particular, this implies the relation $\mf{c}\,\psi^h\,=\,\nabla_h^\perp\bigl(\Id-\Delta_h\bigr)\phi$.

Furthermore, we recall that also $(r,u)\,\in\,{\rm Ker}\,\wtilde{\mc{A}}$: then $\div u\equiv0$, $u=\bigl(u^h,0\bigr)$ and
$\mf{c}\,u^h\,=\,\nabla_h^\perp\bigl(\Id-\Delta_h\bigr)r$.

Setting $X(r)\,=\,\bigl(\Id-\Delta_h\bigr)r$ and $\wtilde{\phi}\,=\,\bigl(\Id-\Delta_h\bigr)\phi$, and
using that all the functions depend just on the horizontal variables, straightforward computations yield to
\begin{eqnarray*}
& & \hspace{-0.7cm} -\int^T_0\int_\Omega u\cdot\d_t\psi\,dx\,dt\,=\,
\int^T_0\int_{\R^2}\div_{\!h}\!\left(\frac{1}{\mf{c}^2}\,\nabla_hX(r)\right)\,\d_t\wtilde{\phi}\,dx^h\,dt \\
& & \hspace{-1cm} \nu\int^T_0\int_\Omega Du:\nabla\psi\,dx\,dt\,=\,
\nu\int^T_0\int_{\R^2}\mf{D}_{\mf{c}}\bigl(X(r)\bigr)\,:\,\nabla_h\bigl(\mf{c}^{-1}\,\nabla^\perp_h\wtilde{\phi}\,\bigr)\,dx^h\,dt \\
& & \qquad\qquad =\;\nu\int^T_0\int_{\R^2}\mf{D}_{\mf{c}}\bigl(X(r)\bigr)\,:\,\mf{D}_{\mf{c}}\bigl(\,\wtilde{\phi}\,\bigr)\,dx^h\,dt 
\,=\,\nu\int^T_0\int_{\R^2}\,^t\mf{D}_{\mf{c}}\,\circ\,\mf{D}_{\mf{c}}\bigl(X(r)\bigr)\,\wtilde{\phi}\,dx^h\,dt\,.
\end{eqnarray*}

Inserting these equalities into the previous relation completes the proof of Theorem \ref{t:sing_var}.

\subsection{Final remarks} \label{ss:regularity}

We conclude by making a few comments about our hypotheses on the regularity of the rotation coefficient, namely
$\mf{c}\in W^{1,\infty}(\R^2)$ with $\nabla_h\mf{c}\in\mc{C}_\mu(\R^2)$.

\begin{itemize}
\item At the level of uniform bounds (see Section \ref{s:bounds}), we asked for $\nabla_h\mf{c}\in L^\infty$ in order to close the
estimates for the BD-entropy (recall Lemma \ref{l:rot}), both in the case of vanishing and constant capillarity.
Nonetheless notice that, under an additional $L^2_T\bigl(L^2\bigr)$ bounds for the family of velocity fields
$\left(u_\veps\right)_\veps$, the control of the rotation term in \eqref{est:F} is straightforward for $\mf{c}\in L^\infty$:
$$
\frac{1}{\veps}\left|\int^t_0\!\!\int_\Omega\mf{c}(x^h)\,e^3\times u_\veps\cdot\nabla\rho_\veps\,dx\,d\tau\right|\,\leq\,
C\,\sqrt{T}\;\|\mf{c}\|_{L^\infty}\,\left\|u_\veps\right\|_{L^2_T(L^2)}\,\frac{\left\|\nabla\rho_\veps\right\|_{L^\infty_T(L^2)}}{\veps}\,,
$$
where the right-hand side is uniformly bounded by classical energy estimates (see Corollary \ref{c:E}). Remark that
the property $\left(u_\veps\right)_\veps\,\subset\,L^2_T\bigl(L^2\bigr)$ can be deduced, for instance, when an additional
laminar friction term is added to the momentum equation (see e.g. \cite{B-D_2003}).
\item The $W^{1,\infty}$ regularity seems to be necessary in the compensated compactness argument: in particular, we exploited it for resorting to
the equation for the vorticity and for decomposing some vector fields along
$\left\{\nabla_h\mf{c}\,,\,\nabla_h^\perp\mf{c}\right\}$. \\
Notice that, by Proposition \ref{p:TP}, $\nabla_h\mf{c}$ is parallel to $\nabla_h(\Id-\Delta_h)r$;
on the other hand, we have no available informations for $\nabla_h(\Id-\Delta_h)r$: in particular, it is not clear for us how to avoid
the non-degeneracy condition \eqref{eq:non-crit}.
\item Finally, the requirement $\nabla_h\mf{c}\in\mc{C}_\mu$ (for some admissible modulus of continuity $\mu$) is fundamental in order to
get \eqref{est:curl_rem}, which is a key property in our proof. Notice that paraproduct decomposition does not seem to help in gaining
anything at this level. This is the main reason why imposing some regularity on
$\nabla_h\mf{c}$ seems to be necessary for proving the result, and it seems not to be possible to go below the Lipschitz threshold
$\mf{c}\in W^{1,\infty}$.
\end{itemize}

Related to the non-degeneracy condition, let us remark also that there is a huge gap between the case of constant rotation axis
(for which $\nabla\mf{c}\equiv0$ everywhere) and the case of variable rotation axis considered here, for which
we need hypothesis \eqref{eq:non-crit}.
It seems very likely that this difference is just ``artificial'', and it depends on the techniques used in the proof.
It could be interesting to find a different approach to the problem, in order to being able to treat coefficients which can be constant (i.e. whose gradient
can vanish) in a region of non-zero Lebesgue measure.

\appendix

\section{Appendix} \label{app}

We collect in this appendix some well-known results about Littlewood-Paley theory and admissible moduli of continuity, and the proof of
some technical lemmas.

\subsection{Fourier Analysis toolbox} \label{app:LP}

Let us recall some tools from Fourier Analysis, which we exploited in our proof.
We refer e.g. to \cite{B-C-D} (Chapter 2) and \cite{M-2008} (Chapters 4 and 5) for the details.

For simplicity of exposition, let us deal with the $\R^d$ case; however, the construction can be adapted to the $d$-dimensional
torus $\mbb{T}^d$, and then also to the case of $\R^{d_1}\times\mbb{T}^{d_2}$.

\medbreak
First of all, let us introduce the \emph{Littlewood-Paley decomposition}, based on a non-homogeneous dyadic partition of unity in the
Phase Space.

We fix a smooth radial function $\chi$ supported in the ball $B(0,2)$,  equal to $1$ in a neighborhood of $B(0,1)$
and such that $r\mapsto\chi(r\,e)$ is nonincreasing over $\R_+$ for all unitary vectors $e\in\R^d$. Set
$\varphi\left(\xi\right)=\chi\left(\xi\right)-\chi\left(2\xi\right)$ and $\vphi_j(\xi):=\vphi(2^{-j}\xi)$ for all $j\geq0$.

The dyadic blocks $(\Delta_j)_{j\in\Z}$ are defined by\footnote{Throughout we agree  that  $f(D)$ stands for 
the pseudo-differential operator $u\mapsto\mc{F}^{-1}(f\,\mc{F}u)$.} 
$$
\Delta_j:=0\ \hbox{ if }\ j\leq-2,\quad\Delta_{-1}:=\chi(D)\qquad\mbox{ and }\qquad
\Delta_j:=\varphi(2^{-j}D)\; \mbox{ if }\;  j\geq0\,.
$$
We  also introduce the low frequency cut-off operators: for any $j\geq0$,
\begin{equation} \label{eq:low-freq}
S_ju\;:=\;\chi\bigl(2^{-j}D\bigr)u\;=\;\sum_{k\leq j-1}\Delta_{k}u\,.
\end{equation}

The following classical property holds true:
for any $u\in\mc{S}'$, one has the equality $u=\sum_{j}\Delta_ju$ in the sense of $\mc{S}'$.
We will freely use this fact in the sequel.

Spectrally localized functions have nice properties with respect to the action of derivatives. This is explained by the so-called
\emph{Bernstein's inequalities}.
  \begin{lemma} \label{l:bern}
Let  $0<r<R$. A constant $C$ exists so that, for any nonnegative integer $k$, any couple $(p,q)$ 
in $[1,+\infty]^2$ with  $p\leq q$ 
and any function $u\in L^p$,  we  have, for all $\lambda>0$,
$$
\displaylines{
{\rm supp}\, \widehat u \subset   B(0,\lambda R)\quad
\Longrightarrow\quad
\|\nabla^k u\|_{L^q}\, \leq\,
 C^{k+1}\,\lambda^{k+d\left(\frac{1}{p}-\frac{1}{q}\right)}\,\|u\|_{L^p}\;;\cr
{\rm supp}\, \widehat u \subset \{\xi\in\R^d\,|\, r\lambda\leq|\xi|\leq R\lambda\}
\quad\Longrightarrow\quad C^{-k-1}\,\lambda^k\|u\|_{L^p}\,
\leq\,
\|\nabla^k u\|_{L^p}\,
\leq\,
C^{k+1} \, \lambda^k\|u\|_{L^p}\,.
}$$
\end{lemma}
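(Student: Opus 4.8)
These are the classical Bernstein inequalities, and the argument is standard. The first move is to reduce everything to $\lambda=1$ by a dilation: setting $u_\lambda(x):=u(x/\lambda)$, one has $\widehat{u_\lambda}(\xi)=\lambda^d\,\widehat u(\lambda\xi)$, so each support hypothesis on $\widehat u$ (with parameter $\lambda$) becomes the corresponding hypothesis for $u_\lambda$ with $\lambda=1$; since moreover $\|\nabla^ku_\lambda\|_{L^q}=\lambda^{d/q-k}\|\nabla^ku\|_{L^q}$ and $\|u_\lambda\|_{L^p}=\lambda^{d/p}\|u\|_{L^p}$, comparing the powers of $\lambda$ turns a bound for $u_\lambda$ into exactly the announced bound for $u$ (in particular the exponent $k+d(1/p-1/q)$, resp. $\pm k$, comes out automatically). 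So from now on $\lambda=1$, and $\|\nabla^ku\|_{L^q}$ is read as, say, $\sum_{|\alpha|=k}\|\partial^\alpha u\|_{L^q}$.

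For the first inequality, fix $\phi\in\mc{D}(\R^d)$ with $\phi\equiv1$ on $B(0,R)$. If $\Supp\widehat u\subset B(0,R)$ then $\widehat u=\phi\,\widehat u$ and, crucially, $\partial^\alpha u$ is \emph{also} spectrally supported in $B(0,R)$ for every multi-index $\alpha$, because $\widehat{\partial^\alpha u}=(i\xi)^\alpha\,\widehat u$. Writing $\partial_ju=g_j*u$ with $g_j:=\mc{F}^{-1}(i\xi_j\,\phi)\in\mc{S}(\R^d)$ and iterating this identity (which is legitimate precisely because each intermediate derivative stays localized in $B(0,R)$), I get, for $|\alpha|=k$, a representation $\partial^\alpha u=g_{j_1}*\cdots*g_{j_k}*u$. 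The iterated Young inequality, with $1/s:=1+1/q-1/p\in[0,1]$ (this is the only place where $p\le q$ is used), then gives $\|\partial^\alpha u\|_{L^q}\le\big(\max_j\|g_j\|_{L^1}\big)^{k-1}\big(\max_j\|g_j\|_{L^s}\big)\,\|u\|_{L^p}$. Since $g_j\in\mc{S}$, the quantities $\|g_j\|_{L^1}$ and $\sup_{s\in[1,\infty]}\|g_j\|_{L^s}$ are finite and depend only on $d$ and $R$; summing over the at most $d^k$ multi-indices of length $k$ costs only a further geometric factor, which yields a constant of the required form $C^{k+1}$, and undoing the dilation finishes this part.

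For the second inequality, the upper bound $\|\nabla^ku\|_{L^p}\le C^{k+1}\lambda^k\|u\|_{L^p}$ is just the first inequality with $q=p$. For the lower bound I would exploit the ellipticity of $\nabla$ away from the origin: pick $\widetilde\phi\in\mc{D}(\R^d\setminus\{0\})$ with $\widetilde\phi\equiv1$ on $\{r\le|\xi|\le R\}$; from $1=\sum_{j=1}^d\xi_j^2/|\xi|^2$ and $\xi_j\,\widehat u=-i\,\widehat{\partial_ju}$, valid on $\Supp\widehat u$, one obtains
\[
\widehat u(\xi)\;=\;-i\sum_{j=1}^d\frac{\xi_j\,\widetilde\phi(\xi)}{|\xi|^2}\;\widehat{\partial_ju}(\xi)\,,
\]
hence $u=-i\sum_jK_j*\partial_ju$ with $K_j:=\mc{F}^{-1}\big(\xi_j\,\widetilde\phi(\xi)/|\xi|^2\big)\in\mc{S}(\R^d)$, so that $\|u\|_{L^p}\le C\,\|\nabla u\|_{L^p}$ with $C$ depending only on $d,r,R$. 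Since $\partial^\beta u$ is again spectrally supported in the same annulus for every $\beta$, I iterate this $k$-fold to reach $\|u\|_{L^p}\le C^k\|\nabla^ku\|_{L^p}$ (absorbing combinatorial factors as before), and a final dilation restores the $\lambda^{-k}$.

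\textbf{Main point to watch.} Nothing here is deep; the only care needed is to produce the constants in the precise exponential form $C^{k+1}$ demanded by the statement. This is exactly why one argues by \emph{iterating} the one-derivative estimates — so that passing from $k-1$ to $k$ only ever costs one new factor of a fixed constant — rather than by differentiating a fixed cutoff $k$ times, which a priori would cost a factorial in $k$.
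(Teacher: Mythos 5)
The paper does not actually prove Lemma~\ref{l:bern}: it is stated as a classical black-box result, with proofs delegated to \cite{B-C-D} (Chapter~2) and \cite{M-2008}. So there is no in-paper argument to compare against; the only meaningful check is whether your proof is correct, and it is. Your argument (dilation to reduce to $\lambda=1$, spectral cut-off plus Young's inequality with $1/s=1+1/q-1/p$ for the smoothing direction, and the annulus case via the ellipticity identity $\widehat u=-i\sum_j\xi_j\widetilde\phi|\xi|^{-2}\widehat{\partial_ju}$) is precisely the standard textbook proof of Bernstein's inequalities, and the exponents of $\lambda$ and the $C^{k+1}$ constant form are tracked correctly.

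One small remark on your ``main point to watch'': the iteration trick is indeed a clean way to guarantee a geometric constant, but the more common direct route --- convolving once with $\partial^\alpha h$ where $h=\mc{F}^{-1}\phi$ --- does \emph{not} in fact produce a factorial. Differentiating $(i\xi)^\alpha\phi$ a fixed number $\le d+1$ of times (to gain decay of $\partial^\alpha h$) only ever applies Leibniz' rule up to order $d+1$, yielding factors bounded by $k^{d+1}R^k$, which is again $\le C^{k+1}$. So both routes work; yours is simply tidier for the constant bookkeeping.
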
   

By use of Littlewood-Paley decomposition, we can define the class of Besov spaces.
\begin{defin} \label{d:B}
  Let $s\in\R$ and $1\leq p,r\leq+\infty$. The \emph{non-homogeneous Besov space}
$B^{s}_{p,r}$ is defined as the subset of tempered distributions $u$ for which
$$
\|u\|_{B^{s}_{p,r}}\,:=\,
\left\|\left(2^{js}\,\|\Delta_ju\|_{L^p}\right)_{j\in\N}\right\|_{\ell^r}\,<\,+\infty\,.
$$
\end{defin}

Besov spaces are interpolation spaces between the Sobolev ones. In fact, for any $k\in\N$ and $p\in[1,+\infty]$
we have the chain of continuous embeddings $B^k_{p,1}\hookrightarrow W^{k,p}\hookrightarrow B^k_{p,\infty}$.
Moreover, for all $s\in\R$ we have $B^s_{2,2}\equiv H^s$, with equivalence of the respective norms.

We report here some statements which we used in our analysis: their proof are given in \cite{F}. We refer again to \cite{B-C-D}
and \cite{M-2008} for more details and more general results.
\begin{lemma} \label{l:density}
\begin{itemize}
 \item[(i)] For $1\leq p\leq2$, one  has $\|f\|_{L^2}\,\leq\,C\bigl(\|f\|_{L^p}\,+\,\|\nabla f\|_{L^2}\bigr)$.
\item[(ii)] For any $0<\delta\leq1/2$ 
and any $1\leq p\leq +\infty$, one has
$$
\|f\|_{L^\infty}\,\leq\,C\left(\|f\|_{L^p}\,+\,\|\nabla f\|^{(1/2)-\delta}_{L^2}\,
\left\|\nabla^2f\right\|^{(1/2)+\delta}_{L^2}\,\right).
$$
\item[(iii)] Let $1\leq p\leq2$ such that $1/p\,<\,1/d\,+\,1/2$. For any $j\in\N$, there exists a constant $C_j$, depending just on
$j$, $d$ and $p$, and going to $0$ for $j\ra+\infty$, such that
$$
\left\|\left(\Id\,-\,S_j\right)f\right\|_{L^2}\,\leq\,C_j\,\left\|\nabla f\right\|_{B^0_{p,\infty}}\,.
$$
In particular, if $\nabla f=\nabla f_1 + \nabla f_2$, with $\nabla f_1\in B^0_{2,\infty}$ and $\nabla f_2\in B^0_{p,\infty}$, then
$$
\left\|\left(\Id\,-\,S_j\right)f\right\|_{L^2}\,\leq\,\wtilde{C}_j\,\left(\left\|\nabla f_1\right\|_{B^0_{2,\infty}}\,+\,
\left\|\nabla f_2\right\|_{B^0_{p,\infty}}\right)\,,
$$
for a new constant $\wtilde{C}_j$ still going to $0$ for $j\ra+\infty$.
\end{itemize}
\end{lemma}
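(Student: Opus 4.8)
The plan is to derive all three inequalities from the Littlewood--Paley machinery recalled in Appendix \ref{app:LP}: the dyadic blocks $\Delta_j$ and the low-frequency cut-offs $S_j$, the Bernstein inequalities of Lemma \ref{l:bern}, and the norm equivalence $H^0\equiv B^0_{2,2}$, i.e. $\|g\|_{L^2}^2\sim\sum_j\|\Delta_jg\|_{L^2}^2$. Throughout, $d$ denotes the (finite) dimension of the ambient domain; only $d\le 3$ will actually be needed.

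For (i), I would split $f=\Delta_{-1}f+\sum_{j\ge 0}\Delta_jf$. On the low-frequency part the first Bernstein inequality gives $\|\Delta_{-1}f\|_{L^2}\le C\|\Delta_{-1}f\|_{L^p}$ (here $p\le 2$ is used), and $\|\Delta_{-1}f\|_{L^p}\le C\|f\|_{L^p}$ since $\Delta_{-1}=\chi(D)$ is an $L^p$-bounded multiplier. For the high frequencies the second Bernstein inequality yields $\|\Delta_jf\|_{L^2}\le C\,2^{-j}\|\Delta_j\nabla f\|_{L^2}$ for $j\ge 0$, whence $\sum_{j\ge 0}\|\Delta_jf\|_{L^2}^2\le C\sum_{j\ge 0}2^{-2j}\|\Delta_j\nabla f\|_{L^2}^2\le C\|\nabla f\|_{L^2}^2$; adding the two contributions gives (i). For (ii) one uses the same scheme with an interpolation on each dyadic block. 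One has $\|\Delta_{-1}f\|_{L^\infty}\le C\|\Delta_{-1}f\|_{L^p}\le C\|f\|_{L^p}$, while, writing $\theta:=\tfrac12+\delta\in(\tfrac12,1]$, for $j\ge 0$ the Bernstein inequalities give $\|\Delta_jf\|_{L^\infty}\le C\,2^{jd/2}\|\Delta_jf\|_{L^2}$ and
\[
\|\Delta_jf\|_{L^2}=\|\Delta_jf\|_{L^2}^{1-\theta}\,\|\Delta_jf\|_{L^2}^{\theta}\le C\,2^{-j(1-\theta)}\|\Delta_j\nabla f\|_{L^2}^{1-\theta}\;2^{-2j\theta}\|\Delta_j\nabla^2 f\|_{L^2}^{\theta}\,,
\]
so that $\|\Delta_jf\|_{L^\infty}\le C\,2^{j(d/2-1-\theta)}\|\Delta_j\nabla f\|_{L^2}^{1-\theta}\|\Delta_j\nabla^2 f\|_{L^2}^{\theta}$. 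Since $d\le 3$ and $\theta>\tfrac12$, the exponent is $\le-\delta<0$, and I would sum over $j\ge 0$ by Hölder's inequality in $j$ with the three exponents $2$, $4/(1-2\delta)$ and $4/(1+2\delta)$, whose reciprocals $\tfrac12+\tfrac{1-2\delta}{4}+\tfrac{1+2\delta}{4}$ add up to $1$; each $\ell^2$-in-$j$ factor is controlled via $H^0\equiv B^0_{2,2}$, and the outcome is exactly $C_\delta\,\|\nabla f\|_{L^2}^{(1/2)-\delta}\|\nabla^2 f\|_{L^2}^{(1/2)+\delta}$. Adding the low-frequency term gives (ii).

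For (iii), write $(\Id-S_j)f=\sum_{k\ge j}\Delta_kf$; all indices $k\ge j\ge 0$ lie in the annular regime, so the second Bernstein inequality gives $\|\Delta_kf\|_{L^2}\le C\,2^{-k}\|\Delta_k\nabla f\|_{L^2}$, while the first one (with exponents $p\le 2$) gives $\|\Delta_k\nabla f\|_{L^2}\le C\,2^{kd(1/p-1/2)}\|\Delta_k\nabla f\|_{L^p}\le C\,2^{kd(1/p-1/2)}\|\nabla f\|_{B^0_{p,\infty}}$. Hence $\|\Delta_kf\|_{L^2}\le C\,2^{k\beta}\|\nabla f\|_{B^0_{p,\infty}}$ with $\beta:=d(1/p-1/2)-1$, and the hypothesis $1/p<1/d+1/2$ is precisely what makes $\beta<0$. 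Using $H^0\equiv B^0_{2,2}$,
\[
\|(\Id-S_j)f\|_{L^2}^2\le C\sum_{k\ge j}\|\Delta_kf\|_{L^2}^2\le C\Bigl(\sum_{k\ge j}2^{2k\beta}\Bigr)\|\nabla f\|_{B^0_{p,\infty}}^2=:C_j^2\,\|\nabla f\|_{B^0_{p,\infty}}^2\,,
\]
with $C_j\to 0$ as $j\to+\infty$ because $\beta<0$ forces the geometric tail to vanish. For the last assertion, $\nabla(f-f_1-f_2)\equiv 0$, so $f-f_1-f_2$ is (locally) constant and is annihilated by $\Id-S_j$; thus $(\Id-S_j)f=(\Id-S_j)f_1+(\Id-S_j)f_2$, and applying the estimate just proved to $f_1$ with exponent $2$ and to $f_2$ with exponent $p$ (and taking $\widetilde C_j:=\max\{C_j^{(2)},C_j^{(p)}\}\to 0$) gives the claim.

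There is no genuine obstacle here: the whole argument is a bookkeeping of powers of $2$. The only points needing care are to check that the relevant geometric series in $j$ (resp.\ in $k$) converge --- which is exactly where the hypotheses $p\le 2$, $\delta\le\tfrac12$ (so that $1-\theta\ge 0$) and $1/p<1/d+1/2$ come in --- and, in (ii), to set up the three-factor Hölder inequality in the summation index with the exponents above, so that the output is genuinely the product $\|\nabla f\|_{L^2}^{(1/2)-\delta}\|\nabla^2 f\|_{L^2}^{(1/2)+\delta}$ rather than a larger Besov-type quantity.
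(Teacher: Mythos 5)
Your proof is correct. A small caveat first: the paper does not actually prove Lemma \ref{l:density} here --- it explicitly refers to reference \cite{F} for the proof --- so a line-by-line comparison with a proof in this document is not possible. That said, your argument is exactly the one the appendix is set up to support (dyadic blocks $\Delta_j$, cut-offs $S_j$, Bernstein's Lemma \ref{l:bern}, and the equivalence $L^2\equiv B^0_{2,2}$), and all the exponent bookkeeping checks out.

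Two small remarks on the exposition, neither of which affects correctness.

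First, in (ii) you should make explicit that the constant depends on $\delta$ (the $\ell^2$ sum $\bigl(\sum_{j\ge 0}2^{-2j\delta}\bigr)^{1/2}$ blows up as $\delta\to 0$), and that at the endpoint $\delta=1/2$ the three-factor H\"older degenerates to a two-factor one with exponents $(2,\infty,2)$ --- harmless, but worth noting since $1-2\delta=0$ there. You correctly flag that the sign condition $d/2-3/2-\delta<0$ requires $d\le 3$; this is implicit in the lemma (the domain is $\R^2\times\mbb T^1$, so $d=3$), but the statement does not say so, and the estimate is false for $d\ge 4$ with small $\delta$.

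Second, in the last assertion of (iii) the reduction ``$f-f_1-f_2$ is constant and is annihilated by $\Id-S_j$'' is slightly slippery on $\R^2\times\mbb T^1$, where a nonzero constant is not in $L^2$; the identity $(\Id-S_j)c=0$ holds as a Fourier-multiplier identity but one is tacitly manipulating objects outside the ambient $L^2$ space. A cleaner route that avoids this is to skip the reduction altogether: for $k\ge j$, write
$$
\|\Delta_k f\|_{L^2}\,\le\,C\,2^{-k}\,\|\Delta_k\nabla f\|_{L^2}\,\le\,
C\,2^{-k}\,\|\Delta_k\nabla f_1\|_{L^2}\,+\,C\,2^{-k}\,\|\Delta_k\nabla f_2\|_{L^2}\,,
$$
then bound the first term by $C\,2^{-k}\,\|\nabla f_1\|_{B^0_{2,\infty}}$ and the second by $C\,2^{k\beta}\,\|\nabla f_2\|_{B^0_{p,\infty}}$ with your $\beta=d(1/p-1/2)-1<0$, and sum the two geometric tails in $\ell^2$ over $k\ge j$. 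This gives $\widetilde C_j\to 0$ directly without any detour through constants.
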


Finally, let us recall some notions of \emph{homogeneous} dyadic decomposition. Namely, one can rather work with homogeneous
dyadic blocks $(\dot\Delta_j)_{j\in\Z}$, defined as
$$
\dot\Delta_j:=\varphi(2^{-j}D)\qquad \mbox{ for all }\quad  j\in\Z\,.
$$
Then, we can introduce the homogeneous Besov spaces $\dot{B}^s_{p,r}$ by the property
$$
\|u\|_{\dot{B}^{s}_{p,r}}\,:=\,
\left\|\left(2^{js}\,\|\dot\Delta_ju\|_{L^p}\right)_{\!j\in\Z}\,\right\|_{\ell^r}\,<\,+\infty
$$
(plus some conditions on low frequencies).
We do not enter into the details, for which we refer to Chapter 2 of \cite{B-C-D}. Let us however recall refined embeddings of
homogeneous Besov spaces into Lebesgue spaces (see Theorem 2.40 of \cite{B-C-D}).
\begin{prop} \label{p:emb_hom-besov}
For any $p\in[2,+\infty]$, one has the embeddings $\dot{B}^0_{p,2}\,\hra\,L^p$ and $L^{p'}\,\hra\,\dot{B}^0_{p',2}$,
where $p'$ is defined by the condition $1/p'=1-1/p$.
\end{prop}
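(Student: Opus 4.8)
The plan is to recover both embeddings from two classical facts, so that the argument becomes pure bookkeeping; this is precisely Theorem~2.40 of \cite{B-C-D}. The first ingredient is the Littlewood--Paley square-function characterization of Lebesgue spaces: for every $q\in\,]1,+\infty[\,$ there are constants $0<c_q\le C_q$, depending only on $q$ and the dimension, such that, for every $f\in L^q$,
\begin{equation*}
c_q\,\|f\|_{L^q}\;\le\;\Bigl\|\bigl(\textstyle\sum_{j\in\Z}|\dot\Delta_jf|^2\bigr)^{1/2}\Bigr\|_{L^q}\;\le\;C_q\,\|f\|_{L^q}\,.
\end{equation*}
The second ingredient is Minkowski's inequality for mixed norms: if $1\le q_1\le q_2\le+\infty$, then for any sequence $(f_j)_{j\in\Z}$ of measurable functions one has $\bigl(\sum_j\|f_j\|_{L^{q_1}}^{q_2}\bigr)^{1/q_2}\le\bigl\|(\sum_j|f_j|^{q_2})^{1/q_2}\bigr\|_{L^{q_1}}$. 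All computations below are understood in the usual homogeneous functional framework (tempered distributions modulo polynomials, see Chapter~2 of \cite{B-C-D}); in particular, for $p<+\infty$ the series $\sum_j\dot\Delta_jf$ of an element $f\in\dot{B}^0_{p,2}$ converges in $L^p$, so no ambiguity arises.

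For the embedding $\dot{B}^0_{p,2}\hra L^p$ with $2\le p<+\infty$, I would proceed as follows: since $p/2\ge1$, the map $g\mapsto\|g\|_{L^{p/2}}$ is a genuine norm, so the triangle inequality gives
\begin{equation*}
\Bigl\|\bigl(\textstyle\sum_j|\dot\Delta_jf|^2\bigr)^{1/2}\Bigr\|_{L^p}^2\;=\;\Bigl\|\sum_j|\dot\Delta_jf|^2\Bigr\|_{L^{p/2}}\;\le\;\sum_j\|\dot\Delta_jf\|_{L^p}^2\;=\;\|f\|_{\dot{B}^0_{p,2}}^2\,,
\end{equation*}
and then the lower bound in the square-function theorem yields $\|f\|_{L^p}\le c_p^{-1}\|f\|_{\dot{B}^0_{p,2}}$. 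For the embedding $L^{p'}\hra\dot{B}^0_{p',2}$ with $1<p'\le2$, I would instead apply Minkowski's inequality with $q_1=p'\le q_2=2$:
\begin{equation*}
\|f\|_{\dot{B}^0_{p',2}}\;=\;\Bigl(\sum_j\|\dot\Delta_jf\|_{L^{p'}}^2\Bigr)^{1/2}\;\le\;\Bigl\|\bigl(\textstyle\sum_j|\dot\Delta_jf|^2\bigr)^{1/2}\Bigr\|_{L^{p'}}\;\le\;C_{p'}\,\|f\|_{L^{p'}}\,,
\end{equation*}
using the upper bound in the square-function theorem in the last step. Equivalently, this second embedding is the transpose of the first, via the dualities $(\dot{B}^0_{p,2})'=\dot{B}^0_{p',2}$ and $(L^p)'=L^{p'}$.

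The only substantive input is the Littlewood--Paley theorem, which I would simply quote from \cite{B-C-D} or \cite{M-2008} rather than reprove; granting it, the sole genuinely delicate point is the endpoint $p=+\infty$ (equivalently $p'=1$), where the square-function characterization of $L^q$ is no longer available and one must invoke the separate classical embeddings contained in Theorem~2.40 of \cite{B-C-D}. Since in this paper Proposition~\ref{p:emb_hom-besov} is only used with $p=3$, this endpoint is irrelevant for the applications.
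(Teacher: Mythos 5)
Your argument is correct on the range where it is needed, but note first that the paper does not prove this proposition at all: it is stated as a recall and justified solely by a pointer to Theorem~2.40 of \cite{B-C-D}. What you have written is essentially a self-contained reproof of that cited theorem, via the two standard ingredients (the Littlewood--Paley square-function characterization of $L^q$ for $1<q<\infty$, and Minkowski's inequality for exchanging the $\ell^2$ and $L^{p'}$ norms when $p'\le 2$). Both displayed chains of inequalities are sound, and the duality remark is a legitimate alternative for the second embedding in the reflexive range. Since the proposition is only invoked with $p=3$ (to upgrade $\rho^{3/2}u$ from $L^2+L^{3/2}$ with gradient in $L^2+L^{3/2}$ to $L^3$), your proof fully covers the paper's needs.

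One correction, though, on the endpoint: you present $p=+\infty$ (equivalently $p'=1$) as a delicate case that can be recovered from ``separate classical embeddings''. In fact the embeddings \emph{fail} there: $\dot{B}^0_{\infty,2}\not\hra L^\infty$ (a lacunary series $\sum_k k^{-1}e^{i2^kx}$ has $\ell^2$ block norms but is unbounded, by Sidon's theorem), and dually $L^1\not\hra\dot{B}^0_{1,2}$. Theorem~2.40 of \cite{B-C-D} is accordingly stated for $p<+\infty$, and the proposition's ``$p\in[2,+\infty]$'' should be read as $p\in[2,+\infty[\,$. This is a defect of the statement as printed rather than of your argument, and it is harmless for the applications, but your proof should exclude the endpoint rather than defer it.
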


\subsection{Admissible moduli of continuity} \label{app:continuity}

In this paragraph we recall some basic definitions and properties about general moduli of continuity.
We refer to Section 2.11 of \cite{B-C-D} for a more indeep discussion.

\begin{defin} \label{d:mod-cont}
A \emph{modulus of continuity} is a continuous non-decreasing function $\mu:[0,1]\,\longrightarrow\,\R_+$ such that $\mu(0)=0$.

It is said to be \emph{admissible} if the function $\Gamma_\mu$, defined by the relation
$$
\Gamma_\mu(s)\,:=\,s\,\mu(1/s)\,,
$$
is non-decreasing on $[1,+\infty[\,$ and it verifies, for some constant $C>0$ and any $s\geq1$,
$$
\int_s^{+\infty}\sigma^{-2}\,\Gamma_\mu(\sigma)\,d\sigma\,\leq\,C\,s^{-1}\,\Gamma_\mu(s)\,.
$$
\end{defin}

Given a modulus of continuity $\mu$, we can define the space $\mc{C}_\mu(\R^d)$ as the set of real-valued functions $a\in L^\infty(\R^d)$
such that
$$
|a|_{\mc{C}_\mu}\,:=\,\sup_{|y|\in\,]0,1]}\frac{\left|a(x+y)\,-\,a(x)\right|}{\mu(|y|)}\,<\,+\infty\,.
$$
We also define $\|a\|_{\mc{C}_\mu}\,:=\,\|a\|_{L^\infty}\,+\,|a|_{\mc{C}_\mu}$.

On the other hand, for an increasing $\Gamma$ on $[1,+\infty[\,$, we define the space $B_\Gamma(\R^d)$ as the set
of real-valued functions $a\in L^\infty(\R^d)$ such that
$$
|a|_{B_\Gamma}\,:=\,\sup_{j\geq0}\frac{\left\|\nabla S_ja\right\|_{L^\infty}}{\Gamma(2^j)}\,<\,+\infty\,,
$$
where $S_j$ is the low-frequency cut-off operator of a Littlewood-Paley decomposition, as introduced above.
We also set $\|a\|_{B_\Gamma}\,:=\,\|a\|_{L^\infty}\,+\,|a|_{B_\Gamma}$.

One has the following result (see Proposition 2.111 of \cite{B-C-D}).
\begin{prop} \label{p:cont-equiv}
Let $\mu$ be an admissible modulus of continuity. Then $\mc{C}_\mu(\R^d)\,=\,B_{\Gamma_\mu}(\R^d)$,
and the respective norms are equivalent. Moreover, for any $a\in\mc{C}_\mu(\R^d)$ one has
$$
\left\|\Delta_ja\right\|_{L^\infty}\,\leq\,C\,\mu(2^{-j})
$$
for all $j\geq-1$, where the constant $C$ just depend on $\|a\|_{\mc{C}_\mu}$.
\end{prop}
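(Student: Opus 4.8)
The plan is to prove the two continuous inclusions $\mc{C}_\mu(\R^d)\hookrightarrow B_{\Gamma_\mu}(\R^d)$ and $B_{\Gamma_\mu}(\R^d)\hookrightarrow\mc{C}_\mu(\R^d)$, reading off the displayed estimate on $\|\Delta_j a\|_{L^\infty}$ along the way. The only structural facts about $\mu$ I will use are elementary consequences of admissibility: since $\Gamma_\mu(s)=s\,\mu(1/s)$ is non-decreasing on $[1,+\infty[$, the map $t\mapsto\mu(t)/t$ is non-increasing on $]0,1]$, which together with the monotonicity of $\mu$ yields the dilation bound $\mu(\lambda\,t)\le(1+\lambda)\,\mu(t)$ whenever $0<t\le1$ and $0<\lambda\le 1/t$, as well as the lower bound $\mu(2^{-j})\ge\mu(1)\,2^{-j}$ for $j\ge0$ (so that rapidly decreasing tails are negligible against $\mu(2^{-j})$); the second, integral, condition in Definition~\ref{d:mod-cont} will be invoked exactly once, for a tail summation.

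First I would prove $\mc{C}_\mu\hookrightarrow B_{\Gamma_\mu}$. Write $S_ju=\check\chi_j*u$ with $\check\chi_j(x)=2^{jd}\check\chi(2^jx)$; since $\widehat{\partial_k\check\chi}(0)=0$, one has $\int\partial_k\check\chi_j\,dy=0$, so for $a\in\mc{C}_\mu$ and $j\ge0$ we may write $\partial_kS_ja(x)=\int\partial_k\check\chi_j(y)\,\bigl(a(x-y)-a(x)\bigr)\,dy$. Splitting the integral into the regions $|y|\le1$ and $|y|>1$, estimating the increment of $a$ by $|a|_{\mc{C}_\mu}\,\mu(|y|)$ and by $2\|a\|_{L^\infty}$ respectively, rescaling $z=2^jy$, and using $\mu(2^{-j}|z|)\le(1+|z|)\,\mu(2^{-j})$ together with the Schwartz decay of $\partial_k\check\chi$, one gets $\|\nabla S_ja\|_{L^\infty}\le C\,\|a\|_{\mc{C}_\mu}\,2^j\,\mu(2^{-j})=C\,\|a\|_{\mc{C}_\mu}\,\Gamma_\mu(2^j)$, that is $a\in B_{\Gamma_\mu}$ with $\|a\|_{B_{\Gamma_\mu}}\le C\,\|a\|_{\mc{C}_\mu}$.

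For the converse, I would first establish the pointwise estimate for $a\in B_{\Gamma_\mu}$. Write $\varphi(\xi)=\sum_{l=1}^d\xi_l\,\theta_l(\xi)$ with $\theta_l(\xi):=\xi_l\,\varphi(\xi)/|\xi|^2$ smooth and supported in the annulus carrying $\varphi$; since $\theta_l(2^{-j}D)=\theta_l(2^{-j}D)\,S_{j+2}$ and $\theta_l(2^{-j}D)$ is bounded on $L^\infty$ uniformly in $j$, one has for $j\ge0$
$$
\Delta_j a\,=\,c\,2^{-j}\sum_{l=1}^d\theta_l(2^{-j}D)\,\partial_l S_{j+2}a\,,\qquad
\|\Delta_j a\|_{L^\infty}\,\le\,C\,2^{-j}\,\|\nabla S_{j+2}a\|_{L^\infty}\,\le\,C\,|a|_{B_{\Gamma_\mu}}\,\mu(2^{-j})\,,
$$
where $c$ is an absolute constant and we used $2^{-j}\,\Gamma_\mu(2^{j+2})=4\,\mu(2^{-j-2})\le4\,\mu(2^{-j})$; the case $j=-1$ is trivial since $\|\Delta_{-1}a\|_{L^\infty}\le\|a\|_{L^\infty}$.

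Finally I would close the loop $B_{\Gamma_\mu}\hookrightarrow\mc{C}_\mu$ by telescoping: for $0<|y|\le1$ pick $N$ with $2^{-N}\le|y|<2^{-N+1}$ and split $a=S_Na+\sum_{k\ge N}\Delta_k a$, so that $|a(x+y)-a(x)|\le|y|\,\|\nabla S_Na\|_{L^\infty}+\sum_{k\ge N}2\|\Delta_k a\|_{L^\infty}$. The first term is $\le|y|\,|a|_{B_{\Gamma_\mu}}\,2^N\mu(2^{-N})\le2\,|a|_{B_{\Gamma_\mu}}\,\mu(|y|)$, while the second is $\le C\,|a|_{B_{\Gamma_\mu}}\sum_{k\ge N}\mu(2^{-k})$ by the estimate just proved. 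The one genuinely non-routine point of the whole argument is the summability $\sum_{k\ge N}\mu(2^{-k})\le C\,\mu(2^{-N})$: bounding the sum by $\mu(2^{-N})+\int_0^{2^{-N}}\mu(s)\,s^{-1}\,ds$ and substituting $s=1/\sigma$, this is exactly the admissibility condition $\int_s^{+\infty}\sigma^{-2}\Gamma_\mu(\sigma)\,d\sigma\le C\,s^{-1}\Gamma_\mu(s)$. Combining the two terms yields $|a(x+y)-a(x)|\le C\,|a|_{B_{\Gamma_\mu}}\,\mu(|y|)$, hence $\|a\|_{\mc{C}_\mu}\le C\,\|a\|_{B_{\Gamma_\mu}}$; together with the first inclusion this gives $\mc{C}_\mu=B_{\Gamma_\mu}$ with equivalent norms, and since then any $a\in\mc{C}_\mu$ also lies in $B_{\Gamma_\mu}$, the displayed estimate $\|\Delta_j a\|_{L^\infty}\le C\,\mu(2^{-j})$ follows from the pointwise bound above. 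Thus the main obstacle is not analytic depth but careful bookkeeping with the modulus — converting the two defining properties of admissibility into the dilation estimate and the tail summation used here.
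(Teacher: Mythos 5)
The paper itself gives no proof of this proposition: it is stated as a quotation of Proposition~2.111 of \cite{B-C-D}. Your argument is a correct and essentially complete reconstruction of the standard Littlewood--Paley proof of that result. The three steps — the estimate $\|\nabla S_j a\|_{L^\infty}\lesssim\Gamma_\mu(2^j)\|a\|_{\mc{C}_\mu}$ via the cancellation $\int\partial_k\check\chi\,dy=0$ together with the dilation bound $\mu(\lambda t)\le(1+\lambda)\mu(t)$; the pointwise block estimate $\|\Delta_j a\|_{L^\infty}\lesssim 2^{-j}\|\nabla S_{j+2}a\|_{L^\infty}$ via the factorisation $\varphi(\xi)=\sum_l\xi_l\theta_l(\xi)$; and the telescoping $a=S_N a+\sum_{k\ge N}\Delta_k a$ closed by the tail summation $\sum_{k\ge N}\mu(2^{-k})\lesssim\mu(2^{-N})$, which is exactly where the second (integral) admissibility condition enters — are all correctly executed and in the expected logical order. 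The only minor looseness is the case $j=-1$ of the displayed estimate (where $\mu(2^{-j})$ falls outside $[0,1]$); as you note this is absorbed into the constant using $\|\Delta_{-1}a\|_{L^\infty}\le\|a\|_{L^\infty}$ and $\mu(1)>0$, which is the standard convention.
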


Now we want to present a commutator lemma, which is fundamental in the proof of Proposition \ref{p:regular}, especially for property
\eqref{reg:source}.

First of all, let us recall the classical commutator estimates (see Lemma 2.97 of \cite{B-C-D}).
\begin{lemma} \label{l:commutator}
Let $\theta\in\mc{C}^1(\R^d)$ such that $\bigl(1+|\,\cdot\,|\bigr)\what{\theta}\,\in\,L^1$. There exists a constant $C$ such that,
for any Lipschitz function $\ell\in W^{1,\infty}(\R^d)$ and any $f\in L^p(\R^d)$ and for all $\lambda>0$, one has
$$
\left\|\bigl[\theta(\lambda^{-1}D),\ell\bigr]f\right\|_{L^p}\,\leq\,C\,\lambda^{-1}\,\left\|\nabla\ell\right\|_{L^\infty}\,\|f\|_{L^p}\,.
$$
\end{lemma}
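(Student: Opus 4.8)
The plan is to reduce the commutator $[\theta(\lambda^{-1}D),\ell]f$ to an integral operator with an explicit kernel coming from the Fourier multiplier $\theta(\lambda^{-1}D)$, and then to control that kernel using the hypothesis $(1+|\cdot|)\what\theta\in L^1$ together with the Lipschitz regularity of $\ell$. First I would write the operator $\theta(\lambda^{-1}D)$ as convolution with $h_\lambda(x):=\lambda^d h(\lambda x)$, where $h:=\mc{F}^{-1}\theta$; note that $\|h\|_{L^1}$ and $\| |\cdot| h\|_{L^1}$ are both finite by assumption, and these are the only two quantities about $\theta$ that will enter the estimate. Then, for any reasonable $f$,
\begin{equation*}
\bigl[\theta(\lambda^{-1}D),\ell\bigr]f(x)\;=\;\int_{\R^d} h_\lambda(x-y)\,\bigl(\ell(y)-\ell(x)\bigr)\,f(y)\,dy\,.
\end{equation*}

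Next I would exploit the mean value inequality: since $\ell\in W^{1,\infty}$, one has $|\ell(y)-\ell(x)|\le \|\nabla\ell\|_{L^\infty}\,|x-y|$ almost everywhere (this is the standard fact that a Lipschitz function with gradient in $L^\infty$ has Lipschitz constant $\|\nabla\ell\|_{L^\infty}$). Hence the kernel of the commutator is pointwise dominated by $\|\nabla\ell\|_{L^\infty}\,|x-y|\,|h_\lambda(x-y)|$. The function $x\mapsto |x|\,|h_\lambda(x)|$ has $L^1$ norm equal to $\lambda^{-1}\,\| |\cdot| h\|_{L^1}$ by the scaling $h_\lambda(x)=\lambda^d h(\lambda x)$. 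Therefore the commutator is, up to the constant $\|\nabla\ell\|_{L^\infty}$, bounded by convolution against an $L^1$ function of norm $C\lambda^{-1}$, and Young's inequality $\|g_1 * g_2\|_{L^p}\le \|g_1\|_{L^1}\|g_2\|_{L^p}$ gives
\begin{equation*}
\bigl\|\bigl[\theta(\lambda^{-1}D),\ell\bigr]f\bigr\|_{L^p}\;\le\;C\,\lambda^{-1}\,\|\nabla\ell\|_{L^\infty}\,\|f\|_{L^p}\,,
\end{equation*}
with $C$ depending only on $\| |\cdot|\,\what\theta\|_{L^1}$ (and, if one prefers, on $\|\what\theta\|_{L^1}$ as well), which is the claimed estimate.

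I do not expect a genuine obstacle here; the only points requiring a little care are purely technical. First, one should justify the kernel representation and the application of Fubini on a dense class (say $f\in\mc{S}$ or $f\in L^p\cap L^2$) and then extend by density, using that both sides are bounded operators on $L^p$. Second, the passage from $\ell\in W^{1,\infty}$ to the pointwise Lipschitz bound $|\ell(y)-\ell(x)|\le \|\nabla\ell\|_{L^\infty}|x-y|$ is standard but deserves a one-line mention (e.g. via mollification of $\ell$ and a limiting argument, or by integrating $\nabla\ell$ along the segment $[x,y]$). Finally, one must check that $(1+|\cdot|)\what\theta\in L^1$ indeed gives $h=\mc{F}^{-1}\theta\in L^1$ and $|\cdot|\,h\in L^1$: the former is immediate, and the latter follows because multiplication by $x_j$ on the physical side corresponds to a derivative of $\what\theta$ on the frequency side, which is controlled once one knows $\theta\in\mc{C}^1$ with the stated integrability — alternatively one simply takes $\| |\cdot|\,h\|_{L^1}<\infty$ as the working hypothesis, which is exactly what $(1+|\cdot|)\what\theta\in L^1$ is designed to encode after a Fourier transform. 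With these remarks the proof is complete.
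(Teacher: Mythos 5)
Your proof is correct and is essentially the same as the one the paper has in mind: the paper cites this result as Lemma~2.97 of \cite{B-C-D} and then, in the proof of Lemma~\ref{l:comm-less}, explicitly writes the same kernel representation $[\theta(\lambda^{-1}D),\ell]f = \lambda^d\int k(\lambda(x-y))f(y)(\ell(x)-\ell(y))\,dy$ followed by the Lipschitz bound on $\ell(x)-\ell(y)$ and Young's inequality. One small remark: your parenthetical attempt to deduce $|\cdot|\,h\in L^1$ from $\theta\in\mc{C}^1$ is not quite right (multiplication by $x_j$ on $h=\mc{F}^{-1}\theta$ corresponds to a derivative of $\theta$, not of $\what\theta$, and $\mc{C}^1$ regularity of $\theta$ does not by itself give $|\cdot|\what\theta\in L^1$), but this is immaterial since, as you say, the hypothesis $(1+|\cdot|)\what\theta\in L^1$ should simply be read as directly providing $\|h\|_{L^1}$ and $\|\,|\cdot|\,h\|_{L^1}$ finite, $h$ and $\what\theta$ being reflections of one another.
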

Going along the lines of the proof, it is easy to see that the constant $C$ depends just on the $L^1$ norm
of the function $|x|\,k(x)$, where $k\,=\,\mc{F}_\xi^{-1}\theta$ denotes the inverse Fourier transform of $\theta$.

Let us give a slight variation of the previous lemma. For simplicity, we restrict our attention to the case of $\theta$ in the Schwartz
class $\mc{S}(\R^d)$: this will be enough for our aims.
\begin{lemma} \label{l:comm_L^p}
Let $\theta\in\mc{S}(\R^d)$ and $(p_1,p_2,q)\in[1,+\infty]^3$ such that $1/q\,=\,1+1/p_2-1/p_1$.
Then there exists a constant $C$ such that,
for any $f\in L^{p_1}(\R^d)$, any $\ell\in W^{1,\infty}(\R^d)$ and all $\lambda>0$,
$$
\left\|\bigl[\theta(\lambda^{-1}D),\ell\bigr]f\right\|_{L^{p_2}}\,\leq\,C\,
\lambda^{-1}\,\left\|\nabla\ell\right\|_{L^\infty}\,\|f\|_{L^{p_1}}\,.
$$
The constant $C$ just depends on the $L^q$ norm of the function $|x|\,k(x)$, where $k\,=\,\mc{F}_\xi^{-1}\theta$.
\end{lemma}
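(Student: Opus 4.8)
The plan is to mimic, step by step, the proof of the classical commutator estimate of Lemma \ref{l:commutator}; the only new point is that in the final convolution estimate one invokes Young's inequality with three different exponents instead of the $L^1$-based one.

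First I would realize $\theta(\lambda^{-1}D)$ as a convolution operator. Writing $k:=\mc{F}_\xi^{-1}\theta\in\mc{S}(\R^d)$ and $k_\lambda(x):=\lambda^d\,k(\lambda x)$ for the kernel of $\theta(\lambda^{-1}D)$, one has $\theta(\lambda^{-1}D)g=k_\lambda*g$ for every tempered distribution $g$, hence the explicit representation
$$
\bigl[\theta(\lambda^{-1}D),\ell\bigr]f(x)\;=\;\theta(\lambda^{-1}D)(\ell f)(x)\,-\,\ell(x)\,\theta(\lambda^{-1}D)f(x)\;=\;\int_{\R^d}k_\lambda(x-y)\,\bigl(\ell(y)-\ell(x)\bigr)\,f(y)\,dy\,.
$$
Next, since $\ell\in W^{1,\infty}(\R^d)$ the mean value inequality gives $|\ell(y)-\ell(x)|\le\|\nabla\ell\|_{L^\infty}\,|x-y|$, so that pointwise
$$
\bigl|\bigl[\theta(\lambda^{-1}D),\ell\bigr]f(x)\bigr|\;\le\;\|\nabla\ell\|_{L^\infty}\,\bigl(\Phi_\lambda*|f|\bigr)(x)\,,\qquad\mbox{ where }\quad\Phi_\lambda(z):=|z|\,|k_\lambda(z)|\,.
$$

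Then I would apply Young's convolution inequality in the form $\|\Phi_\lambda*|f|\|_{L^{p_2}}\le\|\Phi_\lambda\|_{L^q}\,\|f\|_{L^{p_1}}$, which holds precisely under the balance relation $1+1/p_2=1/q+1/p_1$ assumed in the statement (with the usual limiting interpretations, such as $L^1*L^\infty\hra L^\infty$, when some exponent equals $1$ or $+\infty$). It remains to control $\|\Phi_\lambda\|_{L^q}$: the change of variables $y\mapsto\lambda y$ separates the dilation, extracting the factor $\lambda^{-1}$ coming from $|z|$ and leaving the fixed finite quantity $\bigl\||\cdot|\,k\bigr\|_{L^q}$ (finite because $\theta\in\mc{S}$, hence $k\in\mc{S}$). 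Collecting everything yields
$$
\bigl\|\bigl[\theta(\lambda^{-1}D),\ell\bigr]f\bigr\|_{L^{p_2}}\;\le\;C\,\lambda^{-1}\,\|\nabla\ell\|_{L^\infty}\,\|f\|_{L^{p_1}}
$$
with $C$ depending only on $\bigl\||\cdot|\,k\bigr\|_{L^q}$, as claimed; for $p_1=p_2$ (so $q=1$) this is exactly Lemma \ref{l:commutator}.

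No genuine obstacle is expected: every step is either an identity or a classical inequality. The only points deserving a little attention are (i) checking that the exponent triple $(p_1,p_2,q)$ is exactly the one Young's inequality requires and treating the endpoint cases $p_i\in\{1,+\infty\}$ via the degenerate forms of Young, and (ii) observing, as in Lemma \ref{l:commutator}, that only the $L^q$-integrability of $x\mapsto|x|\,k(x)$ is actually used, so that the Schwartz hypothesis on $\theta$ is made merely for convenience and could be relaxed to $\theta\in\mc{C}^1$ with $(1+|\cdot|)\,k\in L^q$.
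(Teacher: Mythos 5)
Your overall strategy is exactly the one the paper intends — it omits the proof precisely because, as you say, it should ``follow the classical statement'' (Lemma~\ref{l:commutator}): write the commutator as a convolution against $k_\lambda(x-y)\bigl(\ell(y)-\ell(x)\bigr)$, dominate $|\ell(y)-\ell(x)|$ by $\|\nabla\ell\|_{L^\infty}|x-y|$, and close via Young's inequality.

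However, there is a genuine computational gap in the last step, and it is not cosmetic. The kernel of $\theta(\lambda^{-1}D)$ is $k_\lambda(x)=\lambda^d\,k(\lambda x)$, which is normalized in $L^1$, not in $L^q$. Setting $\Phi_\lambda(z)=|z|\,|k_\lambda(z)|=|z|\,\lambda^d\,|k(\lambda z)|$ and substituting $w=\lambda z$, one finds
$$
\|\Phi_\lambda\|_{L^q}^q\,=\,\int\Bigl(\frac{|w|}{\lambda}\Bigr)^{\!q}\lambda^{dq}\,|k(w)|^q\,\lambda^{-d}\,dw\,=\,\lambda^{-q+d(q-1)}\,\bigl\||\cdot|\,k\bigr\|_{L^q}^q\,,
$$
so that $\|\Phi_\lambda\|_{L^q}=\lambda^{-1+d(1-1/q)}\,\bigl\||\cdot|\,k\bigr\|_{L^q}$. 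With the balance condition $1/q=1+1/p_2-1/p_1$, this means the Young step actually yields
$$
\bigl\|\bigl[\theta(\lambda^{-1}D),\ell\bigr]f\bigr\|_{L^{p_2}}\,\leq\,C\,\lambda^{-1}\,\lambda^{\,d\left(\frac{1}{p_1}-\frac{1}{p_2}\right)}\,\|\nabla\ell\|_{L^\infty}\,\|f\|_{L^{p_1}}\,,
$$
not the bound $C\,\lambda^{-1}\,\|\nabla\ell\|_{L^\infty}\,\|f\|_{L^{p_1}}$. The extra dimensional factor $\lambda^{\,d(1/p_1-1/p_2)}$ vanishes only when $p_1=p_2$ (i.e.\ $q=1$, the case of Lemma~\ref{l:commutator}), which is precisely why the classical proof gives the clean $\lambda^{-1}$. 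For $p_1<p_2$ this factor is there for the same reason a dimensional penalty $\lambda^{\,d(1/p-1/q)}$ appears in the Bernstein inequalities: a single object cannot be normalized in two incompatible $L^r$ scales simultaneously. So the sentence ``extracting the factor $\lambda^{-1}$ coming from $|z|$ and leaving the fixed finite quantity $\||\cdot|\,k\|_{L^q}$'' is where the argument breaks; the Jacobian of the dilation and the $\lambda^{dq}$ from $|k_\lambda|^q$ do not cancel unless $q=1$. You should either restrict the statement to $q=1$, or keep the factor $\lambda^{\,d(1/p_1-1/p_2)}$ explicit in the conclusion (and then re-examine whether it is harmless where the lemma is invoked, with $p_1=1$, $p_2=q=2$, $\lambda=2^M$).
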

The proof follows the arguments used for the classical statement, with no special novelties. Hence, we omit it.

Let us consider now less regular functions $\ell$.
\begin{lemma} \label{l:comm-less}
Let $\theta\in\mc{C}^1(\R^d)$ be as in Lemma \ref{l:commutator}, and let $\mu$ be an admissible modulus of continuity. Then, there exists
a constant $C$ such that, for any function $\ell\in\mc{C}_{\mu}(\R^d)$ and any $f\in L^p(\R^d)$ and for all $\lambda>1$, one has
$$
\left\|\bigl[\theta(\lambda^{-1}D),\ell\bigr]f\right\|_{L^p}\,\leq\,C\,\mu(\lambda^{-1})\,\left|\ell\right|_{\mc{C}_\mu}\,\|f\|_{L^p}\,.
$$
The constant $C$ only depends on the $L^1$ norms of the functions $k(x)$ and $|x|\,k(x)$.
\end{lemma}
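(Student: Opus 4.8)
The plan is to prove Lemma \ref{l:comm-less} by mimicking the classical commutator estimate (Lemma \ref{l:commutator}), replacing the Lipschitz bound on $\ell$ by the modulus-of-continuity bound. Write $k=\mc{F}_\xi^{-1}\theta$ and $k_\lambda(x):=\lambda^d k(\lambda x)$, so that $\theta(\lambda^{-1}D)g=k_\lambda*g$. Then for $f\in L^p(\R^d)$ one has the pointwise identity
$$
\bigl[\theta(\lambda^{-1}D),\ell\bigr]f(x)\,=\,\int_{\R^d}k_\lambda(x-y)\,\bigl(\ell(y)-\ell(x)\bigr)\,f(y)\,dy\,.
$$
First I would split this integral according to whether $|x-y|\leq1$ or $|x-y|>1$. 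On the near region $|x-y|\leq1$ we use $|\ell(y)-\ell(x)|\leq|\ell|_{\mc{C}_\mu}\,\mu(|x-y|)$, while on the far region $|x-y|>1$ we simply bound $|\ell(y)-\ell(x)|\leq2\|\ell\|_{L^\infty}$; however, since the statement only involves $|\ell|_{\mc{C}_\mu}$ on the right-hand side, the cleaner route is to observe that $\mu$ is non-decreasing, so on $|x-y|>1$ we can still write $|\ell(y)-\ell(x)|\le C\,|\ell|_{\mc{C}_\mu}\,\mu(1)$ after noting $\ell$ is bounded by its $\mc{C}_\mu$-seminorm up to additive constants that drop out of the commutator — alternatively one absorbs the far part using rapid decay of $k$ and the admissibility of $\mu$. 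In either case, by Young's convolution inequality it suffices to control the $L^1$ norm of the kernel $x\mapsto |k_\lambda(x)|\,\min\{\mu(|x|),\mu(1)\}$, or more simply $x\mapsto|k_\lambda(x)|\,\mu(\min\{|x|,1\})$.

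The key computation is then a change of variables:
$$
\int_{\R^d}|k_\lambda(x)|\,\mu\bigl(\min\{|x|,1\}\bigr)\,dx\,=\,\int_{\R^d}|k(z)|\,\mu\bigl(\min\{\lambda^{-1}|z|,1\}\bigr)\,dz\,.
$$
I would split this last integral over $\{|z|\leq\lambda\}$ and $\{|z|>\lambda\}$. On $\{|z|\leq\lambda\}$, monotonicity of $\mu$ combined with the subadditivity-type estimate for admissible moduli — namely that $\mu(\lambda^{-1}|z|)\leq\Gamma_\mu(|z|)\,\mu(\lambda^{-1})$ for $|z|\geq1$ (which follows from $\Gamma_\mu(s)=s\mu(1/s)$ being non-decreasing, exactly as in Section 2.11 of \cite{B-C-D}), together with $\lambda>1$, yields the bound $\mu(\lambda^{-1})$ times $\int|k(z)|\bigl(1+\Gamma_\mu(|z|)\bigr)\,dz$. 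On $\{|z|>\lambda\}$ one uses $\mu(\min\{\lambda^{-1}|z|,1\})\leq\mu(1)$ and the rapid decay of $k$ (since $\theta\in\mc{C}^1$ with $(1+|\cdot|)\what\theta\in L^1$, hence $|x|\,k(x)\in L^1$, so $\int_{|z|>\lambda}|k(z)|\,dz\leq\lambda^{-1}\||x|k\|_{L^1}\leq\mu(\lambda^{-1})\,\mu(1)^{-1}\||x|k\|_{L^1}$ after comparing $\lambda^{-1}$ with $\mu(\lambda^{-1})$ via admissibility). Collecting the two pieces gives the kernel $L^1$-bound $\leq C\,\mu(\lambda^{-1})$, with $C$ depending only on $\|k\|_{L^1}$, $\||x|\,k\|_{L^1}$, and the admissibility constants of $\mu$.

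Combining with Young's inequality finishes the proof: $\|[\theta(\lambda^{-1}D),\ell]f\|_{L^p}\leq C\,\mu(\lambda^{-1})\,|\ell|_{\mc{C}_\mu}\,\|f\|_{L^p}$. The main obstacle I anticipate is the bookkeeping in the region $\{|z|>\lambda\}$ and the careful use of the admissibility condition $\int_s^{+\infty}\sigma^{-2}\Gamma_\mu(\sigma)\,d\sigma\leq C\,s^{-1}\Gamma_\mu(s)$ to control the quantity $\int|k(z)|\,\Gamma_\mu(|z|)\,dz$ — one must verify that the decay of $k$ beats the growth of $\Gamma_\mu$, which is where the hypothesis $(1+|\cdot|)\what\theta\in L^1$ (or, for the Schwartz case, faster decay) is genuinely used; for a general admissible $\mu$ one may need slightly more decay on $k$ than $|x|k\in L^1$, but since $\Gamma_\mu(s)=o(s)$ is guaranteed by admissibility, the integral $\int|k(z)|\Gamma_\mu(|z|)\,dz$ converges as soon as $|x|\,k\in L^1$, so the stated hypotheses suffice. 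Everything else is a routine adaptation of the classical argument.
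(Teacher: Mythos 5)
Your overall strategy is exactly the paper's: represent the commutator as integration against the kernel $\lambda^d k(\lambda(x-y))\bigl(\ell(x)-\ell(y)\bigr)$, extract the seminorm $|\ell|_{\mc{C}_\mu}$, and reduce to bounding the $L^1$ norm of the kernel weighted by $\mu$, splitting at the scale $|x-y|\sim\lambda^{-1}$ (i.e.\ $|z|\sim1$ after rescaling) and using the monotonicity of $\mu$ on the inner region and of $\Gamma_\mu$ on the outer one together with $\||x|\,k\|_{L^1}<\infty$. Two of your intermediate inequalities, however, are misstated. First, the claimed bound $\mu(\lambda^{-1}|z|)\leq\Gamma_\mu(|z|)\,\mu(\lambda^{-1})$ for $1\leq|z|\leq\lambda$ is false: for the admissible modulus $\mu(t)=t$ one has $\Gamma_\mu\equiv1$, and the claim reads $\lambda^{-1}|z|\leq\lambda^{-1}$, which fails for $|z|>1$. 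What monotonicity of $\Gamma_\mu$ actually gives (taking $s_1=\lambda/|z|\leq s_2=\lambda$) is
$$
\mu(\lambda^{-1}|z|)\,=\,\lambda^{-1}|z|\;\Gamma_\mu\bigl(\lambda/|z|\bigr)\,\leq\,\lambda^{-1}|z|\;\Gamma_\mu(\lambda)\,=\,|z|\,\mu(\lambda^{-1})\,,
$$
which is weaker than your claim but entirely sufficient, since it produces $\mu(\lambda^{-1})\,\||x|\,k\|_{L^1}$ — this is precisely the estimate used in the paper. Second, on the far region $|x-y|>1$ the increment $|\ell(x)-\ell(y)|$ is \emph{not} bounded by $C\,|\ell|_{\mc{C}_\mu}\,\mu(1)$ (the seminorm only controls increments over distances $\leq1$; a function with small seminorm can oscillate by an arbitrarily large amount over large distances, and subtracting a single constant cannot fix this). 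The correct route is the one you mention in passing: chaining gives $|\ell(x)-\ell(y)|\leq C\,|x-y|\,|\ell|_{\mc{C}_\mu}\,\mu(1)$ for $|x-y|\geq1$, and the resulting kernel weight $|x-y|$ is absorbed by $\||x|\,k\|_{L^1}$ after rescaling, using also $\lambda^{-1}\leq\mu(\lambda^{-1})/\mu(1)$ (which does follow from $\Gamma_\mu$ being non-decreasing, as you note). With these two corrections your argument closes and coincides with the paper's; your final worry about needing extra decay of $k$ is unfounded, since $\Gamma_\mu(s)\leq s\,\mu(1)$ always.
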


\begin{proof}
As in the proof of the classical result (see Lemma \ref{l:commutator}), we can write
$$
\bigl[\theta(\lambda^{-1}D),\ell\bigr]f\,=\,\lambda^d\int_{\R^d}k\bigl(\lambda(x-y)\bigr)\,f(y)\,\bigl(\ell(x)\,-\,\ell(y)\bigr)\,dy\,.
$$

Remark that the previous integral is actually taken over $\R^d\setminus\{x\}$, so that we can 
multiply and divide by $\mu(|x-y|)$. Making the seminorm $|\ell|_{\mc{C}_\mu}$ appear, thanks to Young inequality
we are reduced to estimate the quantity
$$
\lambda^d\,\left\|k(\lambda\,\cdot\,)\,\mu(\,|\cdot|\,)\right\|_{L^1}\,=\,
\lambda^d\,\int_{\R^d}|k|(\lambda\,z)\,\mu(|z|)\,dz\,.
$$
Let us split the previous integral according to the decomposition $\R^d\,=\,\left\{|z|\leq\lambda^{-1}\right\}\,\cup\,
\left\{|z|\geq\lambda^{-1}\right\}$.
For the former term, since $\mu$ is increasing we have
$$
\lambda^d\,\int_{|z|\leq \lambda^{-1}}|k|(\lambda\,z)\,\mu(|z|)\,dz\,\leq\,\mu(\lambda^{-1})\,\|k\|_{L^1}\,.
$$
For the latter term, instead, we make the non-decreasing function $\Gamma_\mu$ appear, and we estimate
\begin{eqnarray*}
\lambda^d\,\int_{|z|\geq \lambda^{-1}}|k|(\lambda\,z)\,\mu(|z|)\,dz & = &
\lambda^d\,\int_{|z|\geq \lambda^{-1}}|k|(\lambda\,z)\,\Gamma_\mu(|z|^{-1})\,|z|\,dz \\
& \leq & C\,\Gamma_\mu(\lambda)\,\lambda^{-1}\,\left\|\;|\cdot|\;k(\,\cdot\,)\right\|_{L^1}\;\leq\;C\,\mu(\lambda^{-1})\,.
\end{eqnarray*}

The lemma is hence proved.
\end{proof}

Obviously, an extension of the previous result, in the same spirit of Lemma \ref{l:comm_L^p}, holds true.

\subsection{On the BD entropy structure} \label{app:BD}

We give here the details of the proofs of some technical lemmas about BD entropy estimates. We start by proving Lemma \ref{l:F}.

\begin{proof}[Proof of Lemma \ref{l:F}]
First of all, by Lemma 2 of \cite{B-D-L} we can write
\begin{equation} \label{eq:lemma2}
\frac{1}{2}\,\frac{d}{dt}\int_\Omega\rho\,\left|\nabla\log\rho\right|^2\,+\,\int_\Omega\nabla\div u\cdot
\nabla\rho\,+\,\int_\Omega\rho\,Du:\nabla\log\rho\otimes\nabla\log\rho\,=\,0\,.
\end{equation}

Next, we multiply the momentum equation by $\nu\,\nabla\rho/\rho$ and we integrate over $\Omega$: we find
\begin{eqnarray}
& & \hspace{-1cm}
\nu\!\!\int_\Omega\left(\d_tu+u\cdot\nabla u\right)\cdot\nabla\rho\,+\,
\nu^2\int_\Omega Du:\left(\nabla^2\rho-\frac{1}{\rho}\nabla\rho\otimes\nabla\rho\right)\,+ \label{eq:mom_BD} \\
& & \qquad
+\,\frac{\nu}{\rm Ro}\int_\Omega e^3\times u\cdot\nabla\rho\,+\,
\frac{\nu}{\rm We}\int_\Omega\left|\nabla^2\rho\right|^2\,+\,
\frac{4\,\nu}{{\rm Fr}^2}\int_\Omega\bigl(P'(\rho)+P'_c\bigr)\left|\nabla\sqrt{\rho}\right|^2\,=\,0\,. \nonumber
\end{eqnarray}
Now we add \eqref{eq:lemma2}, multiplied by $\nu^2$, to \eqref{eq:mom_BD}: we end up with
\begin{eqnarray*}
& & \hspace{-1cm}
\frac{\nu^2}{2}\,\frac{d}{dt}\!\int_\Omega\rho\left|\nabla\log\rho\right|^2\,+\,
\frac{\nu}{\rm We}\!\int_\Omega\left|\nabla^2\rho\right|^2\,+\,
\frac{4\nu}{{\rm Fr}^2}\!\int_\Omega\bigl(P'+P'_c\bigr)\left|\nabla\sqrt{\rho}\right|^2\,+\,
\frac{\nu}{\rm Ro}\!\int_\Omega e^3\times u\cdot\nabla\rho\,= \\
& & \qquad\qquad =\,-\,\nu\int_\Omega\d_tu\cdot\rho\,-\,\nu^2\int_\Omega\nabla\div u\cdot\nabla\rho\,-\,
\nu\int_\Omega\left(u\cdot\nabla u\right)\cdot\nabla\rho\,-\,\nu^2\int_\Omega Du:\nabla^2\rho\,.
\end{eqnarray*}
From this relation, thanks to the mass equation and the identities
\begin{eqnarray*}
-\,\int_\Omega u\cdot\nabla\div(\rho u)\,-\,\int_\Omega\left(u\cdot\nabla u\right)\cdot\nabla\rho & = & 
\int_\Omega\rho\nabla u:\,^t\nabla u \\
-\,\int_\Omega\nabla\div u\cdot\nabla\rho\,-\,\int_\Omega Du:\nabla^2\rho & = & 0\,,
\end{eqnarray*}
we deduce the equality
\begin{eqnarray*}
& & \hspace{-1.5cm}\frac{d}{dt}F_\veps\,+\,
\frac{\nu}{\rm We}\int_\Omega\left|\nabla^2\rho\right|^2\,dx\,+\,
\frac{4\nu}{{\rm Fr}^2}\int_\Omega\bigl(P'(\rho)\,+\,P_c'(\rho)\bigr)\,\left|\nabla\sqrt{\rho}\right|^2\,dx\,+ \\
& & \qquad\qquad +\,\frac{\nu}{\rm Ro}\int_\Omega e^3\times u\cdot\nabla\rho\,dx\,=\,
-\,\nu\,\frac{d}{dt}\int_\Omega u\cdot\nabla\rho\,dx\,+\,\nu\int_\Omega\rho\,\nabla u\,:\,^t\nabla u\,dx\,.
\end{eqnarray*}
Notice that we can rewrite this last relation in the following way:
\begin{eqnarray*}
& & \hspace{-0.5cm}
\frac{1}{2}\,\frac{d}{dt}\int_\Omega\rho\,\left|u\,+\,\nu\,\nabla\log\rho\right|^2\,dx\,+\,
\frac{\nu}{\rm We}\int_\Omega\left|\nabla^2\rho\right|^2\,dx\,+\,\frac{\nu}{\rm Ro}\int_\Omega e^3\times u\cdot\nabla\rho\,dx\,+ \\
& & \qquad\qquad +\,\frac{4\nu}{{\rm Fr}^2}\int_\Omega\bigl(P'(\rho)\,+\,P_c'(\rho)\bigr)\,\left|\nabla\sqrt{\rho}\right|^2\,dx\,=\,
\frac{1}{2}\,\frac{d}{dt}\int_\Omega \rho\,|u|^2\,dx\,+\,\nu\int_\Omega\rho\,\nabla u\,:\,^t\nabla u\,dx\,.
\end{eqnarray*}

Then, we conclude by integrating with respect to time and using Proposition \ref{p:E}.
\end{proof}

Now, let us switch our attention to the proof of Lemma \ref{l:rho_BD}.

\begin{proof}[Proof of Lemma \ref{l:rho_BD}]
By Lemma \ref{l:density}, item $(i)$, we easily deduce
$$
\|\rho-1\|_{L^\infty_t(L^2)}\,\leq\,C\left(\|\rho-1\|_{L^\infty_t(L^\g)}\,+\,
\bigl(1-\mathds{1}_2(\g)\bigr)\,\left\|\nabla\rho\right\|_{L^\infty_t(L^2)}\right)\,,
$$
and the first estimate immediately follows from Proposition \ref{p:E} and Corollary \ref{c:E}.

Let us now focus on the second estimate. By Lemma \ref{l:density}, item $(ii)$, for any $0<\delta\leq1/2$ and
any $1\leq p<+\infty$ we can write
\begin{equation} \label{est:rho_BD}
\|\rho-1\|^p_{L^p_t(L^\infty)}\,\leq\,C_p\left(\|\rho-1\|_{L^\infty_t(L^2)}\,t\,+\,
\int^t_0\left\|\nabla\rho\right\|^{p((1/2)-\delta)}_{L^2}\,\left\|\nabla^2\rho\right\|^{p((1/2)+\delta)}_{L^2}\,d\tau\right)\,.
\end{equation}
Now, since $p\leq4/(1+2\delta)$, we can apply H\"older inequality to make the $L^2_t\bigl(L^2\bigr)$ norm of $\nabla^2\rho$ appear.
More precisely, using again the bounds of Proposition \ref{p:E} and Corollary \ref{c:E}, we have
\begin{eqnarray*}
\int^t_0\left\|\nabla\rho\right\|^{p((1/2)-\delta)}_{L^2}\,\left\|\nabla^2\rho\right\|^{p((1/2)+\delta)}_{L^2}\,d\tau & \leq &
\left\|\nabla\rho\right\|^{p((1/2)-\delta)}_{L^\infty_t(L^2)}\,
\int^t_0\left\|\nabla^2\rho\right\|^{p((1/2)+\delta)}_{L^2}\,d\tau \\
 & \leq & 
({\rm We})^{p((1/2)-\delta)}\;t^{1-1/q}\,\left(\left\|\nabla^2\rho\right\|^2_{L^2_t(L^2)}\right)^{1/q}\,,
\end{eqnarray*}
where we have set $q\,:=\,4/\bigl((1+2\delta)p\bigr)$.
Putting this inequality into \eqref{est:rho_BD} and using also the bound of the first part, we finally get the desired estimate.
\end{proof}

Let us conclude this section by showing the proof of Lemma \ref{l:rot}.
Notice that we are not able to exploit the presence of the cold pressure term at this level.

\begin{proof}[Proof of Lemma \ref{l:rot}]
The first inequality is trivial: we have just to write $e^3\times u\cdot\nabla\rho\,=\,
e^3\times\left(\sqrt{\rho}u\right)\cdot\nabla\sqrt{\rho}$, and then apply H\"older inequality to the integral over $\Omega$ and
Proposition \ref{p:E}.

Let us then focus on the estimate in $(ii)$. First of all, we write
\begin{equation} \label{eq:rot_parts}
\int^t_0\!\!\int_\Omega\mf{c}\,e^3\times u\cdot\nabla\rho\,=\,
\int^t_0\!\!\int_\Omega\mf{c}\,e^3\times\sqrt{\rho}u\cdot\nabla\rho\,+\,
\int^t_0\!\!\int_\Omega\mf{c}\,e^3\times\sqrt{\rho}u\cdot\nabla\rho\left(\frac{1}{\sqrt{\rho}}-1\right)\,.
\end{equation}
Now we perform an integration by parts in the latter term: denoting by $\omega=\nabla\times u$ the vorticity of the fluid,
we get
\begin{eqnarray*}
& & \hspace{-1cm} \int^t_0\!\!\int_\Omega\mf{c}\,e^3\times\sqrt{\rho}u\cdot\nabla\rho\left(\frac{1}{\sqrt{\rho}}-1\right)\;=\;
\int^t_0\!\!\int_\Omega\mf{c}\,\rho\omega^3\left(\sqrt{\rho}-1\right)\,+ \\
& & +\,\int^t_0\!\!\int_\Omega\rho u\cdot\nabla^\perp_h\mf{c}\left(\sqrt{\rho}-1\right)\,-\,
\int^t_0\!\!\int_\Omega\frac{\mf{c}}{2}\,e^3\times\sqrt{\rho}u\cdot\nabla\rho\left(\frac{1}{\sqrt{\rho}}-1\right)\,+\,
\int^t_0\!\!\int_\Omega\frac{\mf{c}}{2}\,e^3\times u\cdot\nabla\rho\,,
\end{eqnarray*}
which in turn implies
\begin{eqnarray*}
\int^t_0\!\!\int_\Omega\mf{c}\,e^3\times\sqrt{\rho}u\cdot\nabla\rho\left(\frac{1}{\sqrt{\rho}}-1\right) & = & 
\frac{2}{3}\int^t_0\!\!\int_\Omega\mf{c}\,\rho\omega^3\,\left(\sqrt{\rho}-1\right)\,+ \\
& & +\,\frac{1}{3}\int^t_0\!\!\int_\Omega\mf{c}\,e^3\times u\cdot\nabla\rho\,+\,
\frac{2}{3}\int^t_0\!\!\int_\Omega\rho u\cdot\nabla^\perp_h\mf{c}\left(\sqrt{\rho}-1\right)\,.
\end{eqnarray*}
Combining now \eqref{eq:rot_parts} with this last relation gives us
\begin{eqnarray}
\frac{\nu}{\rm Ro}\!\!\int^t_0\!\!\int_\Omega\mf{c}\,e^3\times u\cdot\nabla\rho & = & 
\frac{\nu}{\rm Ro}\int^t_0\!\!\int_\Omega\mf{c}\,\rho\omega^3\left(\sqrt{\rho}-1\right)\,+ \label{eq:rot_control} \\
& & +\,\frac{3\,\nu}{2\,{\rm Ro}}\int^t_0\!\!\int_\Omega\mf{c}\,e^3\times\sqrt{\rho}u\cdot\nabla\rho\,+\,
\frac{\nu}{\rm Ro}\int^t_0\!\!\int_\Omega\rho u\cdot\nabla^\perp_h\mf{c}\left(\sqrt{\rho}-1\right)\,. \nonumber
\end{eqnarray}

Let us start by considering the second term on the right-hand side: on the one hand, by use of H\"older inequality and
Proposition \ref{p:E}, one gets
\begin{equation}
\frac{\nu}{\rm Ro}\left|\int^t_0\!\!\int_\Omega\mf{c}\,e^3\times\sqrt{\rho}u\cdot\nabla\rho\right|\,\leq\,
\frac{C\,\nu}{\rm Ro}\int^t_0\left\|\sqrt{\rho}\,u\right\|_{L^2}\,\left\|\nabla\rho\right\|_{L^2}\,d\tau\,\leq\,
C\,\nu\,t\,\frac{\sqrt{{\rm We}}}{{\rm Ro}}\,. \label{est:rot2_g}
\end{equation}
On the other hand, if $\g=2$, by use of Young inequality we can also write
\begin{eqnarray}
\frac{\nu}{\rm Ro}\left|\int^t_0\!\!\int_\Omega\mf{c}\,e^3\times\sqrt{\rho}u\cdot\nabla\rho\right| & \leq &
\frac{C\,\nu}{\rm Ro}\int^t_0\left\|\nabla\rho\right\|_{L^2}\,d\tau\,\leq\,\frac{C\,\nu\,\sqrt{t}}{\rm Ro}
\left(\int^t_0\left\|\nabla\rho\right\|^2_{L^2}\,d\tau\right)^{\!\!1/2} \label{est:rot2_2} \\
& \leq & C\,\nu\,t\left(\frac{{\rm Fr}}{{\rm Ro}}\right)^{\!2}\,+\,
\frac{1}{2}\,\frac{\nu}{{\rm Fr}^2}\,\left\|\nabla\rho\right\|^2_{L^2_t(L^2)}\,. \nonumber
\end{eqnarray}

For the vorticity term, we start by observing that $|\sqrt{\rho}-1|\leq|\rho-1|$: therefore,
thanks also to Corollary \ref{c:E} and Lemma \ref{l:rho_BD}, we can write the estimate
\begin{eqnarray*}
\frac{\nu}{\rm Ro}\left|\int^t_0\!\!\int_\Omega\mf{c}\,\rho\omega^3\left(\sqrt{\rho}-1\right)\right| & \leq & 
\frac{\nu}{\rm Ro}\left\|\sqrt{\rho}\,Du\right\|_{L^2_t(L^2)}\,\left\|\sqrt{\rho}-1\right\|_{L^\infty_t(L^2)}\,
\left\|\sqrt{\rho}\right\|_{L^2_t(L^\infty)} \\ 
& \leq & \frac{C\,\nu}{\rm Ro}\,\zeta\,
\left\|\sqrt{\rho}\right\|_{L^2_t(L^\infty)}\,,
\end{eqnarray*}
where, for notation convenience, we have set
$$
\zeta\,=\,\zeta({\rm Fr},{\rm We})\,:=\,\left({\rm Fr}\,+\,\bigl(1-\mathds{1}_2(\g)\bigr)\,\sqrt{{\rm We}}\right)\,.
$$
In order to control the last factor in the right-hand side, we take advantage as usual of the decomposition
$\sqrt{\rho}=1+(\sqrt{\rho}-1)$;  then, applying Lemma \ref{l:rho_BD} with e.g. $\delta=1/4$ and $p=2$ (and so
$q=4/3$) implies
\begin{eqnarray*}
\left\|\sqrt{\rho}\right\|_{L^2_t(L^\infty)} & \leq & C\biggl(\bigl(1+\zeta\bigr)t\,+\,
({\rm We})^{3/4}\;\nu^{-3/4}\;t^{1/4}\left(\frac{\nu}{\rm We}\,\left\|\nabla^2\rho\right\|^2_{L^2_t(L^2)}\right)^{3/4}\biggr)^{1/2} \\
& \leq & C\biggl(\bigl(1+\zeta\bigr)^{\!1/2}\,\sqrt{t}\,+\,
({\rm We})^{3/8}\;\nu^{-3/8}\;t^{1/8}\left(\frac{\nu}{\rm We}\,\left\|\nabla^2\rho\right\|^2_{L^2_t(L^2)}\right)^{3/8}\biggr)\,.
\end{eqnarray*}
Inserting this inequality in the estimate for the vorticity term gives
\begin{eqnarray*}
\frac{\nu}{\rm Ro}\left|\int^t_0\!\!\int_\Omega\mf{c}\,\rho\omega^3\left(\sqrt{\rho}-1\right)\,dx\,d\tau\right| & \leq & 
\frac{C\,\nu}{\rm Ro}\,\zeta\,\bigl(1+\zeta\bigr)^{1/2}\,\sqrt{t}\,+ \\
& & \qquad +\,\frac{C\,\nu^{5/8}\,({\rm We})^{3/8}\,t^{1/8}}{\rm Ro}\,\zeta\,
\left(\frac{\nu}{\rm We}\,\left\|\nabla^2\rho\right\|^2_{L^2_t(L^2)}\right)^{3/8}\,,
\end{eqnarray*}
and, by application of Young inequality, in the end we find
\begin{eqnarray}
\frac{\nu}{\rm Ro}\left|\int^t_0\!\!\int_\Omega\mf{c}\,\rho\omega^3\left(\sqrt{\rho}-1\right)\,dx\,d\tau\right| & \leq & 
\frac{C\,\nu\,\sqrt{t}}{\rm Ro}\,\zeta\,\bigl(1+\zeta\bigr)^{1/2}\,+ \label{est:rot1_g2} \\
& & \quad +\,\frac{C\,\nu\,({\rm We})^{3/5}\,\zeta^{8/5}}{({\rm Ro})^{8/5}}\,t^{1/5}\,+\,
\frac{3}{8}\,\frac{\nu}{{\rm We}}\,\left\|\nabla^2\rho\right\|^2_{L^2_t(L^2)}\,. \nonumber
\end{eqnarray}

Finally, for the term involving $\nabla_h\mf{c}$ in \eqref{eq:rot_control} we can argue in a very similar way. Thanks to the bounds of
Corollary \ref{c:E} and Lemma \ref{l:rho_BD}, where this time we take $\delta=1/4$ and $p=1$ (and then $q=8/3$), we infer
\begin{eqnarray*}
& & \hspace{-1cm}
\frac{\nu}{\rm Ro}\left|\int^t_0\!\!\int_\Omega\rho u\cdot\nabla^\perp_h\mf{c}\left(\sqrt{\rho}-1\right)\right|\;\leq\;
\frac{C\,\nu}{\rm Ro}\,\left\|\sqrt{\rho}\,u\right\|_{L^\infty_t(L^2)}\,\left\|\sqrt{\rho}-1\right\|_{L^\infty_t(L^2)}\,
\left\|\sqrt{\rho}\right\|_{L^1_t(L^\infty)} \\
& & \qquad\qquad\qquad\qquad
\;\leq\;\frac{C\,\nu}{\rm Ro}\,\zeta\,\biggl(\bigl(1+\zeta\bigr)\,t\,+\,({\rm We})^{3/8}\,\nu^{-3/8}\,t^{5/8}\,
\left(\frac{\nu}{\rm We}\,\left\|\nabla^2\rho\right\|^2_{L^2_t(L^2)}\right)^{3/8}\biggr)\,.
\end{eqnarray*}
Hence, by use of Young inequality as before, it follows the control
\begin{eqnarray}
\frac{\nu}{\rm Ro}\left|\int^t_0\!\!\int_\Omega\rho u\cdot\nabla^\perp_h\mf{c}\left(\sqrt{\rho}-1\right)\right| & \leq & 
\frac{C\,\nu\,t}{\rm Ro}\,\zeta\,\bigl(1+\zeta\bigr)\,+ \label{est:rot3_g2} \\
& & \qquad +\,\frac{C\,\nu\,({\rm We})^{3/5}\,\zeta^{8/5}}{({\rm Ro})^{8/5}}\,t\,+\,
\frac{3}{8}\,\frac{\nu}{{\rm We}}\,\left\|\nabla^2\rho\right\|^2_{L^2_t(L^2)}\,. \nonumber
\end{eqnarray}

Now, we recall equality \eqref{eq:rot_control}: keeping in mind the definition of $\zeta$,
combining \eqref{est:rot1_g2} and \eqref{est:rot3_g2} with \eqref{est:rot2_g}
gives us the bound for the general case $1<\g\leq2$; the inequality in the special case $\g=2$ follows using \eqref{est:rot2_2}
instead of \eqref{est:rot2_g}.
\end{proof}

\begin{rem} \label{r:rot_parts}
The rotation term in \cite{F} was dealt with in a slightly different way, exploiting the special law of the classical component of
the pressure.
In the end, one can obtain an analogous inequality to the one given here.
\end{rem}


{\small

}

\end{document}